\newtheorem{theo}{Theorem}[section]
\newtheorem{prop}[theo]{Proposition}
\newtheorem{coro}[theo]{Corollary}
\newtheorem{lemme}[theo]{Lemma}
\newtheorem{rema}[theo]{Remark}
\newcounter{defcounter}
\newenvironment{myequation}{%
\addtocounter{equation}{-1}
\refstepcounter{defcounter}

\begin{equation}}
{\end{equation}}
\newcommand{\R}{\mathbb{R}}
\newcommand{\N}{\mathbb{N}}
\newcommand{\Rd}{\mathbb{R}^d}
\newcommand{\dx}{\textrm{d}}
\newcommand{\teps}{t_{\varepsilon}}
\newcommand{\seps}{s_{\varepsilon}}
\newcommand{\neps}{n_{\varepsilon}}
\newcommand{\nepsk}{n_{\varepsilon_k}}
\newcommand{\Ieps}{I_{\varepsilon}}
\newcommand{\Iepsk}{I_{\varepsilon_k}}
\newcommand{\Iepsun}{I_{\varepsilon,1}}
\newcommand{\Iepsdeux}{I_{\varepsilon,2}}
\newcommand{\Iepstrois}{I_{\varepsilon,3}}
\newcommand{\Jeps}{J_{\varepsilon}}
\newcommand{\eps}{\varepsilon}
\newcommand{\ueps}{u_{\varepsilon}}
\newcommand{\veps}{v_{\varepsilon}}
\newcommand{\phieps}{\Phi_{\varepsilon}}
\newcommand{\rhoeps}{\rho_{\varepsilon}}
\newcommand{\intRd}{\int_{\Rd}}
\title{Survival criterion for a population subject to selection and mutations ;\\
Application to temporally piecewise constant environments}
\author{Manon Costa \thanks{Institut de Math\'{e}matiques de Toulouse ; UMR 5219, Universit\'{e} de Toulouse ; CNRS, UPS IMT, F-31062 Toulouse Cedex 9, France ({\tt manon.costa@math.univ-toulouse.fr})}, Christ\`{e}le Etchegaray \thanks{Team MONC, INRIA Bordeaux-Sud-Ouest, Institut de Math\'{e}matiques de Bordeaux, CNRS UMR 5251 $\&$ Universit\'{e} de
Bordeaux, 351 cours de la Lib\'{e}ration, 33405 Talence Cedex, France ({\tt christele.etchegaray@inria.fr})}, Sepideh Mirrahimi\thanks{Institut de Math\'{e}matiques de Toulouse ; UMR 5219, Universit\'{e} de Toulouse ; CNRS, UPS IMT, F-31062 Toulouse Cedex 9, France ({\tt Sepideh.Mirrahimi@math.univ-toulouse.fr})}}
\begin{document}

\maketitle
\selectlanguage{english}
\begin{abstract}
We study a parabolic Lotka-Volterra type equation that describes the evolution of a population structured by a phenotypic trait, under the effects of mutations and competition for resources modelled by a nonlocal feedback. The limit of small mutations is characterized by a Hamilton-Jacobi equation with constraint that describes the concentration of the population on some traits. This result was already established in \cite{Perthame2008},\cite{Barles2009},\cite{Lorz2011} in a time-homogenous environment, when the asymptotic persistence of the population was ensured by assumptions on either the growth rate or the initial data. Here, we relax these assumptions to extend the study to situations where the population may go extinct at the limit. For that purpose, we provide conditions on the initial data for the asymptotic fate of the population. Finally, we show how this study for a time-homogenous environment allows to consider temporally piecewise constant environments. 
\end{abstract}

\textbf{Keywords:} Parabolic integro-differential equations; Hamilton-Jacobi equation with constraint; Dirac concentrations; Adaptive evolution.

\textbf{MSC 2010 classification:} 35K55, 35B40, 35D40, 35R09, 92D15.


\section{Introduction}
\subsection{Model and motivations}

In this paper, we study the asymptotic behaviour of solutions to parabolic Lotka-Volterra type equations used to model the evolutionary dynamics of a population where individuals are characterized by a phenotypic trait $x\in \Rd$. The population density $(t,x)\mapsto n(t,x)$ satisfies the integro-differential problem
\begin{equation}\label{eq:selection_mutation}
\begin{cases}
\partial_t n(t,x) - \sigma \Delta n(t,x) = n(t,x) R(x,I(t)), \, x \in \R^d, \, t> 0 , \\
I(t) = \intRd \psi(x) n(t,x) \dx x\,,\\
n (0,x) =n^0 \in L^1(\R^d),\, n^0\geq 0\,.
\end{cases}
\end{equation}
The population follows a selection-mutation dynamics, describing the interplay between ecology and evolution via the competition for resources. 
The term $R(x,I)$ models the growth rate of individuals with trait $x$ depending on the nonlocal interaction term $I(t)$. This interaction term $I$ represents the total consumption of a resource, with $\psi(x)$ being the trait-dependent consumption rate. The growth rate $R$ is then naturally assumed decreasing in $I$. Mutations are described by the Laplace term and arise with rate $\sigma$. Such macroscopic selection-mutation models can in fact be obtained from stochastic individual-based population models in a large population limit (see \cite{fournier2004},\cite{CFM08} and subsequent works).

The qualitative behavior of integro-differential selection-mutation models have been widely studied (see for instance \cite{AC.SC:04},\cite{DIEKMANN2005257},\cite{LD.PJ.SM.GR:08},\cite{Perthame2008},\cite{PJ.GR:09},\cite{Raoul2009}). These works mainly investigate the long time behavior of the solutions \cite{LD.PJ.SM.GR:08},\cite{PJ.GR:09}, the stability of stationary solutions \cite{AC.SC:04},\cite{Raoul2009} and the asymptotic behavior of the solutions for instance in the regime of small mutations \cite{DIEKMANN2005257},\cite{Perthame2008}. Here, we are mainly interested in an approach based on Hamilton-Jacobi equations with constraint that allows  to study the asymptotic solutions in the regime of small mutations and in long time. This approach, which has been developed during the last decade to study models from evolutionary biology, was first suggested in \cite{DIEKMANN2005257}. The approach was rigorously justified in  \cite{Perthame2008},\cite{Barles2009},\cite{Lorz2011} in the case of homogenous environments and then was extended to study more complex models with possible heterogeneity. Here, we will first focus on the case of homogeneous environments but extend the previous results in \cite{Perthame2008},\cite{Barles2009},\cite{Lorz2011}  to consider  general initial conditions. We will next show how this result would allow us to treat the case with a temporally piecewise constant environment.

We assume that mutations have a small effect, and we change the time scale to study the effect of mutations on the evolution of the population. More precisely, taking $\sigma=\varepsilon^2$ and making the change of variable $t\mapsto t/\varepsilon$, one obtains the rescaled problem
\begin{equation}\label{eq:neps}
\begin{cases}
\partial_t \neps - \varepsilon \Delta \neps = \frac{1}{\varepsilon} \neps R(x,\Ieps(t)), \, x \in \R^d, \, t> 0 , \\
\neps (t=0) =\neps^0 \in L^1(\R^d),\, \neps^0\geq 0\, ,
\end{cases}
\end{equation}
\begin{equation}\label{eq:Ieps}
\Ieps(t) = \int_{\R^d} \psi(x) \neps(t,x) \dx x\, .
\end{equation}
The study of the asymptotic solutions as $\varepsilon \rightarrow 0$ has been carried out using a Hamilton-Jacobi approach   \cite{Perthame2008},\cite{Barles2009},\cite{Lorz2011}. With this scaling, the selection is fast compared to the diversification of traits arising from mutations. As a consequence, we expect that as $\varepsilon \rightarrow 0$, $\neps(t,\cdot)$ concentrates as a Dirac mass which evolves in time. A classical method to study such asymptotic solutions consists in making the Hopf-Cole transformation
\begin{equation*}
\neps(t,x) = e^{\frac{\ueps(t,x)}{\varepsilon}}\,.
\end{equation*}
The problem \eqref{eq:neps} then rewrites on $\ueps$ as
\begin{equation}\label{eq:ueps}
\begin{cases}
\partial_t \ueps - \varepsilon \Delta \ueps = | \nabla \ueps |^2 + R(x,\Ieps(t)),\quad x\in \Rd,\, t> 0,\\
\ueps(t=0) = \varepsilon \ln \neps^0\,.
\end{cases}
\end{equation}
In \cite{Perthame2008},\cite{Barles2009},\cite{Lorz2011}, the authors establish the convergence, up to a subsequence, of $(\ueps)_{\varepsilon}$ towards a function $u$ which is solution of a Hamilton-Jacobi equation in the viscosity sense \cite{crandall1992user},\cite{barles1994solutions}, for different sorts of growth rates. In those earlier works, the assumptions ensured the persistence of the population at the limit: in \cite{Perthame2008},\cite{Barles2009}, the growth rate was bounded and everywhere positive for a non-zero small enough total population size. In \cite{Lorz2011}, the growth rate was assumed concave, and the initial condition was taken such that the population was viable.\par 
In this paper, we relax these assumptions to take into account more general growth functions and no strong constraint on the initial condition. In particular, the population may not be viable at initial time and may become extinct in the limit of small mutations. We provide conditions on the initial state for the asymptotic fate of the population in a constant environment. 
Moreover, our result allows us to treat the case where the environment is piecewise constant in time, extending in this way the Hamilton-Jacobi framework to models involving sudden variations of the environment. 
Let us introduce the problem under study for a temporally piecewise constant environment. 

For $\mathcal{E}$ a discrete space, consider $e:\R_+ \rightarrow \mathcal{E}$ a piecewise constant function describing the environment. It is equivalently defined by the increasing sequence $(T_i)_{i \in \N}$ in $\R_+$ and the sequence $(e_i)_{i \in \N}$ in $\mathcal{E}$, such that for all $t\in \R_+$, there exists $j\in \N$ such that $T_j \leq t < T_j +1$ and $
e(t) = e_j$. Now, while still assuming that mutations have a small effect by taking $\sigma = \varepsilon^2$ in \eqref{eq:selection_mutation}, we also consider that the environment varies slowly compared to the birth and death events. As a consequence, the growth rate now writes $R(x,e(\varepsilon t),I(t))$. Therefore, using the same change of variables as before, $t\mapsto t/ \varepsilon$, one obtains the rescaled problem in a temporally piecewise constant environment, that writes

\begin{equation}\label{eq:neps_env}
\begin{cases}
\partial_t \neps - \varepsilon \Delta \neps = \frac{1}{\varepsilon} \neps R(x,e(t),\Ieps(t)), \, x \in \R^d, \, t> 0 , \\
\Ieps(t) = \int_{\R^d} \psi(x) \neps(t,x) \dx x\,,\\
\neps (t=0) =\neps^0 \in L^1(\R^d),\, \neps^0\geq 0\, ,
\end{cases}
\end{equation}
and from the Hopf-Cole transformation $\neps(t,x) = e^{\frac{\ueps(t,x)}{\varepsilon}}$, we can write the corresponding problem on $\ueps$: 
\begin{equation}\label{eq:ueps_env}
\begin{cases}
\partial_t \ueps - \varepsilon \Delta \ueps = | \nabla \ueps |^2 + R(x,e(t),\Ieps(t)),\quad x\in \Rd,\, t> 0,\\
\Ieps(t) = \int_{\R^d} \psi(x) \neps(t,x) \dx x\,,\\
\ueps(t=0) = \varepsilon \ln \neps^0\,.
\end{cases}
\end{equation}

In the past years, several articles have treated the evolutionary dynamics of  populations in time-varying environments. In \cite{LORENZI2015166},\cite{LA.PB.GF:19}, the authors study closely related selection-mutation models with time varying environments, but considering particular forms of growth rates. They prove that semi-explicit solutions or explicit solutions exist for this type of problems.  
 In \cite{LA.PB.GF:19}  the variation of the environment results indeed from a time dependent administration of cancer drugs. In this work, the authors also seek for numerical optimal controls corresponding to the most efficient drug delivery schedules.  
In \cite{MIRRAHIMI201541},  a similar model with the same scaling as in \eqref{eq:neps} has been studied but considering an environment which oscillates in time with a rescaled period $1/\varepsilon$, which means that  in the original time scale the period of the oscillations is of order $1$. The  Hamilton-Jacobi limit is then rigorously justified, where the limiting growth rate in the concave case derives from a homogenization process. In the long-time asymptotics, the dominant trait maximizes this homogenized growth rate. In \cite{SF_SM}, the authors first study the long time asymptotic of the selection-mutation problem in a time-periodic environment, before studying the small mutations scaling. In this framework, the solutions converge towards a Dirac mass while the population size varies periodically in time.
The limiting problem then describes the adaptation of the trait to the averaged environment over a period of fluctuations. 
In a very recent article \cite{Carrere_Nadin_pprint} the authors study  a selection-mutation model  with a time periodic environment and they investigate the impact of the different parameters of the model on the final population size. In another recent article \cite{SF.SM:19} a similar selection-mutation model has been studied where the optimal trait varies with a linear trend but in an oscillatory manner. Using the Hamilton-Jacobi approach it is then proved that the population concentrates around a dominant trait which follows the variation of the optimal trait with the same speed but with a constant lag.

In our work, we follow the approach of \cite{MIRRAHIMI201541} and consider the rescaled evolution problem. However, in our scaling the environment varies at a slower pace than time, so that we no longer face a homogeneization problem, and the population adapts to instantaneous environmental variations. 
This work therefore provides a new perspective in the study of evolutionary dynamics in time-fluctuating environments.

\subsection{Assumptions}
We first give the general set of assumptions that we consider. In these assumptions, the main novelty is that $R$ is neither concave nor bounded (Assumption \eqref{hyp:supR}). We also consider a weaker regularity assumption on $R$ (Assumption \eqref{hyp:R_Wloc}), comparing to the works in \cite{Barles2009},\cite{Lorz2011}. Finally, we do not assume anything on the viability of the population at initial time.

\subsubsection{General assumptions}\label{hypo_normal}

\begin{description}
\item[Assumptions on $\psi$ and $R$:] there exist strictly positive constants $\psi_m,\, \psi_M$ such that 
\end{description}
\begin{myequation}\label{hyp:psi}
0<\psi_m<\psi<\psi_M<+\infty ,\, \quad \psi \in W^{2,\infty}(\R^d).
\end{myequation}
We also assume that there exists $I_M>0$ such that
\begin{myequation}\label{hyp:maxR}
\underset{x\in \Rd}{\max}\, R(x,I_M) =0 \,,
\end{myequation}
so that the population size can not grow too much. There also exist strictly positive constants $K_i$ describing the negative effect of competition on reproduction.  We have $\forall x\in \Rd,\, I \in \R_+,$
\begin{myequation}\label{hyp:dRI}
-K_1 \leq \frac{\partial R}{\partial I}(x,I) \leq -K_1^{-1}<0,
\end{myequation}
\begin{myequation}\label{hyp:supR}
-K_2- K_3 |x|^2 \leq R(x,I) \leq K_0 \quad \forall 0\leq I \leq 2I_M,
\end{myequation}
\begin{myequation}\label{hyp:DeltaPsiR}
\Delta (\psi R)  \geq -K_3\quad \forall x\in \Rd,\, I \in [0,2I_M]\,,
\end{myequation}
and furthermore,\begin{myequation}\label{hyp:R_Wloc}
\forall 0\leq I \leq 2 I_M,\,  R(\cdot,I) \in W_{\text{loc}}^{2,\infty}(\Rd)\,.
\end{myequation}
The three previous assumptions are more general than the ones used in previous works \cite{Barles2009},\cite{Lorz2011}. In those works $R$ was assumed to be either concave or bounded. Finally, we assume that the space of traits having a positive growth rate in the absence of competition is compact:
\begin{myequation}\label{hyp:R+compact}
\{x\in \Rd,\, R(x,0) \geq 0 \} \text{ is compact in } \Rd\,.
\end{myequation}

\begin{description}
\item[Assumptions on the initial condition:] there exists $I_m$ such that
\end{description}
\begin{myequation}\label{hyp:I0}
0< I_m \leq \Ieps(0) \leq I_M, 
\end{myequation}
and there exist strictly positive constants $A_{i},\,B_{i}>0$, with $i\in \{1,2\}$, such that 
\begin{myequation}\label{hyp:n0}
\neps^0(x) = e^{\frac{\ueps^0(x)}{\varepsilon}},\, \text{ with } 
-A_1-B_1|x|^2\leq \ueps^0(x) \leq A_2 -B_2|x|\,.
\end{myequation}
Note that the right-hand side  is meant to control the initial population density when $|x|$ is large. The left-hand side inequality may be relaxed (see Remark \ref{rema:noyau_chaleur}). Finally, we assume that
\begin{myequation}\label{as:u0-Lip}
(\ueps^0)_{\varepsilon} \text{ is a sequence of locally uniformly Lipschitz functions converging locally uniformly to $u^0$}.
\end{myequation}


\subsubsection{Assumptions in a concave setting}\label{hypo:concave}
We also give a convergence result in a concave setting, that provides enough regularity to fully characterize the dynamics. This framework relies on the uniform concavity of $\ueps^0$ as well as on the concavity of $R$. In particular, it is possible under additional assumptions to derive the so-called \textit{canonical equation} that describes the dynamics of the unique concentration point of the population over time. 

\begin{description}
\item[Assumptions on $\psi$ and $R$:] we assume that $\psi$ is smooth and strictly positive ; $R$ is smooth
\end{description}
 and satisfies \eqref{hyp:maxR} and \eqref{hyp:dRI}. Assumption \eqref{hyp:supR} is refined to: there exist positive constants such that 
\begin{myequation}\label{hyp:supR_concave}
-K_2 - K_3 |x|^2 \leq R(x,I) \leq K_0 - K_1 |x|^2 \, \text{ for } 0\leq I \leq 2 I_M\,.
\end{myequation}
Moreover, we assume that
\begin{myequation}\label{hyp:conc_D2R}
-2\underline{K}_2 \leq D^2 R(x,I) \leq -2 \overline{K}_2 <0\,,
\end{myequation}
and
\begin{myequation}\label{hyp:DIDxR}
\left| \frac{\partial^2 R}{\partial I \partial x_i} (x,I) \right| + \left| \frac{\partial^3 R}{\partial I \partial x_i \partial x_j}(x,I) \right| \leq K_3\,.
\end{myequation}
\begin{description}
\item[Assumptions on the initial condition:] $\ueps^0$ is strictly uniformly concave: for some positive 
\end{description}
constants,
\begin{myequation}\label{hyp:conc_D2u0}
\begin{aligned}
-\underline{L}_0-\underline{L}_1|x|^2 &\leq \ueps^0(x) \leq \overline{L}_0 - \overline{L}_1 |x|^2\,,\\
-2 \underline{L}_1 &\leq D^2 \ueps^0 \leq -2 \overline{L}_1\,.
\end{aligned}
\end{myequation}
We also have that $(\Ieps^0)_{\varepsilon}$ converges to $I_0$ with
\begin{myequation}\label{hyp:conc_I0}
0< I_m \leq I_0 \leq I_M\,.
\end{myequation}
\begin{description}
\item[Assumptions for the canonical equation:] we also assume that 
\end{description}
\begin{myequation}\label{hyp:canoniqueR}
\parallel D^3  R(\cdot,I) \parallel_{L^{\infty}(\Rd)} \leq K_4
\end{myequation}
and
\begin{myequation}\label{hyp:canoniqueU}
\parallel D^3  u_\eps^0(\cdot) \parallel_{L^{\infty}(\Rd)} \leq L_2\,.
\end{myequation}

\subsection{Main results and plan of the paper}
We first prove the local uniform convergence (up to a subsequence) of $(\ueps)_\eps$ towards a continuous function $u$, and the weak convergence in the sense of measures of $(\neps)_{\varepsilon}$. Assuming convergence of $(\Ieps)_\eps$, we are also able to identify this limit as the viscosity solution of a Hamilton-Jacobi equation that may carry a constraint depending on the limiting function $I$. First, let 
\begin{equation}\label{eq:def_gammat}
\Gamma_t := \{x\in \Rd,\, u(t,x)=0 \}
\end{equation}
be the space of zeros of the limiting function $u$.

\begin{theo}\label{theo:CV_ueps}
Under Assumptions \eqref{hyp:psi}-\eqref{hyp:n0}, 
\begin{enumerate}
\item[(i)] after extraction of a subsequence, $(\ueps)_{\varepsilon}$ converges locally uniformly to a function $u\in \mathcal{C}((0,\infty)\times \Rd)$ with $u \leq 0$. Moreover, up to a subsequence, $(\neps)_{\varepsilon}$ converges weakly in the sense of measures towards $n\in L^{\infty}(\R_+;\mathcal{M}^1(\Rd))$, with $\mathcal{M}^1(\Rd)$ the space of Radon measures in $\Rd$. Finally, a.e in $t$, Supp $n(t,\cdot) \subset \Gamma_t$.
\item[(ii)] Moreover, if  we assume aditionally \eqref{as:u0-Lip},
then $u\in \mathcal{C}([0,T) \times \Rd)$ and $u(0,x)=u^0(x)$.
\item[(iii)] If $(\Ieps)_{\varepsilon}$ converges to $I$ on some time interval $[0,T)$ with $T \in (0,+\infty]$, then $u$ is a viscosity solution of the following Hamilton-Jacobi equation on $[0,T)\times \Rd$:
\begin{equation}\label{eq:HJ_theo}
\partial_t u = |\nabla u |^2 + R(x,I(t))
\,.
\end{equation}
While $I$ is lower bounded by a strictly positive constant, for a.e $t\in [0,T)$, Equation \eqref{eq:HJ_theo} is complemented with the constraint 
\begin{equation}\label{eq:HJ_theo_constaint}
\max_{x\in \Rd} u(t,x) = 0\,,
\end{equation}
and for any Lebesgue continuity point of $I$, we have that 
\begin{equation*}
\Gamma_t \subseteq \{x\in\Rd,\, R(x,I(t))=0\}\,.
\end{equation*} 
\end{enumerate}
\end{theo}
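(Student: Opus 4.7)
The plan is to first derive uniform (in $\varepsilon$) a priori estimates on $I_\varepsilon$ and on $u_\varepsilon$, then extract convergent subsequences via Arzel\`a--Ascoli, and finally pass to the limit in the Hopf--Cole equation \eqref{eq:ueps}.

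For (i), I would first show that $I_\varepsilon$ is uniformly bounded on any finite time interval. Multiplying the equation on $n_\varepsilon$ by $\psi$, integrating, and exploiting \eqref{hyp:psi}, \eqref{hyp:maxR}, \eqref{hyp:dRI}, and \eqref{hyp:DeltaPsiR} yields a differential inequality on $I_\varepsilon$ that, combined with \eqref{hyp:I0}, gives $I_\varepsilon(t)\leq C$. Next I would construct explicit barriers for $u_\varepsilon$: an upper barrier of the form $A+K_0 t - B|x|$ propagated from the upper bound in \eqref{hyp:n0} via the heat kernel representation of $n_\varepsilon$ and the estimate $R\leq K_0$; and a Gaussian-type lower barrier $-a(t)-b(t)|x|^2$, constructed as a subsolution of \eqref{eq:ueps} thanks to the quadratic floor in \eqref{hyp:supR} and the lower bound in \eqref{hyp:n0}. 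Local gradient estimates then follow from a Bernstein-type argument on $|\nabla u_\varepsilon|^2$, using \eqref{hyp:R_Wloc} and the already-established pointwise bounds on $u_\varepsilon$, which gives local equicontinuity. Arzel\`a--Ascoli yields a subsequence converging locally uniformly to some continuous $u$. The bound $u\leq 0$ comes from the uniform $L^1$ bound on $n_\varepsilon$ (via $I_\varepsilon\leq C$ and $\psi\geq\psi_m$): if $u(t,x)>0$ on an open set, then $n_\varepsilon=e^{u_\varepsilon/\varepsilon}$ would blow up exponentially there. Weak-$\ast$ compactness of $(n_\varepsilon)$ in $L^\infty(\R_+;\mathcal{M}^1(\Rd))$ follows from the same $L^1$ bound. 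The inclusion $\mathrm{Supp}\,n(t,\cdot)\subseteq\Gamma_t$ uses that on any compact set where $u(t,\cdot)\leq -\delta<0$, local uniform convergence forces $n_\varepsilon\to 0$ exponentially, so the limit measure puts no mass there.

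For (ii), assumption \eqref{as:u0-Lip} allows the upper and lower barriers to be matched with the initial data uniformly on compacts at $t=0$, yielding local equicontinuity of $(u_\varepsilon)$ down to $t=0$ and hence $u(0,x)=u^0(x)$. For (iii), once $I_\varepsilon\to I$ on $[0,T)$, the coefficient $R(x,I_\varepsilon(t))$ converges to $R(x,I(t))$ locally uniformly in $x$ at Lebesgue continuity points of $I$. Standard stability of viscosity solutions \cite{crandall1992user},\cite{barles1994solutions} applied to \eqref{eq:ueps}, with the vanishing viscosity term $\varepsilon\Delta u_\varepsilon$ disappearing in the limit, then gives that $u$ is a viscosity solution of \eqref{eq:HJ_theo}. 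For the constraint $\max_x u(t,\cdot)=0$ when $I(t)\geq c>0$: the identity $\int\psi\, n(t,\cdot)=I(t)\geq c$ together with $\psi\leq\psi_M$ forces $\int n(t,\cdot)\geq c/\psi_M>0$, so $n(t,\cdot)$ must charge $\Gamma_t$, which is therefore nonempty. For $\Gamma_t\subseteq\{x:R(x,I(t))=0\}$ at Lebesgue continuity points of $I$: at $x^*\in\Gamma_t$, the function $\varphi\equiv 0$ touches $u$ from above at $(t,x^*)$ because $u\leq 0=u(t,x^*)$, so the viscosity subsolution inequality gives $0\leq R(x^*,I(t))$; the reverse inequality follows from the uniform bound $I_\varepsilon\leq I_M$, since $R(x^*,I(t))>0$ would, through the equation and continuity of $R$, force exponential local growth of $n_\varepsilon$ and violate the mass bound.

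The main obstacle is the construction of the uniform lower barrier for $u_\varepsilon$ in this generality: without concavity of $R$ \cite{Lorz2011} or uniform positivity of $R$ at small mass \cite{Perthame2008},\cite{Barles2009}, the barrier must simultaneously accommodate the quadratic floor in \eqref{hyp:supR} and the only-quadratic initial lower bound in \eqref{hyp:n0}, while the coefficient $1/\varepsilon$ in front of $R$ makes the ansatz $-a(t)-b(t)|x|^2$ delicate to close. A secondary difficulty is that the constraint $\max u(t,\cdot)=0$ may fail when the population is extinct at the limit, which is precisely why it is stated only in the regime where $I$ is bounded away from zero.
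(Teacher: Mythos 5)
Your high-level plan (a priori $L^\infty$ and Lipschitz bounds, Arzel\`a--Ascoli, stability) is the paper's plan too, and several components match exactly — the upper and lower barriers for $u_\eps$, the proof of $u\leq 0$ via the mass bound, weak-$\ast$ compactness of $n_\eps$, and the argument that $\mathrm{Supp}\,n(t,\cdot)\subset\Gamma_t$. However there are two substantive gaps and one misdiagnosis of difficulty.

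\textbf{Gap 1 (the Lipschitz/gradient estimate).} You propose ``a Bernstein-type argument on $|\nabla u_\eps|^2$.'' Under the present assumptions $R$ is neither bounded nor concave and can decay like $-K_3|x|^2$, so $\nabla_x R$ grows linearly; a direct Bernstein computation on $\nabla u_\eps$ produces a term $\nabla_x R\cdot\nabla u_\eps/|\nabla u_\eps|$ with no mechanism to absorb its linear growth, and the argument does not close. The paper's essential technical step — and the piece of Proposition~\ref{prop:regularizing_effect} that is genuinely new relative to \cite{Perthame2008,Barles2009,Lorz2011} — is the substitution $v_\eps=\sqrt{2D(T)^2-u_\eps}$, which turns the equation into one of the form $\partial_t v-\eps\Delta v-[\eps/v-2v]|\nabla v|^2=-R/(2v)$; since $u_\eps\geq -F_1t-F_2|x|^2$ implies $v_\eps\gtrsim 1+|x|$, the right-hand side and the $\nabla_x R/(2v)$ term in the differentiated equation become bounded, and one can close the Bernstein argument with the explicit supersolution $z(t,x)=\tfrac{1}{2\sqrt{t}}+\tfrac{A_4L^2}{L^2-|x|^2}+\theta(T,L)$. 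Without the change of variable, the proposal's gradient estimate is not established.

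\textbf{Gap 2 (passage to the limit in (iii)).} You invoke ``standard stability of viscosity solutions'' after noting that $R(x,I_\eps(t))\to R(x,I(t))$ at Lebesgue points of $I$. But $I$ is only BV, so $t\mapsto R(x,I(t))$ may be discontinuous, and the stability theorem for viscosity solutions requires the Hamiltonian to converge locally uniformly (in particular to be continuous at the limit). The paper bypasses this precisely by introducing $\Phi_\eps(t,x)=u_\eps(t,x)-\int_0^t R(x,I_\eps(s))\,\dx s$; since $I_\eps\to I$ uniformly on $[0,T)$ (or even just a.e.\ with the uniform bound $I_\eps\leq 2I_M$), the time-integrals $\int_0^t R,\nabla R,\Delta R$ converge locally uniformly to continuous functions, and the equation for $\Phi_\eps$ has continuous limiting coefficients to which stability applies. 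After passing to the limit in $\Phi$ one recovers \eqref{eq:HJ_theo} for $u$. Your route skips this device and would need it to be rigorous.

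\textbf{Misdiagnosed obstacle.} You flag the lower barrier as the main difficulty, citing ``the coefficient $1/\varepsilon$ in front of $R$.'' After the Hopf--Cole transform that factor is gone: \eqref{eq:ueps} reads $\partial_t u_\eps-\eps\Delta u_\eps=|\nabla u_\eps|^2+R(x,I_\eps(t))$, and the simple quadratic subsolution $\underline u(t,x)=-A_1-M_1|x|^2-(2d\eps M_1+K_2)t$ (with $M_1=\max(\sqrt{K_3}/2,B_1)$) closes directly using the quadratic floor $R\geq -K_2-K_3|x|^2$. The genuine obstacle is the gradient estimate addressed above, not the pointwise barrier.

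Your argument for $\Gamma_t\subseteq\{R(\cdot,I(t))=0\}$ at Lebesgue points is also less precise than the paper's: rather than appealing to ``exponential local growth of $n_\eps$,'' the clean route (used in the paper) is to integrate the already-established HJ equation $\partial_t u\geq R(\overline x,I(t))$ over $(t,t+s)$ at fixed $\overline x\in\Gamma_t$, divide by $s$, and let $s\to 0^+$ using $u\leq 0=u(t,\overline x)$ and continuity of $I$ at $t$.
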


\begin{rema}\label{rema:noyau_chaleur}
 Assuming \eqref{hyp:supR},  the lower bound on $\ueps^0$ in \eqref{hyp:n0} may be relaxed to: $\exists x_0\in \Rd$, $L_0,\,M_0>0$ such that 
\begin{equation}\label{hyp2_ueps0}
\forall x \in B_{L_0}(x_0),\, \ueps^0(x) \geq -M_0\,.
\end{equation}
Then thanks to the fundamental solution of the operator $\partial_t n-\sigma \Delta n+K_3 \,x^2 n$ ,  for all $t_1>0$, there exist positive constants $D_{i}$, $i\in \{1,2,3 \}$,   such that for $\varepsilon\leq 1$,
\begin{equation}\label{eq:resultat_chaleur}
\ueps(t,x) \geq - D_1 |x|^2 - D_2 - D_3 t \quad \text{ for } (t,x) \in (t_1,+\infty)\times \Rd\,.
\end{equation}
See \cite{Barles2009} where a similar result is proved in the case where $R$ is bounded. See also \cite{alfaro2017quadratic} for the study of a closely related model with quadratic growth rate.
\end{rema}
Theorem \ref{theo:CV_ueps} shows that it is necessary to understand the asymptotic behaviour of $(\Ieps)_{\varepsilon}$. Our main result is the following theorem, that uses the convergence of $(\ueps)_{\varepsilon}$ and the one of $(\neps)_{\varepsilon}$, both obtained up to a subsequence, to provide conditions at initial time for the asymptotic extinction or persistence of the population.

\begin{theo}\label{theo:critere_asympt}
Assume \eqref{hyp:psi}-\eqref{hyp:n0}, and that $(n_{\eps}^0)_{\eps}$ converges weakly in the sense of measures to $n^0$.
\begin{enumerate}
\item[i)] If
\begin{equation}\label{eq:cond_pers}
\text{Supp } n^0 \cap \{x \in \Rd,\, R(x,0) >0 \} \neq \emptyset\,,
\end{equation}
then for every $T>0$, there exists $\underline{I}(T)>0$ and $\varepsilon(T)>0$ such that 
\begin{equation}\label{eq:mino_I}
I_{\varepsilon}(t)\ge \underline{I}(T),\quad\quad \forall t\in[0,T), \forall \varepsilon\le\varepsilon(T)\,,
\end{equation}
and $(\Ieps)_{\varepsilon}$ converges up to a subsequence a.e in $t\in \R_+$ towards a BV function $I$.
\end{enumerate}
Assume additionally \eqref{as:u0-Lip}.
\begin{enumerate}
\item[ii)] If 
\begin{equation}\label{eq:cond_ext}
\exists C>0 \text{ such that } \Gamma_{0} \subseteq \{ x\in \Rd,\, R(x,0) \leq-C\}\,,
\end{equation}
then there exists a finite and positive constant $0<T_0<+\infty$, such that $\, \lim_{\varepsilon \rightarrow 0} \Ieps(t)\vert_{(0,T_0)} =0$.
\item[iii)] If 
\begin{equation}\label{eq:cond_ext_ponct}
\Gamma_0 \subseteq \{ x \in \Rd,\, R(x,0)\leq 0\}\,,
\end{equation}
then $\forall \delta>0,\, \exists \varepsilon_{\delta} >0,\, \forall \varepsilon < \varepsilon_{\delta},\, \exists t_{\varepsilon}\in [0,T],$ such that $$ \Ieps(t_{\varepsilon}) < \delta.$$
\end{enumerate}
\end{theo}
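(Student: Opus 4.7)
The plan is to prove the three statements separately, each hinging on the Hamilton-Jacobi framework from Theorem \ref{theo:CV_ueps}.

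\textbf{Part (i).} Pick $x_0 \in (\mathrm{Supp}\,n^0) \cap \{R(\cdot,0) > 0\}$ by \eqref{eq:cond_pers}. Continuity of $R$ and the uniform bound \eqref{hyp:dRI} yield $r,\eta,\delta_0 > 0$ with $R(x,I) \geq \eta$ on $B_r(x_0) \times [0,\delta_0]$. Weak convergence $n_\varepsilon^0 \to n^0$ furnishes $\int_{B_r(x_0)} n_\varepsilon^0 \geq c > 0$ for small $\varepsilon$. I then build a quadratic subsolution $\phi(t,x) = \alpha t - C - \beta|x-x_0|^2$ of \eqref{eq:ueps}, with $\beta \geq B_1$ chosen large enough that $4\beta^2 > K_3$ so that $|\nabla\phi|^2$ dominates the lower bound from \eqref{hyp:supR}, $\alpha \in (0,\eta)$, and $C$ large so that $\phi(0,\cdot) \leq u_\varepsilon^0$ globally via \eqref{hyp:n0}. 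Whenever $I_\varepsilon(t) \leq \delta_0$, the monotonicity of $R$ in $I$ makes $\phi$ a valid subsolution, so viscosity comparison gives $u_\varepsilon \geq \phi$. For $t \geq 2C/\alpha$ one has $\phi > 0$ on a ball $B_{r'}(x_0)$, hence $n_\varepsilon \geq 1$ there and $I_\varepsilon \geq \psi_m|B_{r'}(x_0)|$; choosing the constants so this quantity exceeds $\delta_0$ produces a contradiction. Restarting the construction from any intermediate time $s$ (using the lower bound on $u_\varepsilon(s,\cdot)$ from Remark \ref{rema:noyau_chaleur}) delivers the uniform bound $I_\varepsilon \geq \underline{I}(T)$ on $[0,T]$. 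For the BV-convergence statement, I differentiate $\varepsilon I_\varepsilon'(t) = \int \psi R\,n_\varepsilon + \varepsilon^2 \int n_\varepsilon \Delta\psi$, exploit $\Delta(\psi R) \geq -K_3$ from \eqref{hyp:DeltaPsiR} together with the damping $\partial_I R \leq -K_1^{-1}$ from \eqref{hyp:dRI}, and apply Helly's selection theorem.

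\textbf{Part (ii).} The hypothesis $\Gamma_0 \subseteq \{R(\cdot,0) \leq -C\}$ together with $R(\cdot,I_\varepsilon) \leq R(\cdot,0)$ lets me compare $u_\varepsilon$ with the viscosity solution $\bar u$ of the HJ equation with frozen growth rate $R(\cdot,0)$ and initial data $u^0$ (given by \eqref{as:u0-Lip}). I work with the variational representation $\bar u(t,x) = \sup_{\gamma(t)=x}\bigl[u^0(\gamma(0)) + \int_0^t \bigl(R(\gamma(s),0) - \tfrac{1}{4}|\dot\gamma(s)|^2\bigr)\,ds\bigr]$ and split according to whether the optimizer starts in a small neighborhood $\mathcal{N}$ of $\Gamma_0$ (where by continuity $R(\cdot,0) \leq -C/2$, so the integral is $\leq -Ct/2$) or outside (where $u^0 \leq -\delta_{\mathcal{N}} < 0$ by continuity of $u^0$ and compactness of $\Gamma_0$ from \eqref{hyp:R+compact}, while the kinetic cost pins $\gamma(0)$ near $x$ for short $t$). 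Balancing the two contributions yields $\max_x \bar u(t,\cdot) \leq -ct$ on some interval $(0,T_0]$. Combining with the upper bound $u_\varepsilon(t,x) \leq A_2 - B_2|x| + (K_0 + B_2^2)t$ (a standard supersolution propagating \eqref{hyp:n0}) gives pointwise strict negativity of $u_\varepsilon$ on compacts plus exponential tail decay, hence $I_\varepsilon \to 0$ uniformly on $(0,T_0)$.

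\textbf{Part (iii).} Argue by contradiction: if $I_{\varepsilon_k}(t) \geq \delta > 0$ on $[0,T]$ along a subsequence, then by part (i) and Theorem \ref{theo:CV_ueps}, $I_{\varepsilon_k} \to I \geq \delta$ a.e., and $u$ solves \eqref{eq:HJ_theo}--\eqref{eq:HJ_theo_constaint} with $\Gamma_t \subseteq \{R(\cdot,I(t)) = 0\}$ at Lebesgue continuity points of $I$. Using \eqref{as:u0-Lip} and continuity of $u$ on $[0,T) \times \R^d$, I pick continuity points $t_n \to 0^+$ and $\bar x_n \in \Gamma_{t_n}$ with $R(\bar x_n,I(t_n)) = 0$; by \eqref{hyp:R+compact} and compactness, extract $\bar x_n \to x^* \in \Gamma_0$. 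Continuity of $R$ then gives $R(x^*,I(0^+)) = 0$, but \eqref{eq:cond_ext_ponct} and \eqref{hyp:dRI} yield $R(x^*,I(0^+)) \leq R(x^*,0) - K_1^{-1}I(0^+) \leq -K_1^{-1}\delta < 0$, a contradiction.

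The main obstacle is upgrading the subsolution argument in Part (i) to a \emph{uniform} lower bound on $[0,T]$. The quadratic subsolution alone only prevents $I_\varepsilon$ from staying under $\delta_0$ on an interval longer than $2C/\alpha$, so to rule out rapid dips one needs either a restart argument at every intermediate time, relying crucially on Remark \ref{rema:noyau_chaleur} for a uniform quadratic lower bound of $u_\varepsilon(s,\cdot)$, or a BV estimate on $I_\varepsilon$. Establishing the latter is delicate because $I_\varepsilon'$ enters at scale $1/\varepsilon$; it requires careful use of \eqref{hyp:DeltaPsiR} and \eqref{hyp:dRI}, closed by bootstrapping with the already-established lower bound $I_\varepsilon \geq \underline{I}(T)$.
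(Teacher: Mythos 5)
Your Parts (ii) and (iii) are plausible alternative routes. For (ii) you replace the paper's direct test-function estimate (which splits $I_\eps$ over $\mathcal{O}_\delta$ and its complement and uses a Gronwall inequality on $\int \varphi_\eps n_\eps$) by a Hopf--Lax representation of the frozen-rate viscosity solution and a viscous-comparison argument; this can be made to work, but you should be aware that passing from $u_\eps \leq \bar u^\eps$ to the inviscid bound $\max_x \bar u(t,\cdot) \leq -ct$ requires uniform-in-$\eps$ estimates that mirror the paper's regularity lemmas, so this is not actually less machinery. For (iii) your diagonal argument with Lebesgue continuity points $t_n\to 0^+$ and $\bar x_n \in \Gamma_{t_n}$ is essentially the paper's argument written in sequence form, and it closes.

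Part (i) has a genuine gap. Your quadratic-subsolution argument establishes only the following: if $I_\eps(s') \leq \delta_0$ for \emph{all} $s'$ in an interval $[s,s+\tau]$ of fixed length, then the comparison principle forces $u_\eps(s+\tau,\cdot) > 0$ somewhere near $x_0$, a contradiction. This shows that $\{t: I_\eps(t) < \delta_0\}$ has no connected component of length $\geq \tau$, and the restart via Remark \ref{rema:noyau_chaleur} only reproduces this interval-wise statement from every intermediate time. It does \emph{not} give the pointwise bound $I_\eps(t) \geq \underline{I}(T)$: since $I_\eps' = \eps\int \Delta\psi\, n_\eps + J_\eps$ with $J_\eps = \tfrac{1}{\eps}\int \psi\, n_\eps R$, the term $J_\eps$ is a priori of size $1/\eps$ and can be very negative; $I_\eps$ may plunge arbitrarily close to zero and rebound within a time window much shorter than $\tau$, all without contradicting the subsolution conclusion. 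Preventing exactly these fast dips is the heart of the paper's proof: it is accomplished by the differential inequality for $(J_\eps)_-$ in Lemma \ref{lemme:ineq_dJeps} and the three-case decomposition (A, B1, B2) around the $\eps$-dependent stopping times $t_\eps, s_\eps$, which controls the initial layer where $(J_\eps(0))_-$ can be $O(1/\eps)$. You flag this difficulty and propose to resolve it by a BV estimate ``closed by bootstrapping with the already-established lower bound $I_\eps \geq \underline{I}(T)$'' — but that bound is precisely what you are trying to prove, so the bootstrap is circular. Without an analogue of Lemma \ref{lemme:ineq_dJeps} to show that $(J_\eps)_-$ relaxes to $O(\eps)$ once $I_\eps$ is bounded below (and a careful treatment of the initial layer where it has not yet relaxed), the pointwise lower bound does not follow from your subsolution argument.
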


Note that these conditions are based on the viability of the traits initially present in the population, and not on the growth rate of the individuals at initial time, since we look at $R(\cdot,0)$ rather than at $R(\cdot,\Ieps(0))$. Numerical simulations in Section \ref{sec:numerics} will illustrate these situations. \par 

The first assertion corresponds to the situation where at least a part of the initial population is viable for some strictly positive competition level. In this case we can ensure that the population size stays uniformly strictly positive, and the previously developped tools apply to prove the convergence. We emphasize that the population size may initially decrease for $\varepsilon$ small, so that the competition also decreases, up until the growth rate reaches positive values for some traits in the population (see Figure \ref{fig1:constant_env} (\ref{sub12pers:sol}, \ref{sub12pers:sol_ext}, \ref{sub12pers:Rho})). \par 
The two other assertions concern the case of a population where all individuals have a negative growth rate for any strictly positive competition level. In this case, we show that at the limit, the population size reaches zero, either punctually or during a time interval.

\begin{rema}\label{rema_concave_critique}
If $\Gamma_0 \subseteq \{ x\in \Rd,\, R(x,0)=0\}$, then the dynamics can not be identified: depending on the shape of $R$, both extinction or persistence can occur. 
\end{rema}

\begin{rema}\label{rema:cas_non_decrit}
Note that in the condition for asymptotic persistence in \textit{i)} the set $ \text{Supp } n^0$ is involved, which is a smaller set than $\Gamma_0$, which is the one involved in the condition for asymptotic extinction in \textit{ii), iii)}. Therefore, one situation is not described, namely when 
\begin{equation*}
\text{Supp } n^0 \subseteq \{x \in \Rd,\, R(x,0) \leq 0 \} \text{ and } \Gamma_0 \cap \{x \in \Rd,\, R(x,0) > 0 \} \neq \emptyset\,.
\end{equation*}
We illustrate this situation numerically in Section \ref{sec:numerics} (see Figure \ref{fig4}).
\end{rema}

Finally, we give additional results in a concave setting that was first studied in \cite{Lorz2011} in the case where the population persists at the limit. It was proved that if $R$ is concave and $(\ueps^0)_\eps$ is uniformly strictly concave, then $u$ is strictly concave. This property provides enough regularity to better understand the dynamics at play. In particular in this case, $u(t,\cdot)$ has a unique maximum point, that is there exists $\overline{x}(t)\in \R^d$ such that
$ \Gamma_t=\{\overline{x}(t) \}$. As a consequence, $\text{Supp }n(t,\cdot) =\{\overline{x}(t) \}$ which means that $n$ is a Dirac mass concentrated on the dominant trait $\overline{x}(t)$. The time evolution of $\overline{x}$ can be described by the so-called \textit{canonical equation}. Moreover, in \cite{mirrahimi2016class}, the authors show uniqueness for $u$, allowing to have a strong convergence. We combine these results with Theorem \ref{theo:critere_asympt} to extend the study to situations where the initial population is not necessarily viable, and provide an estimate of the maximal duration of extinction, after which the population grows again. 

\begin{theo}\label{theo:CV_concave}
Assume \eqref{hyp:psi}--\eqref{hyp:dRI}, \eqref{as:u0-Lip} together with \eqref{hyp:supR_concave}--\eqref{hyp:conc_I0}. 
\begin{enumerate}
\item[i)] After extraction of a subsequence, $(\ueps)_{\varepsilon}$ converges locally uniformly to a nonpositive and strictly concave function 
$u\in L_{\text{loc}}^{\infty} (\R_+;W_{\text{loc}}^{2,\infty}(\Rd)) \cap W_{\text{loc}}^{1,\infty} (\R_+; L_{\text{loc}}^{\infty}(\Rd))$. Consequently, $u(t,\cdot)$ has a unique  maximum point $\overline x(t)$ . 
\item[ii)]  If $\Gamma_0 \subseteq \{ R(\cdot,0)>0\}$, then  $(\Ieps)_{\varepsilon}$ converges to $I\in  W^{1,\infty}(\R_+)$, with $I>0$. Moreover, $(u,I)$ is the unique
 viscosity solution of \eqref{eq:HJ_theo}-\eqref{eq:HJ_theo_constaint}, combined with $u(0,\cdot)=u^0$, which is indeed a classical solution.  Furthermore, weakly in the sense of measures, for $(t,x) \in [0,T] \times \Rd$,
\begin{equation*}
\neps(t,x) \underset{{\varepsilon \rightarrow 0}}{\rightharpoonup} \overline{\rho}(t) \delta(x - \overline{x}(t))
\end{equation*}
with $\overline{\rho}(t) := \frac{I(t)}{\psi(\overline{x}(t))}$, where $I$ is implicitly defined by
\begin{equation*}
R(\overline{x}(t),I(t))=0\,.
\end{equation*}
Finally, assuming additionally \eqref{hyp:canoniqueR}-\eqref{hyp:canoniqueU}, we have that $\overline{x}\in W^{1,\infty}(\R_+;\Rd)$ and satisfies the canonical equation
\begin{equation}\label{eq:canonique_theo}
\overset{\mathbf{\cdot}}{\overline{x}} (t) = (-D^2u(t,\overline{x}(t)))^{-1}\cdot \nabla_x R(\overline{x}(t),I(t)) \,,\quad \overline{x}(0) = x_0\,.
\end{equation}
\item[iii)] If $\Gamma_0 \subseteq \{ R(\cdot,0) <0\}$, then   on $(0,\overline{T})$, with $\overline{T} := \sup \{t>0,\, R(\overline{x}(t),0)<0 \}$, $(\Ieps)_{\varepsilon}$ converges to $0$ and $u$ is the unique viscosity solution of 
\begin{equation}\label{eq:HJ_conc_ext}
\begin{cases}
\partial_t u(t,x) = |\nabla u(t,x) |^2 + R(x,0)\, \qquad x \in \R^d, \, t\in (0, \overline T),\\
u(0,x)=u^0(x)\,,
\end{cases}
\end{equation}
which is indeed a classical solution.
Moreover, $R(\overline{x}(\overline{T}),0)=0$, and assuming additionally \eqref{hyp:canoniqueR}-\eqref{hyp:canoniqueU}, we have that $\overline{x}\in W^{1,\infty}([0,\overline{T});\Rd)$ and satisfies the canonical equation
\begin{equation}\label{eq:canonique_conc_ext}
\overset{\mathbf{\cdot}}{\overline{x}} (t) = (-D^2u(t,\overline{x}(t)))^{-1}\cdot \nabla_x R(\overline{x}(t),0)\, \quad \forall t\in (0,\overline T),\quad \overline{x}(0) = x_0\,.
\end{equation}
The function $h:t\mapsto R(\overline{x}(t),0)$ is increasing on $(0,\overline{T})$ and for some constants $A_{1},A_2>0$ related to the concavity assumptions on $R$ and   $\ueps^0$, one has the following bounds
\begin{equation*}
\begin{aligned}
 A_1
 \frac{-R(\overline{x}(0),0)}{|\nabla_x R(\overline{x}(0),0)|^2}\leq  \overline{T} \leq A_2 \frac{-R(\overline{x}(0),0)}{|\nabla_x R(\overline{x}(\overline{T}),0)|^2}\,.
 \end{aligned}
\end{equation*}
\end{enumerate}
\end{theo}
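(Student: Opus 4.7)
For part (i), the plan is to propagate strict concavity uniformly in $\eps$: differentiating \eqref{eq:ueps} twice in space yields a matrix Riccati-type equation for $D^2\ueps$ which, combined with the uniform concavity bounds \eqref{hyp:conc_D2R} on $R$ and \eqref{hyp:conc_D2u0} on $\ueps^0$, permits a parabolic maximum principle argument on the largest and smallest eigenvalues of $D^2\ueps(t,\cdot)$. This would yield time-dependent constants $\underline L_1(t),\,\overline L_1(t)>0$ independent of $\eps$ satisfying $-2\underline L_1(t)\, I \leq D^2 \ueps(t,\cdot) \leq -2\overline L_1(t)\, I$. Reinjecting into \eqref{eq:ueps} provides Lipschitz-in-time control of $\ueps$, so Arzelà--Ascoli gives local uniform convergence along a subsequence to a limit $u$ that inherits the same strict concavity, hence $u(t,\cdot)$ admits a unique maximizer $\overline x(t)$.

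For part (ii), the inclusion $\text{Supp}\,n^0\subseteq\Gamma_0\subseteq\{R(\cdot,0)>0\}$ triggers Theorem \ref{theo:critere_asympt}(i), yielding a uniform lower bound $\Ieps\geq \underline I>0$ and subsequential convergence to a BV limit $I$. The constraint $\max u(t,\cdot)=0$ from Theorem \ref{theo:CV_ueps}(iii) combined with the strict concavity from (i) forces $\Gamma_t=\{\overline x(t)\}$, and $\text{Supp}\,n(t,\cdot)\subseteq\Gamma_t$ then forces $n(t,\cdot)=\overline\rho(t)\,\delta_{\overline x(t)}$. The identification $\Gamma_t\subseteq\{R(\cdot,I(t))=0\}$ gives $R(\overline x(t),I(t))=0$, which by \eqref{hyp:dRI} implicitly defines $I(t)$ as a Lipschitz function of $\overline x(t)$; combined with $I(t)=\psi(\overline x(t))\overline\rho(t)$ this gives the Dirac formula. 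Uniqueness of the viscosity solution $(u,I)$ from \cite{mirrahimi2016class} then upgrades subsequential to full convergence. The canonical equation is obtained by differentiating $\nabla_x u(t,\overline x(t))\equiv 0$ in $t$, using that differentiating \eqref{eq:HJ_theo} in $x$ and evaluating where $\nabla u=0$ gives $\partial_t\nabla_x u(t,\overline x(t))=\nabla_x R(\overline x(t),I(t))$.

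For part (iii), since $\Gamma_0$ is compact (by strict concavity and non-positivity of $u^0$), the inclusion $\Gamma_0\subseteq\{R(\cdot,0)<0\}$ produces a constant $C>0$ with $\Gamma_0\subseteq\{R(\cdot,0)\leq -C\}$, so Theorem \ref{theo:critere_asympt}(ii) applies and $\Ieps\to 0$ on some initial interval. On $(0,\overline T)$ the equation decouples to \eqref{eq:HJ_conc_ext}, whose viscosity uniqueness follows from Crandall--Lions comparison and whose regularity from (i) makes the solution classical. The canonical equation \eqref{eq:canonique_conc_ext} is derived exactly as in (ii), with $I$ replaced by $0$. Setting $h(t):=R(\overline x(t),0)$, the canonical equation gives $\dot h(t) = \nabla_x R(\overline x(t),0)^{\top}(-D^2 u(t,\overline x(t)))^{-1}\nabla_x R(\overline x(t),0) \geq 0$, so $h$ is strictly increasing while $\nabla_x R(\overline x,0)\neq 0$, and continuity forces $R(\overline x(\overline T),0)=0$.

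For the bounds on $\overline T$, the spectral estimates from (i) sandwich $\dot h$ between $|\nabla_x R(\overline x(t),0)|^2/(2\underline L_1)$ and $|\nabla_x R(\overline x(t),0)|^2/(2\overline L_1)$. Computing $\frac{d}{dt}|\nabla_x R|^2 = 2\nabla_x R^{\top}D^2 R\,(-D^2 u)^{-1}\nabla_x R$ and exploiting the similarity $D^2 R\,(-D^2u)^{-1}\sim (-D^2u)^{-1/2}D^2R\,(-D^2u)^{-1/2}$ (negative definite by \eqref{hyp:conc_D2R} and (i)) show that $|\nabla_x R(\overline x(t),0)|$ is non-increasing along $\overline x$, so it is maximal at $t=0$ and minimal at $t=\overline T$. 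Integrating the two sandwich bounds for $\dot h$ between $0$ and $\overline T$, with $h(0)=R(\overline x(0),0)$ and $h(\overline T)=0$, yields the stated sandwich bounds on $\overline T$. The chief obstacle I anticipate is establishing the uniform strict concavity in (i) with constants that remain bounded away from $0$ in time, which requires careful handling of the Riccati cross-term $2\nabla u\cdot\nabla (D^2 u)$ together with the vanishing viscosity; a secondary delicate step is justifying the differentiation along $\overline x(t)$ that yields the canonical equation, for which the extra assumptions \eqref{hyp:canoniqueR}--\eqref{hyp:canoniqueU} are needed to upgrade $\overline x$ to $W^{1,\infty}$.
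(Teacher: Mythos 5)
Your overall architecture for parts (i) and (ii) and the first half of (iii) matches the paper's: part (i) is proved by propagating uniform strict concavity à la \cite{Lorz2011}, part (ii) combines Theorem~\ref{theo:critere_asympt}(i), the constraint plus strict concavity to identify $\Gamma_t=\{\overline x(t)\}$, the result of \cite{mirrahimi2016class} for uniqueness, and the standard differentiation of $\nabla_x u(t,\overline x(t))\equiv0$ for the canonical equation; part (iii) starts from Theorem~\ref{theo:critere_asympt}(ii) (noting that $\Gamma_0=\{\overline x(0)\}$ is a singleton, so the strict inequality automatically becomes a uniform one) and proceeds to the unconstrained classical solution. These steps are all sound and consistent with the paper's proof, which mostly defers to the cited references.

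The gap is in the final estimate on $\overline T$. You claim $t\mapsto|\nabla_x R(\overline x(t),0)|$ is non-increasing because $D^2R\,(-D^2u)^{-1}$ is similar to the negative-definite symmetric matrix $(-D^2u)^{1/2}D^2R\,(-D^2u)^{1/2}$, and therefore $\tfrac{d}{dt}|\nabla_x R|^2=2\,\nabla_x R^{\top}D^2R\,(-D^2u)^{-1}\nabla_x R\le0$. This inference is false: similarity preserves the spectrum, but for a \emph{non-symmetric} matrix the sign of the quadratic form $v^{\top}Mv$ is governed by the symmetric part $\tfrac12(M+M^{\top})$, whose eigenvalues need not coincide with those of $M$. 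Concretely, with $A=\mathrm{diag}(-1,-\varepsilon)$ and $B=\begin{pmatrix}1&c\\ c&1\end{pmatrix}$ (so $A\prec0$, $B\succ0$), the form $v^{\top}ABv$ is positive for $v=(1,-K)$ when $K$ is of order $c/\varepsilon$; hence $\tfrac{d}{dt}|\nabla_x R|^2$ can be positive along the trajectory. Note that the paper itself only gestures at this with ``by the concavity assumption of $R$,'' so you are not alone here, but as written your argument does not close the gap. A correct and elementary substitute is to use the two-sided Hessian bounds \eqref{hyp:conc_D2R} directly on $R$ rather than along the flow: with $x^*=\arg\max R(\cdot,0)$, uniform concavity yields
\begin{equation*}
\overline K_2\bigl(R(x^*,0)-R(x,0)\bigr)\;\le\;|\nabla_x R(x,0)|^2\;\le\;\frac{4\underline K_2^2}{\overline K_2}\bigl(R(x^*,0)-R(x,0)\bigr),
\end{equation*}
so $|\nabla_x R(\overline x(t),0)|^2$ is comparable, with constants depending only on $\overline K_2,\underline K_2$, to $R(x^*,0)-h(t)$, which \emph{is} monotone since $h$ is increasing. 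Feeding this equivalence into the sandwich $c_1^{-1}|\nabla_x R|^2\le h'(t)\le c_2^{-1}|\nabla_x R|^2$ (with $c_1,c_2$ the spectral bounds on $-D^2u$ from part (i)) and integrating $h'$ over $[0,\overline T]$ gives the stated bounds, with the constants $A_1,A_2$ absorbing both the $c_i$ and the ratio $\underline K_2^2/\overline K_2^2$ — which is precisely what the theorem statement allows.
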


Note that the final assertion states that $\overline x(\cdot)$ reaches at a finite time $\overline T$ the zone of viable traits, that is the domain where $R(\cdot,0)$ takes positive values. We expect indeed that for $t\geq \overline T$, $(I_\eps(t))_\eps$ converges to $I(t)>0$ (see Figure \ref{fig1:constant_env}). This means that, when the effect of mutations is small but nonzero, even if initially the population is maladapted and non-viable, while the population size drops drastically, still a very small population evolves gradually  towards better traits and after some time the population becomes viable and may grow again.

We now apply Theorem \ref{theo:critere_asympt} and Theorem \ref{theo:CV_concave} to the asymptotic study of \eqref{eq:ueps_env} for the evolution of a population in a temporally piecewise constant concave environment. More precisely, we focus on the case where the growth rate in \eqref{eq:neps_env} is defined by 
\begin{equation*}
R(x,e(t),\Ieps(t)) = R_1(x, \Ieps(t))\mathds{1}_{[0,T_1)} + R_2(x, \Ieps(t))\mathds{1}_{[T_1,T_2)}\,,
\end{equation*}
for some $T_{1},T_2>0$, and $R_{1},R_2$ smooth and concave. In this situation, we can naturally define $\ueps(T_{1},x)$ and $\Ieps(T_1)$ by
\begin{equation*}
\ueps(T_{1},x) := \lim_{t\rightarrow T_{1}^-} \ueps(t,x)\,, \quad \Ieps(T_1) := \lim_{t\rightarrow T_{1}^-} \Ieps(t)\,.
\end{equation*}
 The following result follows from Theorem \ref{theo:CV_concave} (and Remark \ref{rema_concave_critique}) to deal with the discontinuity at time $T_1$ and determine the asymptotic fate of the population in the new environment. 

\begin{coro}\label{theo:CV_concave_variable}
Assume that $R_{i}$, $i\in \{1,2 \}$, satisfy \eqref{hyp:maxR}-\eqref{hyp:dRI} together with \eqref{hyp:supR_concave}--\eqref{hyp:DIDxR}, and that the initial condition verifies \eqref{hyp:conc_D2u0}-\eqref{hyp:conc_I0}. Assume additionally that  $(n_{\eps}^0)_{\eps}$ converges weakly in the sense of measures to $n^0$, with $n^0$ satisfying \eqref{eq:cond_pers}. Then in $[0,T_2)$, $(\ueps)_{\varepsilon}$ converges towards a continuous function $u$, and $(\neps)_{\varepsilon}$ converges weakly in the sense of measures towards $n$, such that
\begin{enumerate}
\item[i)] on $[0,T_1)$, $(\Ieps)_{\varepsilon}$ converges to $I>0$ with 
\begin{equation*}
(u,I) \in L_{\text{loc}}^{\infty} ([0,T_{1});W_{\text{loc}}^{2,\infty}(\Rd)) \cap W_{\text{loc}}^{1,\infty} ([0,T_{1}); L_{\text{loc}}^{\infty}(\Rd)) \times W^{1,\infty}([0,T_{1}))
\end{equation*}
where $u$ is strictly concave and is the solution of the constrained Hamilton-Jacobi equation \eqref{eq:HJ_theo} associated with the growth rate $R_1$, $I$ is defined implicitly by $R(\overline{x}(t),I(t))=0$, and one has
$n(t,x) = \overline{\rho}(t) \delta_{(x-\overline{x}(t))}$ with $\overline{\rho}(t)=\frac{I(t)}{\psi(\overline{x}(t))}$. Moreover, assuming additionally that $R_1$ and $(\ueps^0)_{\varepsilon}$ satisfy Assumptions \eqref{hyp:canoniqueR}-\eqref{hyp:canoniqueU}, we have that $\overline{x}\in W^{1,\infty}([0,T_{1});\Rd)$ and satisfies the canonical equation \eqref{eq:canonique_theo} in $(0,T_{1})$ starting from $\overline{x}(0)$.
\item[ii)] If $R_2(\overline{x}(T_1),0) \leq 0$, then there is asymptotic extinction of the population, either at a single time point or on a time interval.
\item[iii)] If $R_2(\overline{x}(T_1),0) > 0$, then $(\Ieps)_{\varepsilon}$ converges towards $I>0$ on $[T_1,T_2)$ and $u$ is strictly concave with 
\begin{equation*}
(u,I) \in L_{\text{loc}}^{\infty} ([T_{1},T_2);W_{\text{loc}}^{2,\infty}(\Rd)) \cap W_{\text{loc}}^{1,\infty} ((T_{1},T_2); L_{\text{loc}}^{\infty}(\Rd)) \times W^{1,\infty}((T_{1},T_2))\,.
\end{equation*}
Moreover, if $R_2$ and $\ueps(T_1,x)$ verify \eqref{hyp:canoniqueR}-\eqref{hyp:canoniqueU}, then $\overline{x}\in W^{1,\infty}((T_{1},T_2);\Rd)$ and satisfies the canonical equation \eqref{eq:canonique_theo} in $(T_{1},T_2)$ starting from $\overline{x}(T_1)$.
\end{enumerate}
\end{coro}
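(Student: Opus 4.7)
The plan is to treat the two time intervals $[0,T_1)$ and $[T_1,T_2)$ separately, invoking Theorem \ref{theo:CV_concave} on each and bridging the switch at $T_1$ via continuity of the limiting profile and preservation of uniform strict concavity. First, on $[0,T_1)$ the growth rate is $R_1$, which satisfies the concave hypotheses, and the assumption \eqref{eq:cond_pers} (via Theorem \ref{theo:critere_asympt} i) in the concave setting) guarantees that $(\Ieps)_\eps$ stays uniformly bounded from below by a positive constant up to $T_1$. Theorem \ref{theo:CV_concave} ii) then applies verbatim and yields assertion \textit{i)}: the full regularity of $(u,I)$, strict concavity of $u(t,\cdot)$ with unique maximum $\overline x(t)$, the Dirac-mass representation of $n$, and the canonical equation \eqref{eq:canonique_theo} on $(0,T_1)$.

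To restart the analysis at $T_1$ with growth rate $R_2$, I would verify that the new initial datum $\ueps(T_1,\cdot)$ fulfils the assumptions \eqref{hyp:conc_D2u0}--\eqref{hyp:conc_I0} uniformly in $\eps$. The two-sided quadratic bounds on $\ueps(T_1,\cdot)$ follow from the uniform Lipschitz-in-time and quadratic-in-space estimates built in the proof of Theorem \ref{theo:CV_concave}, while the key uniform bound $-2\underline L_1\le D^2\ueps(T_1,\cdot)\le -2\overline L_1$ (with constants independent of $\eps$) is propagated from $\ueps^0$ by a Bernstein-type argument using the concavity of $R_1$, as in \cite{Lorz2011}; this is the same mechanism that gives strict concavity of $u$ in part \textit{i)}. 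The convergence of $\ueps(T_1,\cdot)$ to $u(T_1,\cdot)$ locally uniformly and the convergence of $\Ieps(T_1)$ to $I(T_1^-)\in[I_m,I_M]$ follow from the local-uniform/BV control obtained on $[0,T_1)$.

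With these inputs ready, I would split on the sign of $R_2(\overline x(T_1),0)$. Because $u(T_1,\cdot)$ is strictly concave and vanishes only at $\overline x(T_1)$, one has $\Gamma_{T_1}=\{\overline x(T_1)\}$ and $\mathrm{Supp}\,n(T_1,\cdot)=\{\overline x(T_1)\}$. If $R_2(\overline x(T_1),0)<0$, condition \eqref{eq:cond_ext} of Theorem \ref{theo:critere_asympt} holds with $C=-R_2(\overline x(T_1),0)$, producing extinction on a time interval after $T_1$; if $R_2(\overline x(T_1),0)=0$ exactly, only condition \eqref{eq:cond_ext_ponct} is available and one gets punctual extinction (Remark \ref{rema_concave_critique} accommodates the borderline case). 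This gives \textit{ii)}. If instead $R_2(\overline x(T_1),0)>0$, the support condition \eqref{eq:cond_pers} is satisfied for the restarted problem, so Theorem \ref{theo:CV_concave} ii) applied on $[T_1,T_2)$ with initial datum $\ueps(T_1,\cdot)$ delivers the announced concave regularity, the convergence $\Ieps\to I>0$, the Dirac representation of $n$ and, under \eqref{hyp:canoniqueR}--\eqref{hyp:canoniqueU}, the canonical equation \eqref{eq:canonique_theo} on $(T_1,T_2)$ started from $\overline x(T_1)$.

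The main technical obstacle is the transfer of concavity across $T_1$: one must show that the lower bound $D^2\ueps(t,\cdot)\le -2\overline L_1$ is not lost when we freeze $t=T_1$ and feed the profile back as an initial condition, so that the hypotheses of Theorem \ref{theo:CV_concave} are genuinely met on the second slab. Once that uniform strict concavity is in hand, the trichotomy on the sign of $R_2(\overline x(T_1),0)$ reduces the corollary to two applications of the already proved theorems.
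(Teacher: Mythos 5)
Your proposal is correct and follows essentially the same route as the paper: split at $T_1$, apply Theorem~\ref{theo:CV_concave} on each slab, propagate the uniform strict concavity of $\ueps(T_1,\cdot)$ (via the Lorz--Mirrahimi--Perthame mechanism) to verify hypotheses \eqref{hyp:conc_D2u0}--\eqref{hyp:conc_I0} at the restart, use $\Gamma_{T_1}=\{\overline x(T_1)\}$, and then branch on the sign of $R_2(\overline x(T_1),0)$ to invoke Theorem~\ref{theo:critere_asympt}~ii)/iii) or i) accordingly. The only difference is one of exposition: you make explicit the uniform $D^2\ueps$ estimate carrying across $T_1$, which the paper packages implicitly into what ``Theorem~\ref{theo:CV_concave} yields.''
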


We illustrate numerically this situation in Section \ref{sec:numerics}. The plan of the paper is as follows. In Section \ref{sec:cv_ueps}, we prove Theorem \ref{theo:CV_ueps} that shows the convergence up to a subsequence of $(\ueps)_{\varepsilon}$ and $(\neps)_{\varepsilon}$. In Section \ref{sec:critere}, we prove Theorem \ref{theo:critere_asympt}, the main contribution of the paper, that gives criteria for the asymptotic fate of the population. The proof of Theorem \ref{theo:CV_concave} is given in Section \ref{sec:concave}, and mainly focuses on the description of the situation of extinction. Finally, in Section \ref{sec:numerics}, we perform numerical simulations of the model, for time-homogenous and piecewise constant environments.

\section{Proof of Theorem \ref{theo:CV_ueps}}\label{sec:cv_ueps}
In this section, we give the proof of Theorem \ref{theo:CV_ueps} on the convergence up to a subsequence of $(\ueps)_{\varepsilon}$ and $(\neps)_{\varepsilon}$. In the following, we detail important bounds and regularity results that allow to pass to the limit and whose proofs differ from previous works due to our weaker assumptions. The proofs of the convergence of $(\ueps)_{\varepsilon}$ and $(\neps)_{\varepsilon}$, and of the identification of the limiting problem follow classical steps and are detailed in Appendix \ref{app:cv_u}.

\subsection{Preliminary estimates}
We first establish estimates on $\Ieps$ and $\rhoeps$.

\begin{prop}\label{prop:estimates_ieps}
Assume \eqref{hyp:psi}-\eqref{hyp:R_Wloc}, and that $0 \leq \Ieps(0) \leq I_M + C \varepsilon^2$. Then, $\exists \varepsilon_0>0$ such that $\forall \varepsilon <\varepsilon_0$, $\forall t \in \R_+$,
\begin{equation}\label{estimate_Ieps}
 0 \leq \Ieps(t) \leq 2I_M\,.
\end{equation}
Moreover, the solution $\neps$ is nonnegative for all time and there exists $\rho_M>0$ such that $\forall \varepsilon < \varepsilon_0$, $\forall t \in R_+$,
\begin{equation}\label{estimate_rhoeps}
0\leq \rho_{\varepsilon}(t)= \intRd \neps(t,x)\dx x \leq \rho_M\,.
\end{equation}
\end{prop}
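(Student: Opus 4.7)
The plan is as follows. First I would establish the nonnegativity of $\neps$ by applying the weak maximum principle to the linear parabolic equation $\partial_t \neps - \eps \Delta \neps = \frac{1}{\eps} R(x, \Ieps(t))\, \neps$, treating the coefficient $R(\cdot, \Ieps(t))/\eps$ as a given time-dependent function. Because $R$ may grow quadratically in $|x|$ by \eqref{hyp:supR}, one works in a class of functions with suitable decay at infinity: the Gaussian-type decay is ensured at $t=0$ by \eqref{hyp:n0} and is propagated by comparison with the fundamental solution of $\partial_t - \eps\Delta + K_3 |x|^2$, as alluded to in Remark \ref{rema:noyau_chaleur}. This same decay justifies the manipulations used to get an equation for $\Ieps$.

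Next, I multiply \eqref{eq:neps} by $\psi(x)$, integrate over $\Rd$, and integrate the Laplacian by parts (legitimate by \eqref{hyp:psi} and the decay of $\neps$), which gives
\begin{equation*}
\frac{d \Ieps}{dt}(t) = \frac{1}{\eps} \intRd \psi(x)\, R(x,\Ieps(t))\, \neps(t,x)\, \dx x + \eps \intRd \Delta \psi(x)\, \neps(t,x)\, \dx x.
\end{equation*}
In particular $\Ieps$ is continuously differentiable in $t$, a fact I use in the barrier argument below.

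The upper bound on $\Ieps$ is then obtained by a barrier argument at the level $2I_M$. Since $\Ieps(0) \leq I_M + C\eps^2 < 2I_M$ for $\eps$ small enough, if the inequality $\Ieps \leq 2I_M$ failed there would exist a first time $t^\star > 0$ with $\Ieps(t^\star) = 2I_M$ and $\tfrac{d\Ieps}{dt}(t^\star) \geq 0$. Assumptions \eqref{hyp:maxR} and \eqref{hyp:dRI} combine to give the pointwise estimate $R(x, 2I_M) \leq R(x, I_M) - K_1^{-1} I_M \leq -I_M/K_1$ uniformly in $x$, so the first term of the identity above at $t^\star$ is bounded by $-2 I_M^2/(\eps K_1)$. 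For the second term, the elementary bound $\rhoeps(t^\star) \leq \Ieps(t^\star)/\psi_m = 2I_M/\psi_m$ (using only $\psi \geq \psi_m$) together with $\|\Delta \psi\|_\infty < \infty$ yields a bound of order $\eps$. For $\eps$ small enough the $1/\eps$ negative contribution dominates the $\eps$ positive one, so $\tfrac{d\Ieps}{dt}(t^\star) < 0$, contradicting $\tfrac{d\Ieps}{dt}(t^\star) \geq 0$. This proves $\Ieps(t) \leq 2I_M$ for all $t \geq 0$ and $\eps \leq \eps_0$, where $\eps_0$ is chosen to satisfy both the initial-data constraint $I_M + C\eps^2 < 2I_M$ and the barrier condition. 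The bound on $\rhoeps$ is then immediate: $\rhoeps(t) = \intRd \neps \, \dx x \leq \Ieps(t)/\psi_m \leq 2I_M/\psi_m$, so one sets $\rho_M := 2I_M/\psi_m$.

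The main obstacle is not the barrier algebra but the rigorous justification of the integration by parts $\intRd \psi \Delta \neps\, \dx x = \intRd (\Delta \psi) \neps\, \dx x$ and of $\tfrac{d}{dt}\intRd \psi \neps\, \dx x = \intRd \psi\, \partial_t \neps\, \dx x$ on the unbounded domain $\Rd$, given that $R$ may grow like $|x|^2$. The cleanest approach is to first propagate a Gaussian supersolution of the linearized equation with the quadratic sink $-K_3|x|^2$ in order to control $\neps$ and $\nabla \neps$ at infinity; alternatively one can regularize $\neps^0$ by compactly supported initial data, establish the bounds for the approximations, and pass to the limit.
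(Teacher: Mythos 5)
Your proposal is correct and follows the same route the paper relies on via the citation to \cite{Perthame2008}: derive the ODE for $\Ieps$ by testing \eqref{eq:neps} with $\psi$, and run a barrier argument at the level $2I_M$ using \eqref{hyp:maxR}--\eqref{hyp:dRI} to make the $O(1/\eps)$ drift dominate the $O(\eps)$ term from $\Delta\psi$. You also rightly flag the only point needing extra care beyond the bounded-$R$ setting of \cite{Perthame2008}, namely the decay at infinity justifying the integration by parts, exactly as Remark \ref{rema:noyau_chaleur} suggests.
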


The proof derives from \cite{Perthame2008}.


\subsection{Regularity of \texorpdfstring{$(\ueps)_{\varepsilon}$}{(u)eps}}
We prove now a regularity result for $\ueps$. For $T>0$, let us define $D(T)=\sqrt{A_2 + (B_2^2+K_0) T}$, and the additional sequence $(\veps := \sqrt{2D(T)^2 - \ueps})_{\varepsilon}$.

\begin{prop}\label{prop:regularizing_effect}
Assume \eqref{hyp:psi}-\eqref{hyp:n0}. Then, 
\begin{enumerate}[label=\bfseries \roman*)]
\item the sequences $(\ueps)_{\varepsilon}$ as well as $(\veps)_{\varepsilon}$ are locally uniformly bounded: there exist positive constants $F_i$, $i\in \{ 1,...,4 \}$, such that for $(t,x) \in [0,T] \times \Rd$, 
\begin{equation}\label{eq:bounds_ueps}
-F_1 t - F_2 |x|^2 \leq \ueps(t,x) \leq F_3 t -F_4 |x|\,.
\end{equation} 
Moreover, $v_\eps$ is well-defined and it satisfies, for $(t,x) \in [0,T] \times \Rd$,
\begin{equation}
\label{bound-ve}
D(T)\leq v_\eps(t,x) \leq C|x|+B(T),
\end{equation} 
with $B$ and $C$ positive constants.
\item for all $t_0>0$, $(\ueps)_{\varepsilon}$ is locally equicontinuous in time and $(\veps)_{\varepsilon}$ is locally uniformly Lipschitz in $[t_0,T]\times \Rd$. In particular, for any $L>1$ and  $(t,x) \in [t_0,T]\times B_L(0)$,
\begin{equation}\label{eqtheo:nablav}
|\nabla \veps | \leq C(T,L) \left( 1 + \frac{1}{\sqrt{t_0}} \right)\,.
\end{equation} 
\item if $(\nabla \ueps^0)_{\varepsilon}$ is locally uniformly bounded, then $(\ueps)_{\varepsilon}$ is locally equicontinuous in time and $(\veps)_{\varepsilon}$ is locally uniformly Lipschitz on $[0,T]\times \Rd$. In particular, for  any $L>1$, there exists $a(L)\leq 1$ such that for any $(t,x) \in [0,T]\times B_L(0)$,
\begin{equation*}
|\nabla \veps|(t,x) \leq C(T,L) \left(1+\frac{1}{\sqrt{t+a(L)}} \right)\,.
\end{equation*}
\end{enumerate}
\end{prop}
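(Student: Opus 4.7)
\medskip

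\noindent\textbf{Plan of proof.} The strategy is the classical one for this kind of Hopf--Cole Hamilton--Jacobi problem: first obtain global $L^\infty$ bounds on $\ueps$ by constructing sub- and supersolutions, then transfer these bounds to $\veps$, derive the equation satisfied by $\veps$, and finally run a Bernstein-type argument on $\veps$ (which, unlike $\ueps$, enjoys a genuinely parabolic equation with a favourable quadratic term in $|\nabla \veps|^2$).

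\medskip

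\noindent\textbf{Step 1: $L^\infty$ bounds on $\ueps$ (point (i)).} Proposition \ref{prop:estimates_ieps} gives $0\le \Ieps(t)\le 2I_M$, so Assumption \eqref{hyp:supR} yields $-K_2-K_3|x|^2\le R(x,\Ieps(t))\le K_0$ throughout $[0,T]\times\Rd$. For the upper bound I will check that the function
\[
\overline\phi(t,x)=A_2 - B_2|x|+(B_2^2+K_0)t
\]
is a viscosity supersolution of \eqref{eq:ueps}: away from the origin $|\nabla\overline\phi|^2=B_2^2$ and $-\eps\Delta\overline\phi=\eps B_2(d-1)/|x|\ge 0$, while at $x=0$ the function has a local maximum in $x$, so the viscosity supersolution test is vacuous. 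Since $\overline\phi(0,\cdot)\ge\ueps^0$ by \eqref{hyp:n0}, the comparison principle for \eqref{eq:ueps} yields the upper bound with $F_3=B_2^2+K_0$, $F_4=B_2$. For the lower bound I try a quadratic subsolution
\[
\underline\phi(t,x)=-F_0-F_1 t-F_2|x|^2,
\]
and I check that for $F_2\ge\max(B_1,\sqrt{K_3}/2)$, $F_0\ge A_1$ and $F_1$ large enough (depending on $K_2$ and $\eps_0$) one has $\partial_t\underline\phi-\eps\Delta\underline\phi\le|\nabla\underline\phi|^2-K_2-K_3|x|^2\le|\nabla\underline\phi|^2+R$ and $\underline\phi(0,\cdot)\le\ueps^0$; comparison gives \eqref{eq:bounds_ueps}. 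The bounds on $\veps=\sqrt{2D(T)^2-\ueps}$ then follow directly: the upper bound on $\ueps$ gives $2D(T)^2-\ueps\ge D(T)^2+B_2|x|\ge D(T)^2$, hence $\veps$ is well-defined and $\veps\ge D(T)$; the quadratic lower bound on $\ueps$ gives $2D(T)^2-\ueps\le C_1+C_2|x|^2$, whence \eqref{bound-ve}.

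\medskip

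\noindent\textbf{Step 2: equation for $\veps$ and Bernstein-type gradient estimate (point (ii)).} A direct computation from $\ueps=2D(T)^2-\veps^2$ turns \eqref{eq:ueps} into
\[
\partial_t\veps-\eps\Delta\veps=\Bigl(\frac{\eps}{\veps}-2\veps\Bigr)|\nabla\veps|^2-\frac{R(x,\Ieps)}{2\veps}.
\]
Because $\veps\ge D(T)$, for $\eps\le\eps_0$ small the coefficient $\eps/\veps-2\veps$ is bounded above by $-\veps\le -D(T)<0$: this is the crucial negative quadratic nonlinearity that drives the Bernstein argument. Setting $w=|\nabla\veps|^2$, differentiating the equation and multiplying by $\nabla\veps$ gives a parabolic inequality of the schematic form
\[
\partial_t w-\eps\Delta w+2\veps\,w^2\le C_1(L)\,w+C_2(L),
\]
on each ball $B_L$, where the forcing term $\nabla(R/\veps)\cdot\nabla\veps$ is controlled using \eqref{hyp:supR}, \eqref{hyp:R_Wloc} and the bounds on $\veps$. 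Testing $w\,\chi(x)\,\theta(t)$ with a smooth spatial cutoff $\chi$ supported in $B_{L+1}$ and a time weight $\theta(t)=\min(1,t/t_0)$ (or $\theta(t)=t/(t+a(L))$ in case (iii)) and applying the maximum principle, I obtain the pointwise bound \eqref{eqtheo:nablav}; the time weight is precisely what accounts for the $1/\sqrt{t_0}$ factor and the possibility to push $t_0\to 0$ when $\nabla\ueps^0$ is bounded, as in (iii). Equicontinuity of $\ueps$ in time then follows from the equation \eqref{eq:ueps} rewritten as $\partial_t\ueps=\eps\Delta\ueps+|\nabla\ueps|^2+R$, integrating against a test function and using $|\nabla\ueps|=2\veps|\nabla\veps|$ together with the bounds already established.

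\medskip

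\noindent\textbf{Main obstacle.} The delicate point is the Bernstein computation: $R$ is allowed to be unbounded and only $W^{2,\infty}_{\mathrm{loc}}$ (Assumption \eqref{hyp:R_Wloc}), so the forcing term $\nabla(R/\veps)\cdot\nabla\veps$ must be bounded locally by a combination of $w$ and lower-order terms, which requires a careful use of the quadratic upper bound $2\veps w^2$ produced by the equation on $\veps$ (here the sign of $\eps/\veps-2\veps$ is essential). Handling the cutoff $\chi$ generates additional terms $|\nabla\chi||\nabla w|$ which I absorb via Young's inequality into $\eps|\nabla w|^2/w$-type terms arising from the diffusion of $w$. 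Point (iii) only requires replacing the time weight $\theta(t)=\min(1,t/t_0)$ by $\theta(t)=t/(t+a(L))$, with $a(L)$ chosen small enough so that the contribution of the initial Lipschitz bound on $\ueps^0$ (hence on $\veps^0$) remains controlled at $t=0$.
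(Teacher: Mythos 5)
Your proposal follows essentially the same route as the paper's own proof: identical sub/supersolutions for the $L^\infty$ bounds on $\ueps$ (linear-in-$|x|$ above, quadratic-in-$|x|$ below), the same transform $\veps=\sqrt{2D(T)^2-\ueps}$ leading to the same equation $\partial_t\veps-\eps\Delta\veps=(\eps/\veps-2\veps)|\nabla\veps|^2-R/(2\veps)$, and a Bernstein estimate on $\nabla\veps$ built on the strict negativity of $\eps/\veps-2\veps$ and the lower bound $\veps\ge D(T)$. The only implementation difference is in how the Bernstein step is localized: you truncate with a cutoff $\chi(x)\theta(t)$ and absorb the cutoff errors by Young, whereas the paper compares $|\nabla\veps|$ directly with an explicit barrier $z(t,x)=\tfrac{1}{2\sqrt{t}}+\tfrac{A_4L^2}{L^2-|x|^2}+\theta(T,L)$, whose blow-up at $t\to 0^+$ and at $|x|\to L^-$ makes the comparison automatic on the parabolic boundary, with no cutoff bookkeeping; this is convenient here because the only control available on $\nabla_x R$ is the local bound $\|\nabla_x R\|_{L^\infty(B_L)}\le\overline K_L$ from \eqref{hyp:R_Wloc}. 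If you carry through your cutoff version, fix a few details: the coefficient on $w^2$ in your schematic inequality should be $2(\eps/\veps^2+2)\ge 4$, not $2\veps$ (only its positivity matters, so the argument is unharmed); the cleaner smooth time weight is $\theta(t)=t$, giving $|\nabla\veps|\lesssim t^{-1/2}$ and hence the stated $1/\sqrt{t_0}$ bound on $[t_0,T]$, with $\theta(t)=t+a(L)$ for part~(iii), rather than the nonsmooth $\min(1,t/t_0)$; and for the time equicontinuity your mollifier trade-off works given the spatial gradient bound, but the paper adapts the more robust Barles--Biton--Ley doubling barrier $\zeta(t,y)=\ueps(s,x)+\eta+A|x-y|^2+B(t-s)$, which requires only uniform spatial continuity.
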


\begin{proof}
The proof follows arguments used in \cite{Barles2009}, see also \cite{Lorz2011},\cite{SF_SM}. The novelty here lies in the weaker assumptions on $R$, that is neither bounded nor concave. As a consequence, the Lipschitz bounds are more difficult to obtain. We detail here the proof of these bounds, while the rest of the proof is postponed to Appendix \ref{appendix:prop_regularizing}.

\paragraph{Regularizing effect in space.} Neglecting the subscript $\varepsilon$ here for the simplification of notations, denote $v = \sqrt{2D(T)^2 - u}$, that satisfies on $[0,T]\times \Rd$
\begin{equation}\label{eq:reg_v}
\partial_t v - \varepsilon \Delta v - \left[\varepsilon \frac{1}{v} -2v \right] |\nabla v |^2 = - \frac{R(x,I)}{2v}\,.
\end{equation}
Write $p=\nabla v$. Differentiating \eqref{eq:reg_v} with respect to $x$, multiplying by $\frac{p}{| p|}$, we obtain that

\begin{equation}\label{eq:reg_p2_bis}
\begin{split}
\partial_t |p| - \varepsilon \Delta |p| - 2\left[\varepsilon \frac{1}{v} -2v\right] p \cdot \nabla |p| + \left[ \varepsilon \frac{1}{v^2} +2 \right] |p|^3  -  \frac{1}{2v^2}R(x,I)|p| + \frac{1}{2v} \nabla_x R \cdot \frac{p}{|p|}\leq0\,.
\end{split}
\end{equation}
Now, we use Assumption \eqref{hyp:R_Wloc} to obtain a local lower-bound on $ \nabla_x R$: for any $L>0$, there exists $\overline{K}_L>0$ such that for any $0\leq I\leq 2 I_M$, $\parallel \nabla_x R(\cdot,I) \parallel_{L^\infty(B_L(0))} \leq \overline{K}_L$. We deduce thanks to \eqref{bound-ve} that,  on $[0,T]\times B_L(0)$,
$$ \frac{1}{2v} \nabla_x R \cdot \frac{p}{|p|} \geq -\frac{\overline{K}_{L}}{2v}> -\frac{\overline{K}_{L}}{2D(T)} .$$
Moreover, thanks to \eqref{hyp:supR} and to \eqref{bound-ve},  we have that
\begin{displaymath}
\begin{aligned}
&- \frac{1}{2v^2} R |p| > - \frac{K_0}{2v^2} |p| > - \frac{K_0}{2D(T)^2} |p|.
\end{aligned}
\end{displaymath}
As a consequence, for some constants $A_3>0$, $D_1(T) >0$, using the bounds \eqref{bound-ve} on $\veps$, and for $\theta(T,L)$ large enough, we have that $\forall (t,x) \in [0,T] \times B_L(0)$,
\begin{equation}\label{ineq:appendix_regularizing}
\partial_t |p| - \varepsilon \Delta |p| - \left[A_3 |x| + D_1(T) \right] \left| |p|\cdot \nabla |p| \right|+ 2 (|p| - \theta(T,L))^3 \leq 0.
\end{equation}
We are now going to find a strict supersolution for Equation \eqref{ineq:appendix_regularizing} to obtain an upper-bound for $|\veps|$ on $[0,T]\times B_L(0)$. For $A_4>0$ to be determined later, and $(t,x) \in (0,T]\times B_{L}(0)$, define 
\begin{equation*}
z(t,x) =\frac{1}{2 \sqrt{t}} + \frac{A_4 L^2}{(L^2 - |x|^2)} + \theta(T,L).
\end{equation*}
We prove that for $A_4$ large enough, $z$ is a strict supersolution of \eqref{ineq:appendix_regularizing} in $(0,T] \times B_L(0)$. Indeed, we compute 
\small
\begin{equation*}
\begin{aligned}
&
\partial_t z - \varepsilon \Delta z - [A_3 |x| + D_1(T)] z \nabla z + 2 (z - \theta(T,L))^3 =  - \frac{1}{4 t\sqrt{t} } - \varepsilon \left(\frac{2A_4 L^2 d}{(L^2 - |x|^2)^2} + \frac{8A_4 L^2 |x|^2}{(L^2 - |x|^2)^3} \right) \\
& \hspace{1.5 cm} - [A_3 |x| + D_1(T)] \left( \frac{1}{2 \sqrt{t}} + \frac{A_4 L^2}{L^2 - |x|^2} + \theta(T,L) \right) \frac{2A_4 L^2 x}{(L^2 - |x|^2)^2} + 2 \left( \frac{1}{2 \sqrt{t}} + \frac{A_4 L^2}{L^2 - |x|^2} \right)^3 
\\
&  
\geq- \varepsilon \left(\frac{2A_4 L^2 d}{(L^2 - |x|^2)^2} + \frac{8A_4 L^4}{(L^2 - |x|^2)^3} \right) 
- [A_3 L + D_1(T)]\left(  \frac{1}{2 \sqrt{t}} + \frac{A_4 L^2}{L^2 - |x|^2} + \theta(T,L) \right) \frac{2A_4 L^3}{(L^2 - |x|^2)^2}\\
& \hspace{10cm} + \frac{3}{ \sqrt{t}  } \frac{A_4^2 L^4}{ (L^2 - |x|^2)^2} +2 \frac{A_4^3 L^6}{ (L^2 - |x|^2)^3} 
  \,,
\end{aligned}
\end{equation*}
\normalsize
using that $|x| \leq L$.
It can be shown that for $L>1$, $\varepsilon \leq 1$  and $A_4 = A_4(T)$ large enough, the right-hand side of the inequality is strictly positive, so that $z$ is a strict supersolution of \eqref{ineq:appendix_regularizing} in $(0,T] \times B_L(0)$ and for $\varepsilon \leq 1$. We next prove that 
\begin{displaymath}
|p(t,x)| \leq z(t,x) \quad \text{in } (0,T] \times B_L(0)\,.
\end{displaymath}
First, note that for $|x| \rightarrow L$ or $t \rightarrow 0$, $z(t,x)$ goes to infinity, so that $|p(t,x)|- z(t,x)$ attains its maximum at an interior point of $(0,T] \times B_L(0)$. Define now $t_m \leq T$ such that 
\begin{displaymath}
\underset{(t,x) \in (0,t_m] \times B_L(0)}{\max} \{ |p(t,x)| - z(t,x) \} =0.
\end{displaymath}
If such $t_m$ does not exist, then the result is proved. Otherwise, take $x_m$ such that $\forall (t,x) \in (0,t_m)\times B_L(0)$, 
\begin{displaymath}
|p(t,x)| - z(t,x) \leq |p(t_m,x_m)| - z(t_m,x_m)=0.
\end{displaymath}
Then, we have at this point that 
\begin{displaymath}
\begin{aligned}
\partial_t ( |p(t_m,x_m) |- z(t_m,x_m)) \geq 0,&\quad - \Delta (|p(t_m,x_m)| - z(t_m,x_m)) \geq 0,\\  
|p(t_m,x_m)| \nabla |p(t_m,x_m)| &= z(t_m,x_m) \nabla z(t_m,x_m).
\end{aligned}
\end{displaymath}
As a consequence, since $|p|$ (resp. $z$) is a subsolution (resp. strict supersolution) of \eqref{ineq:appendix_regularizing}, we obtain from this that 
\begin{displaymath}
2(|p(t_m,x_m)|-\theta(T,L))^3 - 2(z(t_m,x_m)-\theta(T,L))^3 <0\,,
\end{displaymath}
and we deduce that
\begin{displaymath}
|p(t_m,x_m)| < z(t_m,x_m)\,,
\end{displaymath}
in contradiction with the definition of $(t_m,x_m)$. Therefore, in $(0,T]\times B_L(0)$, for $L>1$, we have that
\begin{displaymath}
|p(t,x)| \leq z(t,x) = \frac{1}{2\sqrt{t}} + \frac{A_4(T) L^2}{(L^2 - |x|^2)} + \theta(T,L).
\end{displaymath}
Finally, we deduce that, on $( t_0,T]\times B_{\frac{L}{2}}(0)$,  
\begin{equation*}
|p(t,x)| \leq C(T,L) \left(1+\frac{1}{\sqrt{t_0}} \right).
\end{equation*}

\paragraph{Additional assumption on the initial condition.} We consider now that $(\nabla \ueps^0)_{\varepsilon}$ is locally uniformly bounded such that for any $L>1$, there exists $a(L)$ small enough such that 
$$
|\nabla u_\eps^0| \leq \frac{1}{2\sqrt{a(L)}},\qquad \text{in $B_L(0)$}.
$$
 For $A_5>0$ a large constant, define
\begin{equation*}
z(t,x) =\frac{1}{2 \sqrt{t+a(L)}} + \frac{A_5 L^2}{(L^2 - |x|^2)} + \theta(T,L).
\end{equation*}
We can prove similarly to the previous proof that for $A_5$ large enough, $z$ is a strict supersolution of \eqref{ineq:appendix_regularizing} in $[0,T] \times B_L(0)$. Moreover, as $|x|\to L$, $z(t,x)$ goes to infinity and for all $x\in B_L(0)$, we have that
$$
|p(0,x)| \leq z(0,x). 
$$
One can then follow similar arguments as in the previous part to show that, in $[0,T]\times B_L(0)$,
$$
|p(t,x)|\leq z(t,x),
$$ 
and hence, in $[0,T]\times B_{\frac{L}{2}}(0)$,
\begin{equation}
|p(t,x)| \leq C(T,L) \left(1+\frac{1}{\sqrt{t+a(L)}} \right)\,,
\end{equation}
\end{proof}

\section{Proof of Theorem \ref{theo:critere_asympt}}\label{sec:critere}
In this part, we prove Theorem \ref{theo:critere_asympt}. We start by treating the asymptotic persistent case $i)$. Then, we deal with the asymptotic extinction of the population on a time interval $ii)$, before giving the proof in the case of the asymptotic extinction at a time point $iii)$.

\subsection{Asymptotic persistence}
We prove now the first statement of Theorem \ref{theo:critere_asympt}. Take $T<\infty$, and consider Assumptions \eqref{hyp:psi}-\eqref{hyp:n0}, as well as \eqref{eq:cond_pers}.

First, we show that in this case, there exists a strictly positive uniform lower bound for $(\Ieps)_{\varepsilon}$. Then, we use this property to show that $\Ieps$ has bounded variations, which leads to the convergence up to a subsequence of $(\Ieps)_{\varepsilon}$.

\subsubsection{Strictly positive uniform lower bound on \texorpdfstring{$(\Ieps)_{\varepsilon}$}{Ie}}

In this section we aim at proving \eqref{eq:mino_I}. We start by proving an inequality related to the time derivative of $ \Ieps(t)$. Let us remark that 
$$\frac{d}{dt}I_\varepsilon(t)=\varepsilon\int\Delta \psi(x)n_\varepsilon(t,x)dx +J_\varepsilon(t),$$
where
\begin{equation}
\label{def:Jeps}
J_\varepsilon(t)=\frac{1}{\varepsilon}\int \psi(x)n_\varepsilon(t,x)R(x,I_\varepsilon(t)) \dx x\,.
\end{equation}
We now consider the evolution of the negative part of $J_\varepsilon$, defined by $(\Jeps(t))_- := \max(0,- \Jeps(t))$.
\begin{lemme}\label{lemme:ineq_dJeps}
Under assumptions \eqref{hyp:psi}-\eqref{hyp:n0}, there exist two positive constants $G_1$ and $G_2$ such that, for $\eps \leq 1$,
\begin{equation}\label{ineq:Jeps_0}
\left(J_\varepsilon(0)\right)_-\le\frac{G_1}{\varepsilon}\,,
\end{equation}  
and
\begin{equation}\label{ineq:dJeps_neg}
\begin{aligned}
\frac{\dx}{\dx t } (\Jeps(t))_- 
& \leq  G_2 - \frac{K_1^{-1}}{\varepsilon} \Ieps(t) (\Jeps(t))_- \,.
\end{aligned}
\end{equation}
\end{lemme}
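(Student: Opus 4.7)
For the initial bound \eqref{ineq:Jeps_0}, I would apply the lower bound $R(x,I)\geq -K_2-K_3|x|^2$ from \eqref{hyp:supR} together with $\psi\leq\psi_M$ from \eqref{hyp:psi}, reducing the estimate to controlling $\int n_\varepsilon^0\,\dx x$ and $\int n_\varepsilon^0|x|^2\,\dx x$ uniformly in $\varepsilon$. The zeroth moment is bounded by $\rho_M$ via Proposition \ref{prop:estimates_ieps}. The second moment is handled by splitting the integration at $R_0:=2A_2/B_2$: on $\{|x|\leq R_0\}$ it is bounded by $R_0^2\rho_M$, while on $\{|x|\geq R_0\}$ the upper bound \eqref{hyp:n0} yields $n_\varepsilon^0(x)\leq e^{-B_2|x|/(2\varepsilon)}$, whose second moment is $O(\varepsilon^{d+2})$ after the rescaling $y=B_2 x/(2\varepsilon)$. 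Collecting these bounds gives $J_\varepsilon(0)\geq -G_1/\varepsilon$ for a constant $G_1>0$ independent of $\varepsilon\leq 1$.

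For \eqref{ineq:dJeps_neg}, I would differentiate \eqref{def:Jeps} in time and use \eqref{eq:neps} for $\partial_t n_\varepsilon$. Integrating by parts the Laplacian onto $\psi R$ yields
\begin{equation*}
\frac{d}{dt}J_\varepsilon(t)=\int\Delta(\psi R)\,n_\varepsilon\,\dx x+\frac{1}{\varepsilon^2}\int\psi\, n_\varepsilon\, R^2\,\dx x+\frac{I_\varepsilon'(t)}{\varepsilon}\int\psi\, n_\varepsilon\,\partial_I R\,\dx x.
\end{equation*}
The first term is bounded below by $-K_3\rho_M$ via \eqref{hyp:DeltaPsiR} and Proposition \ref{prop:estimates_ieps}, the second is nonnegative and will be dropped. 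In the third term I substitute $I_\varepsilon'(t)=\varepsilon\int\Delta\psi\,n_\varepsilon\,\dx x+J_\varepsilon(t)$. The cross contribution $\bigl(\int\Delta\psi\,n_\varepsilon\bigr)\bigl(\int\psi\,n_\varepsilon\,\partial_I R\bigr)$ is uniformly bounded by a constant $C>0$ using \eqref{hyp:psi}, \eqref{hyp:dRI}, \eqref{estimate_Ieps} and \eqref{estimate_rhoeps}.

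The key point is the remaining contribution $\tfrac{J_\varepsilon(t)}{\varepsilon}\int\psi\,n_\varepsilon\,\partial_I R\,\dx x$: on $\{J_\varepsilon<0\}$, the sign information $\partial_I R\leq -K_1^{-1}$ from \eqref{hyp:dRI} combines with the negative sign of $J_\varepsilon$ to give
\begin{equation*}
\tfrac{J_\varepsilon(t)}{\varepsilon}\int\psi\,n_\varepsilon\,\partial_I R\,\dx x\;\geq\;-\tfrac{K_1^{-1}}{\varepsilon}J_\varepsilon(t)\,I_\varepsilon(t)\;=\;\tfrac{K_1^{-1}}{\varepsilon}(J_\varepsilon(t))_-\,I_\varepsilon(t),
\end{equation*}
which is precisely the term appearing in \eqref{ineq:dJeps_neg}. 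Assembling everything and using $\frac{d}{dt}(J_\varepsilon)_-=-\frac{d}{dt}J_\varepsilon$ on this set produces the inequality with $G_2:=K_3\rho_M+C$. On $\{J_\varepsilon>0\}$ one has $(J_\varepsilon)_-\equiv 0$ locally, so both sides vanish (the RHS being nonnegative), and a.e. differentiability of $t\mapsto\max(0,-J_\varepsilon(t))$ handles the transition set $\{J_\varepsilon=0\}$.

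The main obstacle I anticipate is the rigorous justification of the integration by parts and of differentiation under the integral sign. Both rely on fast spatial decay of $n_\varepsilon(t,\cdot)$: the upper bound in \eqref{hyp:n0} propagates via \eqref{eq:ueps} and the estimates of Proposition \ref{prop:regularizing_effect} to an upper bound for $n_\varepsilon(t,x)$ that decays faster than any polynomial in $|x|$, uniformly for $\varepsilon\leq 1$ and $t$ in any compact interval. This absorbs the at-most-polynomial growth in $x$ of $\psi R$ and $\Delta(\psi R)$ coming from \eqref{hyp:supR}--\eqref{hyp:DeltaPsiR}, ensuring that all integrals considered are finite, differentiable, and that boundary terms in the integration by parts vanish.
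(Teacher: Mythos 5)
Your proposal is correct and follows essentially the same route as the paper: for \eqref{ineq:Jeps_0} the splitting of the integral using the exponential decay furnished by \eqref{hyp:n0} is precisely the paper's argument (the paper splits at an arbitrary $M$ and bounds the inner integral via $I_\eps(0)\leq I_M$ where you use $\rho_M$, a cosmetic difference), and for \eqref{ineq:dJeps_neg} you differentiate $J_\eps$, integrate the Laplacian by parts onto $\psi R$, substitute $I_\eps'=\eps\int\Delta\psi\,n_\eps+J_\eps$, bound the cross term and $\int\Delta(\psi R)n_\eps$ by constants, and exploit $\partial_I R\leq -K_1^{-1}$ together with the sign of $J_\eps$ on $\{J_\eps<0\}$, exactly as the paper does. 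The closing remark on decay of $n_\eps$ to justify integration by parts and differentiation under the integral correctly identifies a technical point the paper leaves implicit.
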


\begin{proof}
We start by deriving \eqref{ineq:Jeps_0}. From \eqref{hyp:supR},\eqref{hyp:I0} and \eqref{hyp:n0}, we have that
$$R(x,I_\varepsilon(0))\ge -K_2-K_3|x|^2, \quad I_\eps(0) \leq I_M, \quad \text{ and } n_\varepsilon(0,x)\le e^{\frac{A_2-B_2|x|}{\varepsilon}}.$$
For any positive constant $M$, we obtain
$$\begin{aligned}
\varepsilon J_{\varepsilon}(0)&=\int_{\{|x|<M\}} \psi(x)n_\varepsilon(x,0)R(x,I_\varepsilon(0))\dx x+\int_{\{|x|\ge M\}}  \psi(x)n_\varepsilon(x,0)R(x,I_\varepsilon(0)) \dx x\,,\\
&\ge-I_M(K_2+K_3M^2)-\psi_M \int_{\{|x|\ge M\}}  e^{\frac{A_2-B_2 |x|}{\varepsilon}}(K_2+K_3|x|^2)\dx x\,. \end{aligned}$$
The second term of the right-hand-side is small for $\eps\leq 1$ and $L$ large enough and we obtain \eqref{ineq:Jeps_0}. To obtain \eqref{ineq:dJeps_neg}, we compute the time derivative of $\Jeps$, to get 
\begin{equation*}
\begin{split}
\frac{\dx}{\dx t }\Jeps(t) 
= \underbrace{\frac{1}{\varepsilon^2} \intRd \psi R^2 \neps \dx x}_{\geq 0} + \intRd \psi R \Delta \neps \dx x + \left(\intRd \Delta \psi \neps \dx x \right) \left( \intRd \psi\neps \frac{\partial R}{\partial I}   \dx x\right) \\
+ \frac{\Jeps(t)}{\varepsilon} \intRd \psi\neps \frac{\partial R}{\partial I}  \dx x.
\end{split}
\end{equation*}
Now, we find that $\intRd \psi R \Delta \neps \dx x = \intRd \Delta (\psi R) \neps \dx x \geq - K_3 \rhoeps (t) \geq - K_3\rho_M$ using \eqref{hyp:DeltaPsiR} and Proposition \eqref{prop:estimates_ieps}. Moreover, from \eqref{hyp:psi} and \eqref{hyp:dRI},
\begin{displaymath}
\begin{aligned}
-K_1 \Ieps(t) \leq  \intRd \psi\neps \frac{\partial R}{\partial I} \dx x \leq -K_1^{-1} \Ieps(t)\quad \text{ and } \quad \left| \intRd \Delta \psi \neps \dx x \right| \leq \rhoeps(t) \parallel \Delta \psi \parallel_{\infty} ,
\end{aligned}
\end{displaymath}
so that Proposition \eqref{prop:estimates_ieps} leads to
\begin{displaymath}
\left(\intRd \Delta \psi \neps \dx x \right) \left( \intRd \psi\neps \frac{\partial R}{\partial I}   \dx x\right)  \geq -K_1 \rhoeps(t) \Ieps(t) \parallel \Delta \psi \parallel_{\infty}  \geq -2 K_1 \parallel \Delta \psi \parallel_{\infty}I_M\rho_M.
\end{displaymath}
Therefore, we obtain that

\begin{equation*}
\begin{aligned}
\frac{\dx}{\dx t }\Jeps(t) 
&\geq -G + \frac{\Jeps(t)}{\varepsilon} \intRd \psi\neps \frac{\partial R}{\partial I}  \dx x \quad \text{ for some } G\geq 0.
\end{aligned}
\end{equation*}
Finally, considering the negative part of $\Jeps$, and using \eqref{hyp:dRI}-\eqref{hyp:supR} permit to conclude.
\end{proof}

Let us now define for $I\in (0, I_M)$,
\begin{equation}\label{def:OmegaI}
\Omega_I:= \{ x \in \Rd,\, R(x,I) > 0 \}\,,
\end{equation}
and for $\varepsilon>0$ and $t\in (0,T)$, 
\begin{equation*}
\widetilde{\Omega}_{\varepsilon}(t): =  \{ x \in \Rd,\, R(x,\Ieps(t)) > 0 \}\,.
\end{equation*}
We first prove the following Lemma.

\begin{lemme}\label{lemm:pers31}
Assume \eqref{hyp:psi}-\eqref{hyp:n0}, and \eqref{eq:cond_pers}. Then, there exists $\varepsilon_0>0$, $I_0 \in (0,I_M)$ and $I_*\in (0, I_0)$ such that for all $\varepsilon<\varepsilon_0$,
\begin{equation}\label{eq:lemme31}
\Ieps(0) \geq \int_{\Omega_{I_*}} \psi(x) \neps^0(x) \dx x \geq I_0>0\,.
\end{equation}
\end{lemme}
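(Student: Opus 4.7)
The plan is to exploit the persistence hypothesis \eqref{eq:cond_pers} to locate a small ball around a viable trait in the support of $n^0$, and then transfer the positive mass of $n^0$ near this point to $(\neps^0)_\eps$ via the weak convergence assumption. The first inequality $\Ieps(0) \geq \int_{\Omega_{I_*}} \psi\, \neps^0 \,\dx x$ is immediate from the nonnegativity of $\psi$ and $\neps^0$, so the work lies in the second one.

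The next step is to construct $I_*$ and a ball contained in $\Omega_{I_*}$. By \eqref{eq:cond_pers}, pick $x^* \in \text{Supp } n^0$ with $\alpha := R(x^*,0) > 0$. Assumption \eqref{hyp:R_Wloc} ensures $R(\cdot,0)$ is continuous near $x^*$, so there exists $r > 0$ such that
\begin{equation*}
R(x,0) \geq \alpha/2 \quad \text{for all } x \in \overline{B_r(x^*)}.
\end{equation*}
From \eqref{hyp:dRI} we have $|R(x,I) - R(x,0)| \leq K_1 I$ for $I \in [0, 2I_M]$, hence any $I_* \leq \alpha/(4K_1)$ yields $R(x, I_*) \geq \alpha/4 > 0$ on $\overline{B_r(x^*)}$, so $\overline{B_r(x^*)} \subset \Omega_{I_*}$.

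Now I invoke weak convergence. Choose a continuous test function $\varphi$ with compact support contained in $\overline{B_r(x^*)}$, satisfying $0 \leq \varphi \leq \psi_m$ and $\varphi(x^*) > 0$; by continuity, $\varphi$ is then strictly positive on some open neighborhood $U \ni x^*$. Since $x^* \in \text{Supp } n^0$, we have $n^0(U) > 0$, and therefore $\int \varphi \,\dx n^0 > 0$. Weak convergence $\neps^0 \rightharpoonup n^0$ tested against $\varphi$ gives $\int \varphi \, \neps^0 \,\dx x \to \int \varphi \,\dx n^0$, so for all $\eps < \eps_0$ small enough, $\int \varphi \, \neps^0 \,\dx x \geq \tfrac{1}{2} \int \varphi \,\dx n^0 =: I_0$. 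Since $\varphi \leq \psi_m \leq \psi$ and $\text{Supp } \varphi \subset \Omega_{I_*}$:
\begin{equation*}
\int_{\Omega_{I_*}} \psi(x) \neps^0(x) \,\dx x \geq \int \varphi(x)\, \neps^0(x) \,\dx x \geq I_0.
\end{equation*}

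Finally, I check the quantitative constraints $I_0 \in (0, I_M)$ and $I_* < I_0$. Since $\int \varphi\, \neps^0 \leq \Ieps(0) \leq I_M$ (by \eqref{hyp:I0} and Proposition \ref{prop:estimates_ieps}), passing to the limit gives $\int \varphi \,\dx n^0 \leq I_M$, so $I_0 \leq I_M/2 < I_M$. For the condition $I_* < I_0$, it suffices to replace the preliminary choice of $I_*$ by $\min(\alpha/(4K_1),\, I_0/2)$; shrinking $I_*$ only enlarges $\Omega_{I_*}$, so the lower bound is preserved. The main subtlety in the whole argument is verifying rigorously that $\int \varphi\, \dx n^0 > 0$ when all we know is $x^* \in \text{Supp } n^0$ — this is handled by picking $\varphi$ strictly positive on an open neighborhood of $x^*$ and using the definitional property that the support of a Radon measure has positive mass on every open neighborhood of each of its points.
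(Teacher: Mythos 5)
Your proof is correct and follows essentially the same strategy as the paper's: choose $x^*\in\text{Supp }n^0$ with $R(x^*,0)>0$, lower $I$ a little using \eqref{hyp:dRI} to get a ball around $x^*$ inside $\Omega_{I_*}$, and transfer the positive $n^0$-mass of that neighbourhood to $(\neps^0)_\eps$ via weak convergence, with $I_*$ shrunk at the end to get $I_*<I_0$ (valid since $I_1<I_2\Rightarrow\Omega_{I_2}\subset\Omega_{I_1}$). The one place where you are slightly more careful than the paper is the weak-convergence step: the paper tests directly against $\mathds{1}_{\Omega_{I_*}}\psi$, which is discontinuous and implicitly invokes lower semicontinuity of weakly convergent measures on the open set $\Omega_{I_*}$, whereas you replace this by a continuous bump function $0\le\varphi\le\psi_m$ supported in $\overline{B_r(x^*)}\subset\Omega_{I_*}$ with $\varphi(x^*)>0$, and check $\int\varphi\,\dx n^0>0$ from the definition of the support of a Radon measure. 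This avoids any Portmanteau-type subtlety and is a clean way to make the paper's step fully rigorous; otherwise the two arguments are the same.
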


\begin{proof}
The left-hand-side inequality always holds true. We prove that the assertion is true for some $I_*$ and $I_0$ in $(0,I_M)$. 
Consider $x_0 \in \text{Supp }n^0 \cap \{x,\, R(x,0)>0 \}$ which is non-empty thanks to \eqref{eq:cond_pers}. Then, we deduce that $R(x_0,0) >0$, and from \eqref{hyp:dRI} that there exists $I_*\in (0,I_M)$ such that $R(x_0,I_*) \geq\frac{R(x_0,0)}{2} >0$. As a consequence, 
\begin{equation*}
 \text{Supp }n^0 \cap \Omega_{I_*} \neq \emptyset\,,
\end{equation*}
and therefore $$\int_{\Omega_{I_*}} n^0(x)dx >0.$$ Since $\psi \geq \psi_m >0$, and $(\nepsk^0)_{k\rightarrow + \infty}$ converges weakly in the sense of measures towards $n^0$, we obtain that for every $\varepsilon$ small enough
$$\int_{\Omega_{I_*}}\psi(x)n_\varepsilon^0(x) \dx x \ge \frac{\psi_m}{2}\int_{\Omega_{I_*}}n^0(x) \dx x=I_0>0.$$
 Finally, we know from Assumption \eqref{hyp:dRI} that $\forall I_1<I_2$, we have $\Omega_{I_2} \subset \Omega_{I_1}$, leading to $$\int_{\Omega_{I_2}} \psi(x) \neps^0(x) \dx x < \int_{\Omega_{I_1}} \psi(x) \neps^0(x) \dx x.$$
As a consequence, taking a smaller $I_*$ does not change the inequality, and we can assume $I_*<I_0$.
\end{proof}

We now introduce two $\varepsilon$-dependent times that also depend on a small parameter $\delta< \frac{I_*}{2}$ that is fixed:
\begin{equation}
t_\varepsilon=\inf\{ t>0,I_\varepsilon(t)\le I_*-\delta\} \,,
\end{equation}
\begin{equation}
s_\varepsilon=\inf\{ t>t_\varepsilon,I_\varepsilon(t)\le I_*-2\delta\} \,.
\end{equation}
Lemma \ref{lemm:pers31} yields that these times are positive since $I_{*}<I_0$.\medskip\\

The proof of \eqref{eq:mino_I} in Theorem~\ref{theo:critere_asympt} i) is separated in different cases.
\paragraph{Case A:} if $s_\varepsilon\ge T$ for $\varepsilon\leq \varepsilon(T)$, with $\varepsilon(T)$ a small positive constant, then by definition, $\forall t\le T$, $\forall \varepsilon \leq \varepsilon(T)$, $I_\varepsilon(t)\ge I_*-2\delta$.
\paragraph{Case B:} we fix  $\frac{1}{2}<\beta<1$ and we consider two subcases. 
\subparagraph{Case B1:} if, up to a subsequence,  
\begin{equation}
\label{B1}
s_\varepsilon<T \text{ and }s_\varepsilon-t_\varepsilon\ge \varepsilon^\beta,
\end{equation}
then for all $t\in(0,s_\varepsilon)$, $I_\varepsilon(t)> I_*-2\delta$, and we deduce from Lemma \ref{lemme:ineq_dJeps} that
 $$\frac{\dx}{\dx t}\left(J_\varepsilon(t)\right)_-\le G_2-\frac{K_1^{-1}(I_*-2\delta)}{\varepsilon}\left(J_\varepsilon(t)\right)_-.$$
Let us denote $G=K_1^{-1}(I_*-2\delta) $ and remark that since $\beta<1$ and \eqref{ineq:Jeps_0}, we obtain from the Gronwall inequality that 
 \begin{equation}\label{eq:Jneg_petit}
 \left(J_\varepsilon(s_\varepsilon)\right)_-\le \left(J_\varepsilon(0)\right)_-e^{-Gs_\varepsilon/\varepsilon}+\varepsilon\frac{G_2}{G}\le H\varepsilon, \end{equation}
 where the last inequality holds for some $H>0$ and small enough $\varepsilon$.\\
 Now, let us show that $\Ieps$ is bounded by below on $(\seps,T)$.
For $t\in [s_\varepsilon,T]$, 
 $$\begin{aligned}
 I_\varepsilon(t)&=I_\varepsilon(s_\varepsilon)+\varepsilon\int_{s_\varepsilon}^t \int\Delta \psi(x)n_\varepsilon(x,u)\dx x \dx u +\int_{s_\varepsilon}^t J_\varepsilon(u)\dx u\,,\\
 &\ge I_\varepsilon(s_\varepsilon)-\rho_M||\Delta\psi||_\infty
 \varepsilon T-\int_{s_\varepsilon}^t \left(J_\varepsilon(u)\right)_- \dx u\,,\\
 &\ge \frac{I_*-2\delta}{2}-\int_{s_\varepsilon}^t \left(J_\varepsilon(u)\right)_- \dx u,
 \end{aligned}$$
 for $\varepsilon$ small enough, using \eqref{hyp:psi}.
 To conclude the proof it is sufficient to prove that 
 $$\int_{s_\varepsilon}^t \left(J_\varepsilon(u)\right)_-  \dx u< \frac{I_*-2\delta}{4}.$$
We proceed by contradiction and assume that up to a subsequence, there exists $T_\varepsilon<T$ such that 
  $$\int_{s_\varepsilon}^{T_\varepsilon} \left(J_\varepsilon(u)\right)_-  \dx u= \frac{I_*-2\delta}{4}.$$
Now using Lemma \ref{lemme:ineq_dJeps} again, $\forall t \in[s_\varepsilon,T_\varepsilon]$
 $$
 \begin{aligned}\frac{d}{dt}\left(J_\varepsilon(t)\right)_-
 &\le G_2-\frac{K_1^{-1}I_\varepsilon(t)}{\varepsilon}\left(J_\varepsilon(t)\right)_-\,,\\
  &\le G_2-\frac{K_1^{-1}\left[\frac{I_*-2\delta}{2}-\int_{s_\varepsilon}^t \left(J_\varepsilon(u)\right)_-du\right]}{\varepsilon}\left(J_\varepsilon(t)\right)_- \,,\\
  &\le G_2-\frac{K_1^{-1}(I_*-2\delta)}{4\varepsilon}\left(J_\varepsilon(t)\right)_- \,.
 \end{aligned}$$
As a consequence, $\forall s_{\varepsilon} \leq t \leq T_{\varepsilon}$, 
\begin{equation*}
(J_{\varepsilon}(t))_- \leq \left[ (J_{\varepsilon}(s_{\varepsilon}))_- - \frac{4 G_2  \varepsilon}{{K_1^{-1}(I_*-2\delta)}}\right] e^{- \frac{K_1^{-1}(I_*-2\delta) }{4  \varepsilon} (t-s_{\varepsilon})} + \frac{4 G_2  \varepsilon}{K_1^{-1}(I_*-2\delta)}.
\end{equation*}
Using \eqref{eq:Jneg_petit}, we deduce that there exists a positive constant $G_3$ such that $\forall t \in [s_{\varepsilon},T_{\varepsilon}]$, $0\leq (J_{\varepsilon}(t))_- \leq G_3\varepsilon$. 
As a consequence, we have that 
\begin{displaymath}
 0 \leq \int_{s_{\varepsilon}}^{T_{\varepsilon}} (J_{\varepsilon}(s))_- \dx s =  \frac{I_*-2\delta}{4}\leq G_3 \varepsilon T
\end{displaymath}
which leads to a contradiction as $\varepsilon\to0$. The result is then proved in this case.

\subparagraph{Case B2:} if, up to a subsequence,  
\begin{equation}
\label{B2}
s_\varepsilon<T\text{ and }s_\varepsilon-t_\varepsilon< \varepsilon^\beta\,,
\end{equation}
then we first prove that at some $\varepsilon-$dependent time, the resource consumption of individuals having a positive growth rate is uniformly bounded by below (see Lemma \ref{lemm:pers32}). Next, we prove that this is sufficient to conclude. Recalling the definition of $\Omega_{I}$ in \eqref{def:OmegaI}, we introduce a family of test functions $\varphi_{\varepsilon,I}$ such that for a given $C'>0$ and $ 2(1-\beta)<\alpha<1$,
$$
\left\{\begin{aligned} &\varphi_{\varepsilon,I}(x)=1\\
&\varphi_{\varepsilon,I}(x)=0\\
&\varphi_{\varepsilon,I}(x)\in(0,1)
\end{aligned}\right.\quad\quad 
\begin{aligned}&\text{for }x\in\Omega_{I+C'\varepsilon^{\alpha/2}}\\
&\text{for }x\in\Omega_{I}^c\\
&\text{for }x\in\Omega_{I}\setminus\Omega_{I+C'\varepsilon^{\alpha/2}}\,.\\
\end{aligned}$$
Moreover, we ask that 
$\varphi_{\varepsilon,I}\underset{\varepsilon\to0}{\longrightarrow}\mathds1_{\Omega_I}$ and that
\begin{equation}\label{eq:D2varphi}
||D^2 \varphi_{\varepsilon,I}||_{L^\infty(\Rd)} \leq \frac{C}{\varepsilon^\alpha},\end{equation}
for $C>0$. Such a sequence of functions exists since using the assumptions on $R$, for any $x\in\Omega_{I+C'\varepsilon^{\alpha/2}}$, then $d(x,\partial\Omega_I)\geq \tilde{C}\varepsilon^{\alpha/2}$ with $\tilde{C}>0$.\\
Finally, we define 
$$\Iepsun(t)=\intRd\psi(x)n_\varepsilon(t,x)\varphi_{\varepsilon,I_\varepsilon(t)}(x)\dx s.$$
We start by proving the following Lemma. 

\begin{lemme}\label{lemm:pers32}
Assume \eqref{hyp:psi}-\eqref{hyp:n0}, \eqref{eq:cond_pers} and \eqref{B2}, then there exists $\varepsilon_1(\delta)>0$ and $I_1>0$ such that for a sequence $(\tau_{\varepsilon})_{\varepsilon<\varepsilon_1(\delta)}$ with $\tau_{\varepsilon} \in (0,T]$, we have 
\begin{equation*}
\Iepsun(\tau_{\varepsilon}) \geq I_1\,,
\end{equation*}
and $\Ieps\geq I_1$ on $[0,\tau_{\varepsilon}]$.
\end{lemme}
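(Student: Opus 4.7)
The plan is to produce $\tau_\eps$ by tracking a static-cutoff version of the positive-growth-region mass,
\begin{equation*}
M_\eps(t) := \intRd \psi(x)\,\chi_\eps(x)\,\neps(t,x)\,\dx x,\qquad \chi_\eps := \varphi_{\eps,\widetilde I_*},
\end{equation*}
where $\widetilde I_* \in (0, I_* - 2\delta)$ is chosen small enough so that, combining the argument of Lemma \ref{lemm:pers31} with the weak convergence of $(\neps^0)_{\eps}$ to $n^0$, one has $M_\eps(0) \geq c_0 > 0$ uniformly for $\eps$ small enough. Indeed, a neighborhood of the point $x_0 \in \text{supp}\, n^0$ furnished by \eqref{eq:cond_pers} is contained in $\Omega_{\widetilde I_*}$ once $\widetilde I_*$ is small enough, and $\chi_\eps \to \mathds{1}_{\Omega_{\widetilde I_*}}$ allows to transfer the initial mass via weak convergence.

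Next, I would derive an evolution inequality for $M_\eps$. Using \eqref{eq:neps}, integrating the diffusion by parts against the bound \eqref{eq:D2varphi} and \eqref{estimate_rhoeps}, and observing that $R(\cdot,\widetilde I_*) \geq 0$ on $\text{supp}\,\chi_\eps$ combined with \eqref{hyp:dRI}, one obtains
\begin{equation*}
\frac{d M_\eps}{dt} \geq -\frac{K_1\,(\Ieps(t)-\widetilde I_*)_+}{\eps}\,M_\eps(t) - C\,\eps^{1-\alpha}.
\end{equation*}
Define then $\sigma_\eps := \inf\{t \geq 0 : \Ieps(t) \leq \widetilde I_*\}$ and $\tau_\eps := \max(\sigma_\eps, \eps^2)$. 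Since $\Ieps(t_\eps) = I_* - \delta < \widetilde I_*$, we have $\sigma_\eps \leq t_\eps < T$. Because $I \mapsto \varphi_{\eps,I}$ is pointwise non-increasing, one concludes at $\tau_\eps$ that $\varphi_{\eps,\Ieps(\tau_\eps)} \geq \chi_\eps$ (up to an $O(\eps)$ adjustment near the threshold, absorbed by taking $\widetilde I_*$ strictly smaller than the geometric value needed), hence $\Iepsun(\tau_\eps) \geq M_\eps(\tau_\eps)$. The lower bound $\Ieps \geq \widetilde I_* / 2 =: I_1$ on $[0, \tau_\eps]$ follows from the definition of $\sigma_\eps$ together with $|\Ieps'| = O(1/\eps)$ to control the $\eps^2$-overshoot.

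The main obstacle is the uniform lower bound $M_\eps(\tau_\eps) \geq c_0/2$. A direct Gronwall estimate is insufficient, since the integrated exponent $\frac{K_1}{\eps} \int_0^{\tau_\eps} (\Ieps - \widetilde I_*)_+ \, ds$ can blow up unless $\sigma_\eps$ is controlled. To close this, I would combine Lemma \ref{lemme:ineq_dJeps}---which forces $(\Jeps)_-$ to reach size $O(\eps)$ after a transient of length $O(\eps \log(1/\eps))$, provided $\Ieps$ remains uniformly positive---with a dichotomy on $\sigma_\eps$: either $\sigma_\eps = o(\eps)$, in which case the integrated exponent is controlled trivially; or $\sigma_\eps$ is larger, in which case the decay of $(\Jeps)_-$ implies $\Ieps' \geq -O(\eps)$ past the transient, forcing $\Ieps$ to spend most of the interval $[0,\sigma_\eps]$ within $O(\eps \log(1/\eps))$ of $\widetilde I_*$ and again making the integral small. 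The threshold $\widetilde I_*$ and the constant $I_1$ are selected at the end so as to absorb the resulting multiplicative constants.
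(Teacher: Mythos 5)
Your construction runs into two problems, one of sign and one of substance. The sign error: with $\widetilde I_* \in (0, I_*-2\delta)$ you have $I_*-\delta > I_*-2\delta > \widetilde I_*$, so $\Ieps(t_\eps) = I_*-\delta > \widetilde I_*$, not $<$. Consequently $\sigma_\eps > t_\eps$ (in fact $\sigma_\eps > s_\eps$, since $\Ieps(s_\eps)=I_*-2\delta>\widetilde I_*$), and you have no upper bound on $\sigma_\eps$ at all, so $\tau_\eps=\max(\sigma_\eps,\eps^2)$ need not lie in $(0,T]$. The substantive gap is that the fixed-cutoff Gronwall does not close. Your inequality gives $M_\eps(t) \ge M_\eps(0)\,\exp\bigl(-\tfrac{K_1}{\eps}\int_0^t(\Ieps(s)-\widetilde I_*)_+\,\dx s\bigr) - O(\eps^{1-\alpha})$, and on $[0,\sigma_\eps]$ the integrand $(\Ieps-\widetilde I_*)_+$ is bounded below by the positive constant $I_*-2\delta-\widetilde I_*$; since nothing forces $\sigma_\eps$ or $t_\eps$ to be $O(\eps)$ under \eqref{B2} (indeed $t_\eps$ is typically macroscopic when the initial favorable mass is large), the exponent can be of size $1/\eps$, making $M_\eps(\tau_\eps)$ exponentially small rather than uniformly bounded below. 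Your proposed repair via Lemma \ref{lemme:ineq_dJeps} does not help: that lemma controls how fast $\Ieps$ can \emph{decrease} after a transient (bounding $(\Jeps)_-$, hence $\Ieps'$ from below), but places no obstruction to $\Ieps$ hovering at a level strictly above $\widetilde I_*$ for a time of order one, which is all that is needed for the Gronwall exponent to blow up.

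The paper circumvents this by never running a Gronwall with an $O(1/\eps)$ rate over a macroscopic interval. It tracks the complementary mass $\Iepsdeux(t)=\intRd\psi\,\neps\,(1-\varphi_{\eps,I_*-2\delta})\,\dx x$ and dichotomizes on its size at $t_\eps$. If $\Iepsdeux(t_\eps)<I_*/8$, then on the short interval $[t_\eps,s_\eps]$ (length $<\eps^\beta$ by \eqref{B2}) the growth rate on $\text{supp}(1-\varphi_{\eps,I_*-2\delta})$ is only $O(\eps^{\alpha/2})$, so the Gronwall exponent $C_2\eps^{\alpha/2-1}(s_\eps-t_\eps)<C_2\eps^{\alpha/2+\beta-1}\to0$ and $\Iepsdeux(s_\eps)$ stays small; one then reads off $\Iepsun(s_\eps)=\Ieps(s_\eps)-\Iepsdeux(s_\eps)\ge (I_*-2\delta)-I_*/4>0$ and takes $\tau_\eps=s_\eps$. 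If instead $\Iepsdeux(t_\eps)\ge I_*/8$, then on $[0,t_\eps]$ (where $\Ieps>I_*-\delta$) the growth rate outside $\Omega_{I_*-\delta}$ is $\le -C_2<0$, forcing exponential decay of $\Iepsdeux$ at rate $C_2/\eps$, which is incompatible with $\Iepsdeux(t_\eps)\ge I_*/8$ unless $t_\eps\le A\eps$; over a time $O(\eps)$ the favorable mass $\intRd\psi\,\neps\,\varphi_{\eps,I_*-\delta}\,\dx x$ decays by at most a bounded factor, and one takes $\tau_\eps=t_\eps$. The ``time of order $\eps$'' in this second case is exactly the ingredient your approach lacks for the Gronwall to close uniformly.
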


\begin{proof}
Let us introduce for $t\in(0,T)$,
\begin{equation*}
\Iepsdeux (t) := \intRd \psi(x) \neps(t,x) (1- \varphi_{\varepsilon,I_*-2\delta} )  \dx x\,,
\end{equation*}
an approximation of the consumption rate at time $t$ of individuals that would have a negative growth rate for a competition level of $I_*-2\delta$. We obtain the result depending on the situation at time $t_{\varepsilon}$. 
\begin{enumerate}
\item[a)] First, assume that 
\end{enumerate}
\begin{equation*}
\Iepsdeux (t_{\varepsilon})  < \frac{I_*}{8}\,. 
\end{equation*}
In the following we prove that $\Iepsdeux (s_{\varepsilon})$ is still small and hence $\Iepsun (s_{\varepsilon})=\Ieps(s_\varepsilon)-\Iepsdeux (s_{\varepsilon})$ is bounded  by below by a positive constant.
To prove that, we derive an estimate on $\Iepsdeux $ for $t \in (t_{\varepsilon},s_{\varepsilon})$:
\begin{equation*}
\begin{aligned}
\frac{\dx}{\dx t } \Iepsdeux(t) &= \varepsilon \intRd \Delta(\psi (1- \varphi_{\varepsilon,I_*-2\delta})) \neps(t,x) \dx x + \frac{1}{\varepsilon} \intRd \psi \neps (1- \varphi_{\varepsilon,I_*-2\delta}) R(x,\Ieps(t)) \dx x \\
& < C_1 \varepsilon^{1-\alpha}+ \frac{1}{\varepsilon} \intRd \psi \neps (1- \varphi_{\varepsilon,I_*-2\delta}) R(x,\Ieps(t)) \dx x \,,
\end{aligned}
\end{equation*}
where we used \eqref{hyp:psi}, \eqref{estimate_Ieps} and \eqref{eq:D2varphi}. Now, $1- \varphi_{\varepsilon,I_*-2\delta}$ has its support in $\Omega_{I_*-2\delta + C'\varepsilon^{\alpha/2}}^c$. 
Moreover, in $[\teps,\seps]$, we have that $\Ieps(t) \geq I_*-2\delta$, so that \eqref{hyp:dRI}  yields
\begin{equation*}
\begin{aligned}
R(x,\Ieps(t))\leq R(x,I_*-2\delta)&< R(x,I_*-2\delta+C'\varepsilon^{\alpha/2})+ C'\varepsilon^{\alpha/2} K_1\\
&<C'K_1\varepsilon^{\alpha/2}, \quad \quad \quad \forall x\in\Omega_{I_*-2\delta + C'\varepsilon^{\alpha/2}}^c.
\end{aligned}
\end{equation*}
 As a consequence, we obtain that 
\begin{equation*}
\begin{aligned}
\frac{\dx}{\dx t } \Iepsdeux(t) 
& < C_1\varepsilon^{1-\alpha}+ C_2\varepsilon^{\alpha/2-1} \Iepsdeux(t)  \,.
\end{aligned}
\end{equation*}
The Gronwall Lemma yields that 
\begin{equation*}
\begin{aligned}
\Iepsdeux(\seps)& \leq \Iepsdeux(\teps) e^{C_2\varepsilon^{\alpha/2-1}(\seps-\teps)} + \frac{C_1\varepsilon^{1-\alpha}}{C_2\varepsilon^{\alpha/2-1}} \left(e^{C_2\varepsilon^{\alpha/2-1}(\seps-\teps)} -1 \right) \\
&\leq \frac{I_*}{8} e^{C_2 \varepsilon^{\beta+\alpha/2-1}} + \frac{C_1\varepsilon^{2-3\alpha/2}}{C_2} \left(e^{C_2 \varepsilon^{\beta+\alpha/2-1}} -1 \right)\,,
\end{aligned}
\end{equation*}
using the assumption on $\Iepsdeux(\teps)$ and \eqref{B2}. It follows that since $\beta+\alpha/2>1$ by assumption on $\alpha$, for $\varepsilon$ small enough $\Iepsdeux(s_\varepsilon)\le I_*/4$. Then,
\begin{equation*}
\begin{aligned}
\Iepsun(\seps)& =\Ieps(\seps) -  \Iepsdeux(\seps)\,,\\
&\geq I_*-2\delta - \frac{I_*}{4}>0\,,
\end{aligned}
\end{equation*}
for $\delta$ small enough, leading to the result for $\tau_{\varepsilon}=\seps$.

\begin{enumerate}
\item[b)] In the other case, assume that 
\end{enumerate}
\begin{equation*}
\Iepsdeux (t_{\varepsilon})  \geq \frac{I_*}{8}\,.
\end{equation*}
We first evaluate an approximation of $\Iepsdeux$ on $(0,t_{\varepsilon})$, that enables us to deduce that $\teps$ is small. From this we will obtain that  $\Iepsun(\teps)$ is bounded by below by a positive constant. For $t \in (0,t_{\varepsilon})$,
\begin{equation*}
\begin{aligned}
\frac{\dx}{\dx t } \Iepsdeux(t) &= \varepsilon \intRd \Delta(\psi (1- \varphi_{\varepsilon,I_*-2\delta})) \neps(t,x) \dx x + \frac{1}{\varepsilon} \intRd \psi \neps (1- \varphi_{\varepsilon,I_*-2\delta}) R(x,\Ieps(t)) \dx x\,.
\end{aligned}
\end{equation*}
Using \eqref{hyp:psi}, \eqref{estimate_Ieps} and \eqref{eq:D2varphi}, we have that the first term in the right-hand-side is smaller than $C_1\varepsilon^{1-\alpha}$ for some $C_1>0$. Moreover, note that $1- \varphi_{\varepsilon,I_*-2\delta}$ has its support in 
$$\Omega_{I_* - 2\delta +C'\varepsilon^{\alpha/2}}^c \supset \Omega_{I_* - \delta }^c $$
since  $  I_*-\delta >I_*-2\delta +C'\varepsilon^{\alpha/2}$ for $\varepsilon$ small enough. It follows that, since for all $t\in(0,\teps)$, $\Ieps(t) > I_*-\delta$, there exists $C_2>0$ such that $R(x,\Ieps(t)) < -C_2$ on $\Omega_{I_* - 2\delta +C'\varepsilon^{\alpha/2}}^c$. Note that the constant $C_2$ depends on $\delta$. Using this information we obtain that 
\begin{equation*}
\begin{aligned}
\frac{\dx}{\dx t } \Iepsdeux(t) 
& < C_1\varepsilon^{1-\alpha} - \frac{C_2}{\varepsilon} \Iepsdeux(t)  \,,
\end{aligned}
\end{equation*}
and the Gronwall Lemma combined with the estimate on $\Iepsdeux (t_{\varepsilon}) $ yields 
\begin{align*}
\frac{I_*}{8} \leq \Iepsdeux(\teps) &\leq \Iepsdeux(0) e^{-\frac{C_2}{\varepsilon} \teps} + \frac{C_1}{C_2} \varepsilon^{2-\alpha}\left( 1-e^{-\frac{C_2}{\varepsilon}t_\varepsilon}\right)\\
&\leq I_M e^{-\frac{C_2}{\varepsilon} \teps} + C_1' \varepsilon^{2-\alpha}\,,
\end{align*}
where we have used \eqref{hyp:I0}. It follows that necessarily, $\teps \leq A\varepsilon$ for some $A>0$.  \\
Let us now deduce a lower bound for $\Iepsun(\teps)$. We compute
\begin{equation*}
\begin{aligned}
\frac{\dx}{\dx t} \intRd \psi(x) \neps(t,x) \varphi_{\varepsilon,I_*-\delta}(x) \dx x &= \varepsilon \intRd \neps \Delta (\psi \varphi_{\varepsilon,I_*-\delta}) \dx x + \frac{1}{\varepsilon} \intRd \psi \neps R \varphi_{\varepsilon, I_*-\delta } \dx x \,.
\end{aligned}
\end{equation*}
Using \eqref{estimate_rhoeps} and \eqref{eq:D2varphi}, the first term on the right hand side is bounded by below by $-C_3\varepsilon^{1-\alpha}$ for some $C_3>0$. Moreover, $\varphi_{\varepsilon,I_*-\delta}$ has its support in $\Omega_{I_*-\delta}$ that is included in a compact from \eqref{hyp:R+compact}, so that $|R| < C_4$ for some $C_4 >0$. It follows that 
\begin{equation*}
\begin{aligned}
\frac{\dx}{\dx t} \intRd \psi \neps \varphi_{\varepsilon,I_*-\delta} \dx x &\geq  -C_3\varepsilon^{1-\alpha} - \frac{C_4}{\varepsilon} \intRd \psi \neps  \varphi_{\varepsilon, I_*-\delta } \dx x \,,
\end{aligned}
\end{equation*}
and the Gronwall Lemma yields that 
\begin{equation*}
\begin{aligned}
 \intRd \psi(x) \neps(\teps,x) \varphi_{\varepsilon,I_*-\delta}(x) \dx x &\geq  \intRd \psi(x) \neps^0(x) \varphi_{\varepsilon,I_*-\delta}(x) \dx x e^{-\frac{C_4}{\varepsilon} \teps} - \frac{C_3}{C_4} \varepsilon^{2-\alpha} \left(1 - e^{-\frac{C_4}{\varepsilon}\teps} \right)\,.
\end{aligned}
\end{equation*}
Now, we have that for $\varepsilon$ small, $I_*-\delta + C'\varepsilon < I_*$ and using the definition of $\varphi_{\varepsilon,I_*-\delta}$ and Lemma \ref{lemm:pers31}:
\begin{equation*}
\begin{aligned}
 \intRd \psi(x) \neps^0(x) \varphi_{\varepsilon,I_*-\delta}(x) \dx x & \geq  \int_{\Omega_{I_*-\delta + C'\varepsilon^{\alpha/2}}} \psi(x) \neps^0(x)  \dx x \,,\\
 &\geq \int_{\Omega_{I_*}} \psi(x) \neps^0(x)  \dx x \geq I_0.
\end{aligned}
\end{equation*}
Finally, from $\teps \leq A \varepsilon$, we obtain that 
\begin{equation*}
\begin{aligned}
 \intRd \psi(x) \neps(\teps,x) \varphi_{\varepsilon,I_*-\delta}(x) \dx x &\geq I_0 e^{-C_4 A } - \frac{C_3}{C_4} \varepsilon>0
\end{aligned}
\end{equation*}
for $\varepsilon$ small enough, leading to $\Iepsun(\teps)>\underline{I}>0$ for some $\underline{I}$, and the result is proved for $\tau_{\varepsilon} = \teps$. 
\end{proof}

We have derived a positive uniform lower bound for $\Ieps$ at some $\varepsilon$-dependent time interval. It remains to extend this result to obtain a uniform lower bound on the interval $[0,T]$. 
%
%
%
%
Write $E := \parallel \Delta \psi \parallel_{\infty} \rho_M$ and define 
\begin{displaymath}
\nu_{\varepsilon}=\inf \{t \geq \tau_{\varepsilon},\, \Jeps(t) \geq -(E+1) \varepsilon \}\,.
\end{displaymath}
Then, either $\nu_{\varepsilon} >T$ or $\nu_{\varepsilon} \leq T$, and we prove now the result in each situation. 

\begin{enumerate}
\item[i)] If $\nu_{\varepsilon} > T$, then for all $t \in [\tau_{\varepsilon}, T]$, $\Jeps(t) < - \left( E +1 \right) \varepsilon$, so that 
\end{enumerate}
\begin{equation*}
\frac{\dx \Ieps(t)}{\dx t} = \varepsilon \intRd \Delta\psi(x) \neps(t,x) \dx x + \Jeps(t) < -\varepsilon <0\,,
\end{equation*}
so that $\Ieps$ is strictly decreasing. From \eqref{hyp:dRI}, we deduce that for any $x \in \Rd$, 
\begin{displaymath}
\frac{\partial}{\partial t} R(x,\Ieps(t)) = \frac{\partial}{\partial I} R(x,\Ieps(t)) \times \frac{\dx \Ieps(t)}{\dx t} > K_1^{-1} \varepsilon >0\,,
\end{displaymath}
so that for all $t\in[ \tau_{\varepsilon},T]$,
\begin{displaymath}
\begin{aligned}
R(x,\Ieps(t)) &> R(x,\Ieps(\tau_{\varepsilon})) + (t-\tau_{\varepsilon}) K_1^{-1} \varepsilon \\
& \geq (t-\tau_{\varepsilon}) K_1^{-1}\varepsilon \quad \text{ on } \Omega_{\Ieps(\tau_{\varepsilon}) }\,.
\end{aligned}
\end{displaymath}
Now, for $t\in [ \tau_{\varepsilon},T]$, let us introduce
\begin{equation*}
\Iepstrois(t) := \intRd \psi \neps(t,x) \varphi_{\varepsilon,\Ieps(\tau_{\varepsilon})} \dx x \,.
\end{equation*}
In particular, note that $\Iepstrois(\tau_{\varepsilon}) = \Iepsun(\tau_{\varepsilon})$. We compute
\begin{equation*}
\begin{aligned}
\frac{\dx}{\dx t} \Iepstrois(t) &= \varepsilon \intRd \Delta (\psi\varphi_{\varepsilon,\Ieps(\tau_{\varepsilon})}) \neps(t,x) \dx x + \frac{1}{\varepsilon} \intRd \psi \varphi_{\varepsilon,\Ieps(\tau_{\varepsilon})} \neps(t,x) R(x,\Ieps(t))\dx x \\
& > - C \varepsilon^{1-\alpha} + (t-\tau_{\varepsilon})K_1^{-1}\Iepstrois(t) 
\end{aligned}
\end{equation*}
for some $C>0$, using \eqref{hyp:psi}, \eqref{estimate_Ieps} and \eqref{eq:D2varphi}. We deduce that for $t\in [ \tau_{\varepsilon},T]$, 
\begin{equation*}
\begin{aligned}
\Iepstrois(t) & \geq  \Iepstrois(\tau_{\varepsilon})  e^{\frac{K_1}{2}(t-\tau_{\varepsilon})^2 } 
 - C e^{\frac{K_1 (t-\tau_{\varepsilon})^2}{2}}  \varepsilon^{1-\alpha} \int_{\tau_{\varepsilon}}^t  e^{-\frac{K_1  (s-\tau_{\varepsilon})^2 }{2}} \dx s   \\
& \geq \Iepsun(\tau_{\varepsilon}) -CT  e^{\frac{K_1 T^2}{2}}  \varepsilon^{1-\alpha} 
 \\
&  \geq I_1   -C  T e^{K_1T^2}  \varepsilon^{1-\alpha}>0 
 \end{aligned}
\end{equation*}
for $\varepsilon < \varepsilon_0(T)$ small enough. It follows that there exists $\underline{I}(T)>0$ such that $\forall t\in (0,T)$, $\Ieps(t)\geq \Iepstrois(t) \geq \underline{I}(T)$.

\begin{enumerate}
\item[ii)] If $\nu_{\varepsilon}< T$, we can use the previous argument to show that
\end{enumerate}
\begin{equation*}
\exists \varepsilon_0(T)>0,\, {\underline{I}(T)}>0 \text{ such that } \forall \varepsilon <\varepsilon_0(T),\, \forall t \in [\tau_{\varepsilon},\nu_{\varepsilon}),\,\Ieps(t) \geq {\underline{I}(T)}\,.
\end{equation*}
Therefore, $\Ieps(\nu_{\varepsilon})  \geq {\underline{I}(T)}$, and $\Jeps(\nu_{\varepsilon}) \geq -(E+1)\varepsilon$. 
For $\varepsilon < \varepsilon_1(T)$ small enough and $t \in (\nu_{\varepsilon},T)$, we have that
\begin{equation}\label{estimate:Ineq_Ieps}
\Ieps(t) = \Ieps(\nu_{\varepsilon}) + \int_{\nu_{\varepsilon}}^t \Ieps'(s) \dx s = \Ieps(\nu_{\varepsilon}) + \int_{\nu_{\varepsilon}}^t \Jeps(s) \dx s + \mathcal{O}(\varepsilon )(t-\nu_{\varepsilon}) \geq \frac{{\underline{I}(T)}}{2} - \int_{\nu_{\varepsilon}}^t (\Jeps(s))_- \dx s,
\end{equation}
and we obtain from \eqref{ineq:dJeps_neg} that
\begin{equation*}
\begin{aligned}
\frac{\dx}{\dx t } (\Jeps(t))_- &\leq  G - \frac{K_1^{-1}}{\varepsilon} \left( \frac{{\underline{I}(T)}}{2} - \int_{\nu_{\varepsilon}}^t (\Jeps(s))_- \dx s\right) (\Jeps(t))_- .
\end{aligned}
\end{equation*}
Now, we want to show that under some conditions, $\forall  \varepsilon  < \varepsilon_0(T)$, $\int_{\nu_{\varepsilon}}^{T} (J_{\varepsilon}(s))_- \dx s \leq \frac{{\underline{I}(T)}}{4}$. Let us proceed by contradiction. Assume that this is not the case: there exists a sequence $(\varepsilon_k)_k$ in $(0,\varepsilon_1(T))$ with $\underset{k\rightarrow + \infty}{\lim} \varepsilon_k = 0$, and $ \forall k,\,\exists T_{\varepsilon_k}' < T$ such that $\int_{\nu_{\varepsilon_k}}^{T_{\varepsilon_k}'} (J_{\varepsilon_k}(s))_- \dx s = \frac{{\underline{I}(T)}}{4}$.
Then we have $\forall k\geq 0$,
\begin{equation*}
\frac{\dx}{\dx t } (J_{\varepsilon_k}(t))_- \leq  G - \frac{K_1^{-1}}{\varepsilon_k} \frac{{\underline{I}(T)}}{4}  (J_{\varepsilon_k}(t))_- \quad \forall \nu_{\varepsilon_k}\leq t \leq T'_{\varepsilon_k}.
\end{equation*}
As a consequence, $\forall k\geq 0$, $\forall \nu_{\varepsilon_k} \leq t \leq T'_{\varepsilon_k}$, 
\begin{equation*}
(J_{\varepsilon_k}(t))_- \leq (J_{\varepsilon_k}(\nu_{\varepsilon_k}))_- e^{- \frac{{\underline{I}(T)}}{4 K_1 \varepsilon_k} (t-\nu_{\varepsilon_k})} + \frac{4 G K_1 \varepsilon_k}{{\underline{I}(T)}} \left(1- e^{- \frac{{\underline{I}(T)}}{4 K_1 \varepsilon_k} (t-\nu_{\varepsilon_k})} \right),
\end{equation*}
or equivalently
\begin{equation*}
(J_{\varepsilon_k}(t))_- \leq \left[ (J_{\varepsilon_k}(\nu_{\varepsilon_k}))_- - \frac{4 G K_1 \varepsilon_k}{{\underline{I}(T)}}\right] e^{- \frac{{\underline{I}(T)} }{4 K_1 \varepsilon_k} (t-\nu_{\varepsilon_k})} + \frac{4 G K_1 \varepsilon_k}{{\underline{I}(T)}}.
\end{equation*}
Since $(J_{\varepsilon_k}(\nu_{\varepsilon_k}))_- \leq (E+1)\varepsilon_k$, we deduce that there exists a positive constant $G_1$ such that $\forall t \in [\nu_{\varepsilon_k},T_{\varepsilon_k}']$, $0\leq (J_{\varepsilon_k}(t))_- \leq G_1\varepsilon_k$.
As a consequence, we have that $$ 0 \leq \int_{\nu_{\varepsilon_k}}^{T_{\varepsilon_k}'} (J_{\varepsilon_k}(s))_- \dx s =  \frac{\underline{I}(T)}{4} \leq G_1 \varepsilon_k T\,,$$ which leads to a contradiction for $k$ large enough. Therefore, for all $\varepsilon < \varepsilon_1(T)$, we have $\int_{\nu_{\varepsilon}}^{T} (\Jeps(s))_- \dx s \leq \frac{{\underline{I}(T)}}{4}$, and from the estimate \eqref{estimate:Ineq_Ieps}, we have that $\forall \varepsilon < \varepsilon_1(T)$, for $t \in (\nu_{\varepsilon},T)$,
\begin{equation*}
\Ieps(t) \geq \frac{{\underline{I}(T)}}{4}\,,
\end{equation*}
and the result is proved. 


\subsubsection*{BV bound}
We derive now a sub-Lipschitz bound as well as a BV bound on $\Ieps$ that allow to pass to the limit after extraction of a subsequence, ending the proof of the first point of Theorem \ref{theo:critere_asympt}.

\begin{prop}\label{prop:cv_I_survie}
With the assumptions \eqref{hyp:psi}-\eqref{hyp:n0} and assuming that $\exists \underline{I}(T)>0$, $\varepsilon_0(T)>0$ such that $\forall \varepsilon<\varepsilon_0$, $\forall t\in [0,T]$, $\Ieps(t)>\underline{I}(T)$, we obtain the following locally uniform BV bound on $[0,T]$.
For $\varepsilon < \varepsilon_0 \leq 1$, and $C_1$, $G$ some positive constants, we have the sub-Lipschitz bound

\begin{equation}\label{eq:sub-lipschitz}
\frac{\dx \Ieps}{\dx t}(t) \geq - \varepsilon \left[C_1\rho_M+ \frac{G K_1}{\underline{I}(T)} \right] - (\Jeps(0))_- e^{- \frac{\underline{I}(T)}{K_1 \varepsilon} t}\,,
\end{equation}
so that we obtain the BV bound
\begin{equation}\label{eq:bv}
\begin{aligned}
\int_{0}^{T} \left| \frac{\dx \Ieps}{\dx t}(t) \right| \dx t  
 & \leq A(T) +
 \frac{K_1}{\underline{I}(T)}\left(\intRd \psi(x) \neps(0,x) R(x,\Ieps(0)) \dx x \right)_-\,,
\end{aligned}
\end{equation}
with $A(T)=2 I_M + T\left(2 C_1  \rho_M + \frac{G K_1}{\underline{I}(T)} \right)$.\newline
Consequently, after extraction of a subsequence, $(\Ieps(t))_{\varepsilon}$ converges a.e in $\R_+$ when $\varepsilon$ goes to zero to a function $I$ such that $\forall T>0$, there exists $\underline{I}(T)>0$ such that $I(t)\geq \underline{I}$ on $[0,T]$.

\end{prop}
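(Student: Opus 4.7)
The plan is to combine the pointwise identity
\[
\frac{d}{dt}I_\varepsilon(t) = \varepsilon\int \Delta\psi(x)\,n_\varepsilon(t,x)\,dx + J_\varepsilon(t)
\]
with the differential inequality on $(J_\varepsilon)_-$ from Lemma \ref{lemme:ineq_dJeps} and the newly available strictly positive lower bound $I_\varepsilon(t)\ge\underline{I}(T)$ on $[0,T]$. The first term on the right-hand side is $O(\varepsilon)$ thanks to \eqref{hyp:psi} and the uniform bound $\rho_\varepsilon\le \rho_M$ from Proposition \ref{prop:estimates_ieps}; only the negative part of $J_\varepsilon$ can produce a large downward drift, so the whole game reduces to controlling $(J_\varepsilon)_-$.

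Plugging $I_\varepsilon\ge\underline{I}(T)$ into \eqref{ineq:dJeps_neg} yields
\[
\frac{d}{dt}(J_\varepsilon(t))_- \le G_2 - \frac{K_1^{-1}\underline{I}(T)}{\varepsilon}\,(J_\varepsilon(t))_-,
\]
and Gronwall's lemma gives
\[
(J_\varepsilon(t))_- \le (J_\varepsilon(0))_- \,e^{-\frac{\underline{I}(T)}{K_1\varepsilon}t} + \frac{G_2 K_1\varepsilon}{\underline{I}(T)}.
\]
Combining this with the trivial bound $|\varepsilon\int \Delta\psi\,n_\varepsilon\,dx|\le \varepsilon C_1\rho_M$ and the inequality $\tfrac{d I_\varepsilon}{dt}\ge -\varepsilon C_1\rho_M - (J_\varepsilon(t))_-$ yields the sub-Lipschitz bound \eqref{eq:sub-lipschitz} after identifying $G$ with $G_2$.

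For the BV bound I would use the decomposition $|a|=a+2a_-$, giving
\[
\int_0^T \left|\tfrac{dI_\varepsilon}{dt}\right|dt = I_\varepsilon(T)-I_\varepsilon(0) + 2\int_0^T \left(\tfrac{dI_\varepsilon}{dt}\right)_- dt.
\]
The first piece is bounded by $2I_M$ thanks to Proposition \ref{prop:estimates_ieps}, and the second piece is controlled by $\varepsilon C_1\rho_M T + \int_0^T (J_\varepsilon(t))_- dt$. Integrating the Gronwall estimate above in time and using that $\int_0^T e^{-\underline{I}(T)t/(K_1\varepsilon)}dt \le K_1\varepsilon/\underline{I}(T)$, the term involving $(J_\varepsilon(0))_-$ contributes
\[
\frac{K_1}{\underline{I}(T)}\,\varepsilon (J_\varepsilon(0))_- = \frac{K_1}{\underline{I}(T)}\left(\int\psi(x)n_\varepsilon^0(x)R(x,I_\varepsilon(0))\,dx\right)_-,
\]
using the definition \eqref{def:Jeps} of $J_\varepsilon$. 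The remaining source term contributes an $O(\varepsilon T)$, absorbed into $A(T)$ (for $\varepsilon\le 1$), which gives \eqref{eq:bv}.

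Once the BV bound is established, the sequence $(I_\varepsilon)_\varepsilon$ is uniformly of bounded total variation on every $[0,T]$ and uniformly bounded in $L^\infty$ by Proposition \ref{prop:estimates_ieps}. Helly's selection theorem then provides a subsequence converging a.e.\ on $\mathbb{R}_+$ (using a diagonal extraction over $T=1,2,\dots$) to a BV limit $I$. The lower bound $I(t)\ge\underline{I}(T)$ on $[0,T]$ passes to the limit at every continuity point of $I$ from the pointwise inequality $I_\varepsilon\ge\underline{I}(T)$. The only place any care is needed is the manipulation of the Gronwall inequality to extract exactly the initial-data term appearing in \eqref{eq:bv}; the remaining steps are essentially routine.
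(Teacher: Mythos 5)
Your proof follows the same path as the paper: decompose $|\,\dot I_\eps\,| = \dot I_\eps + 2(\dot I_\eps)_-$, bound the negative part via the Gronwall estimate for $(J_\eps)_-$ from Lemma~\ref{lemme:ineq_dJeps} using the lower bound $I_\eps\ge\underline{I}(T)$, recover the initial-data term from $\eps (J_\eps(0))_-=\bigl(\int\psi\,n_\eps^0\,R(\cdot,I_\eps(0))\,dx\bigr)_-$, and conclude by Helly's selection theorem with a diagonal extraction. The constants agree up to a harmless factor of $2$ in front of the $(J_\eps)_-$ contribution (which the paper itself drops), so the argument matches the paper's reasoning.
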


\begin{proof}
We adapt the proof of \cite{Perthame2008}. We want to show that 

\begin{equation*}
\int_{0}^{T} \left| \frac{\dx \Ieps}{\dx t} \right|(t) \dx t < \overline{C}
\end{equation*}
for some positive constant $\overline{C}$. Writing

\begin{displaymath}
\left| \frac{\dx \Ieps}{\dx t} \right|(t)  = \left( \frac{\dx \Ieps}{\dx t}(t) \right)_+ +  \left( \frac{\dx \Ieps}{\dx t}(t) \right)_- = \frac{\dx \Ieps}{\dx t}(t) + 2 \left( \frac{\dx \Ieps}{\dx t}(t) \right)_-\,,
\end{displaymath}
we obtain that 

\begin{equation*}
\begin{aligned}
\int_{0}^{T} \left| \frac{\dx \Ieps}{\dx t}(t) \right| \dx t  
&= \Ieps(T) - \Ieps(0) + 2 \int_{0}^{T}  \left( \frac{\dx \Ieps}{\dx t}(t) \right)_- \dx t\,.
\end{aligned}
\end{equation*}
Now, on $[0,T]$,

\begin{equation}\label{eq:dIeps_BV}
\frac{\dx \Ieps}{\dx t}(t) = \varepsilon \intRd \Delta \psi(x) \neps(t,x) \dx x + \Jeps(t) \geq - C_1 \varepsilon \rhoeps(t) - (\Jeps(t))_-\,,
\end{equation} 
so that 

\begin{equation*}
\left( \frac{\dx \Ieps}{\dx t}(t) \right)_- \leq C_1 \varepsilon \rho_M + (\Jeps(t))_-\,.
\end{equation*}
Therefore, we have that

\begin{equation*}
\begin{aligned}
\int_{0}^{T} \left| \frac{\dx \Ieps}{\dx t}(t) \right| \dx t  
&\leq \Ieps(T) - \Ieps(0) + 2 T C_1 \varepsilon \rho_M+ \int_{0}^{T}  (\Jeps(t))_- \dx t\,,
\end{aligned}
\end{equation*}
and the uniform BV bound on $\Ieps$ relies on a uniform bound for $\int_{0}^{T} (\Jeps(t))_-  \dx t$. We use Lemma \ref{lemme:ineq_dJeps} together with the lower bound on $\Ieps$ to get

\begin{equation*}
\frac{\dx}{\dx t } (\Jeps(t))_- \leq G - \frac{\underline{I}(T)}{\varepsilon K_1}  (\Jeps(t))_-\,,
\end{equation*}
leading to 
\begin{equation*}
(\Jeps(t))_- \leq \varepsilon \frac{G K_1}{\underline{I}(T)} + e^{- \frac{\underline{I}(T)}{K_1 \varepsilon}t}(\Jeps(0))_- \,.
\end{equation*}
This inequality combined with \eqref{eq:dIeps_BV} give the sub-Lipschitz bound \eqref{eq:sub-lipschitz}. Finally, we also obtain that 

\begin{equation*}
\begin{aligned}
&
\int_{0}^{T} \left| \frac{\dx \Ieps}{\dx t}(t) \right| \dx t  
\leq \Ieps(T) - \Ieps(0) + \varepsilon T \left(2 C_1 \rho_M + \frac{G K_1}{\underline{I}(T)} \right) \\
& \hspace{5cm} + \frac{K_1 \varepsilon}{\underline{I}(T)} \left(1-e^{- \frac{\underline{I}(T)}{K_1 \varepsilon}T}\right)(\Jeps(0))_-\,,
\\
 & \leq 2I_M + C \varepsilon^2 + \varepsilon T \left(2 C_1  \rho_M + \frac{G K_1}{\underline{I}(T)} \right) +
 \frac{K_1 \varepsilon}{\underline{I}(T)} \left(1-e^{- \frac{\underline{I}(T)}{K_1 \varepsilon} T}\right)(\Jeps(0))_-\,.
\end{aligned}
\end{equation*}
Finally, the convergence in $\R_+$ of $(\Ieps(t))_{\varepsilon}$ up to a subsequence follows, ending the proof of Proposition \ref{prop:cv_I_survie}, and of the first assertion of Theorem \ref{theo:critere_asympt}. 
\end{proof}

\subsection{Asymptotic extinction on a time interval}
We show now the second statement of Theorem \ref{theo:critere_asympt}. We recall the assumption \eqref{eq:cond_ext} namely that $\exists C>0$ such that
\begin{equation*}
\Gamma_0= \{x\in\Rd,\, u(0,x)=0\} \subseteq \{ x\in \Rd,\, R(x,0) \leq-C\}\,,
\end{equation*}
Let us define for $\delta>0$, 
$$\mathcal{O}_{\delta}:=\{x \in \Rd,\, u(0,x) \geq -\delta \}$$ and recall that $\mathcal{O}_0=\Gamma_0$ since $u\le0$ from Theorem \ref{theo:CV_ueps} i).
Using \eqref{eq:bounds_ueps}, $\mathcal{O}_\delta$ is bounded and from the local uniform continuity of $u$, for $\delta$ and $a$ small enough, we have $\mathcal{O}_{\delta}\subset A_{C/2}:=\{ x\in \Rd,\, R(x,0) \leq -C/2\}$ with $\mathrm{dist}(\mathcal{O}_{\delta},\partial A_{C/2})\geq a>0$. \\
Moreover, by the uniform continuity in time of $u$, there exists $T_0>0$ so that 
\begin{displaymath}
\forall t\in [0,T_0),\, \left\{x \in \Rd,\, u(t,x)>-\frac{\delta}{2} \right\} \subseteq \mathcal{O}_{\delta}\,.
\end{displaymath}

\noindent Now, since $\psi$ is positive and bounded from \eqref{hyp:psi}, we write on $[0,T_0)$,
\begin{equation*}
\Ieps(t) \leq \psi_M \int_{\mathcal{O}_{\delta}} \neps(t,x)\dx x + \psi_M \int_{(\mathcal{O}_{\delta})^c} \neps(t,x)\dx x\,,
\end{equation*}
and we prove that each term separately goes to $0$, starting with the second term.
\medskip

From Proposition \ref{prop:regularizing_effect} (i), there exist positive constants $F_i$, $i\in \{ 1,...,4 \}$, such that for $(t,x) \in [0,T] \times \Rd$, $\forall \varepsilon< \varepsilon_0$,
\begin{displaymath}
-F_1 T - F_2 |x|^2 \leq \ueps(t,x) \leq F_3 T -F_4 |x|\,.
\end{displaymath}
Therefore there exist $r_0>0$ such that for all $t\in [0,T_0]$ and $|x| \geq r_0$, $\ueps(t,x)\leq -\frac{F_4}{2}|x| $.
Now,
\begin{equation}
\label{eq:1}\begin{aligned}
 \int_{\left( \mathcal{O}_{\delta}\right)^c}\neps(t,x) \dx x  
 &=  \int_{\left(\mathcal{O}_{\delta}\right)^c \cap B(0,r_0)}\neps(t,x) \dx x  +  \int_{\left(\mathcal{O}_{\delta}\right)^c \cap B(0,r_0)^c}\neps(t,x) \dx x \,\\
 &\leq  \int_{\left(\mathcal{O}_{\delta}\right)^c \cap B(0,r_0)}\neps(t,x) \dx x  +  \int_{\left(\mathcal{O}_{\delta}\right)^c \cap B(0,r_0)^c} e^{\frac{-F_4 |x|}{2\varepsilon}} \dx x \, .
 \end{aligned}
\end{equation}
Finally it remains to control the integral on $\left(\mathcal{O}_{\delta}\right)^c \cap B(0,r_0)$. Remark that on $\left(\mathcal{O}_{\delta}\right)^c$, for any $t\in[0,T_0)$, $u(t,\cdot) \leq - \frac{\delta}{2} <0$ and there exists $\varepsilon_0 >0$ small enough so that $\forall \varepsilon < \varepsilon_0$, on $\left(\mathcal{O}_{\delta}\right)^c$, $\ueps(t,\cdot) \leq - \frac{\delta}{4}$. We then deduce that
\begin{displaymath}
0\leq \int_{\left(\mathcal{O}_{\delta}\right)^c \cap B(0,r_0)}\neps(t,x) \dx x \leq  \int_{\left(\mathcal{O}_{\delta}\right)^c \cap B(0,r_0)} e^{-\frac{\delta}{4\varepsilon}} \dx x
\leq  |B(0,r_0) | e^{-\frac{\delta}{4\varepsilon}} \,.
\end{displaymath}
Combining with \eqref{eq:1},  $\int_{(\mathcal{O}_{\delta})^c} \neps(t,x)\dx x$ goes to $0$ as $\varepsilon\to 0$ for every $t\in[0,T_0)$.

\noindent We now consider $\int_{\mathcal{O}_{\delta}} \neps(t,x) \dx x$ on $(0,T_0)$. Let $\varphi_{\varepsilon} \in \mathcal{C}^{\infty}_{c,+}(A_{C/2})$ be a test function such that $\varphi_{\varepsilon} \equiv 1$ in $\mathcal{O}_{\delta}$, and such that $\parallel D^2 \varphi_{\varepsilon} \parallel_{\infty} <\frac{1}{\varepsilon}$. Such test function exists for $\epsilon$ small enough. Then, for $t\in [0,T_0)$,
$$  0\leq \int_{\mathcal{O}_{\delta}} \neps(t,x)\dx x\leq \intRd \varphi_{\varepsilon}(x) \neps(t,x) \dx x. $$ 
Moreover,
\begin{displaymath}
\begin{aligned}
\frac{\dx }{\dx t} \intRd \varphi_{\varepsilon}(x) \neps(t,x) \dx x
&= \varepsilon \intRd \Delta \varphi_{\varepsilon} \neps(t,x) \dx x + \frac{1}{\varepsilon} \intRd \varphi_{\varepsilon}(x) \neps(t,x) R(x,\Ieps(t))\dx x\,,\\
&\leq \varepsilon \parallel \Delta \varphi_{\varepsilon} \parallel_{\infty} \rho_M + \frac{1}{\varepsilon} \intRd \varphi_{\varepsilon}(x) \neps(t,x) R(x,0)\dx x,\\
&< \rho_M - \frac{C}{2\varepsilon} \intRd \varphi_{\varepsilon}(x) \neps(t,x) \dx x \,,\\
\end{aligned}
\end{displaymath}
using Assumption \eqref{hyp:dRI}.
It follows that on $(0,T_0)$, and for $\overline{C}>0$ some constant whose value can change from line to line, we have
\begin{equation*}
\begin{aligned}
\intRd \varphi_{\varepsilon}(x) \neps(t,x) \dx x &< \intRd \varphi_{\varepsilon}(x) \neps(0,x) \dx x e^{-\frac{C}{2\varepsilon}t} + \frac{2\rho_M}{C} \varepsilon \left(1- e^{-\frac{C}{2\varepsilon}t} \right)\,,\\
& <\overline{C} \left( e^{-\frac{C}{2\varepsilon}t} +  \varepsilon \right) \underset{\varepsilon \rightarrow 0}{\rightarrow}0\,,
\end{aligned}
\end{equation*}
using Assumption \eqref{hyp:I0}.
This concludes the proof of Theorem \ref{theo:critere_asympt} ii).

\subsection{Asymptotic extinction at a time point}
We show now the last assertion of Theorem \ref{theo:critere_asympt}, namely that if 
\begin{equation*}
\Gamma_0 \subseteq \{ x \in \Rd,\, R(x,0)\leq 0\}\,,
\end{equation*}
then, for any $T<+\infty$,
\begin{equation}\label{eq:extinction_ponctuelle}
\forall \lambda>0,\, \exists \varepsilon_{\lambda} >0,\, \forall \varepsilon < \varepsilon_{\lambda},\, \exists t_{\varepsilon}\in [0,T],\, \Ieps(t_{\varepsilon}) < \lambda\,.
\end{equation}
Recall that the space of possible concentration points of the population at the limit writes $\Gamma_t:= \{x \in \Rd,\, u(t,x)=0 \}$.
We prove the result with a contradiction argument. If \eqref{eq:extinction_ponctuelle} is not true, then $\exists T,\, \lambda >0$, $\exists (\varepsilon_k)_k \rightarrow 0$ such that $\forall t \in [0,T]$, $\Iepsk(t) \geq \lambda > 0$. Then, following 
Proposition \ref{prop:cv_I_survie}, $(\Iepsk)_{k}$ converges on $(0,T)$ towards a function $I:t \mapsto I(t) \geq \lambda$, and by Theorem \ref{theo:CV_ueps}, $u$ is then a viscosity solution of the following constrained Hamilton-Jacobi problem: 
\begin{equation*}
\left \lbrace
\begin{aligned}
&\partial_t u = |\nabla u |^2 + R(x,I(t))\,,\\
&\max_x u(t,x) = 0\,,\\
&\text{Supp }n(t,\cdot) \subset \Gamma_t \subset \{R(\cdot,I(t))=0 \} \text{ for all Lebesgue point of }I\,.
\end{aligned}\right.
\end{equation*}
Now, by assumption \eqref{hyp:dRI}, 
\begin{displaymath}
\forall x \in \Gamma_0,\, R(x,\lambda) < R(x,0) + \sup \partial_I R(x,I) \lambda\leq - K_1^{-1} \lambda <0\,.
\end{displaymath}
By continuity of $u$, there exists $\delta >0$ small enough so that 
\begin{displaymath}
\{ u(0,\cdot) > -2\delta \} \subset \left\{R(\cdot,\lambda) < -\frac{\lambda}{2 K_1} \right\}\,,
\end{displaymath}
and there exists $t_1>0$ small enough so that 
\begin{displaymath}
\forall t \in [0,t_1),\, \{ u(t,\cdot) > -\delta \} \subset \{ u(0,\cdot) > -2\delta \}\,.
\end{displaymath}
Therefore, for $t\in (0,t_1)$ and $x \in \Gamma_t$, we have that 
\begin{displaymath}
0=R(x,I(t)) \leq R(x,\lambda) < -\frac{\lambda}{2 K_1} <0\,,
\end{displaymath}
leading to a contradiction.

\section{The concave case}\label{sec:concave}
In this part, we give the proofs of Theorem \ref{theo:CV_concave} as well as Corollary \ref{theo:CV_concave_variable} that deal with a constant or piecewise constant concave environments. 

\subsection{Proof of Theorem \ref{theo:CV_concave} }

i) One can verify that Assumptions  \eqref{hyp:supR_concave}--\eqref{hyp:conc_I0} imply \eqref{hyp:supR}--\eqref{hyp:n0}. Therefore, one can use Theorem \ref{theo:CV_ueps} which implies that $(\ueps)_{\varepsilon}$ converges locally uniformly to a function $u\in \mathcal{C}([0,\infty)\times \Rd)$ with $u \leq 0$. Moreover, in \cite{Lorz2011} it is proved that under the assumptions of Theorem \ref{theo:CV_concave}, the function $u$ is indeed strictly concave and $u\in L_{\text{loc}}^{\infty} (\R_+;W_{\text{loc}}^{2,\infty}(\Rd)) \cap W_{\text{loc}}^{1,\infty} (\R_+; L_{\text{loc}}^{\infty}(\Rd))$. Consequently, $u(t,\cdot)$ has a unique  maximum point $\overline x(t)$.\\

ii)  Note first that $\text{Supp } n^0=\Gamma_0$ since $\Gamma_0$ has a unique point. Therefore, if $\Gamma_0 \subseteq \{ R(\cdot,0)>0\}$, then   $(\Ieps)_{\varepsilon}$ converges to $I\in  W^{1,\infty}(\R_+)$, with $I>0$, thanks to Theorem \ref{theo:critere_asympt}-(i). Moreover, thanks to Theorem \ref{theo:CV_ueps}-(iii) $u$ is a viscosity solution to \eqref{eq:HJ_theo}-\eqref{eq:HJ_theo_constaint}, together with $u(0,\cdot)=u^0$. In \cite{mirrahimi2016class} it is proved that the viscosity solution $(u,I)$ to such Hamilton-Jacobi equation is indeed unique and smooth. Moreover, under the assumptions of the theorem, the canonical equation \eqref{eq:canonique_theo} can be derived similarly to  \cite{Lorz2011}.\\

iii) Since $\Gamma_0 \subseteq \{ R(\cdot,0)<0\}$, thanks to Theorem   \ref{theo:critere_asympt}-(ii), there exists $T_0>0$ such that $\, \lim_{\varepsilon \rightarrow 0} \Ieps(t)\vert_{(0,T_0)} =0$. We define $T_m$ to be the maximal point such that this property holds. It is immediate that $T_m\geq \overline T$. Otherwise, one can extend $T_m$ to greater values thanks to  Theorem   \ref{theo:critere_asympt}-(ii).  We expect indeed that $T_m = \overline T$.  We next, deduce that $\forall t \in (0,\overline{T}),\, \lim_{\varepsilon \rightarrow 0} \Ieps(t) =0$. Equations \eqref{eq:HJ_conc_ext} and \eqref{eq:canonique_conc_ext} can then be derived similarly to \eqref{eq:HJ_theo} and \eqref{eq:canonique_theo}.

Next, we study $h$ defined by $h(t)= R(\overline{x}(t),0)$. We compute
\begin{equation*}
 h'(t) = \nabla_x R(\overline{x}(t),0) \overset{\cdot}{\overline{x}}(t) = \nabla_x R(\overline{x}(t),0) (-D^2 u(t, \overline{x}(t)))^{-1}   \nabla_x R(\overline{x}(t),0) \,,
\end{equation*}
so that $h$ is increasing while $|\nabla_x R(\overline{x}(t),0)| \neq 0$, and non-decreasing in the general case.
As a consequence, since thanks to the assumptions \eqref{hyp:maxR}--\eqref{hyp:dRI} we have $\max_{x \in \Rd} R(x,0)> 0$, we deduce that $\overline T<+\infty$.
We provide some estimates for $\overline{T}$. From \cite{mirrahimi2016class} (Theorem 1.1), and using the estimates \eqref{hyp:conc_D2R} and \eqref{hyp:conc_D2u0}, we have that on $[0,\overline{T}]\times \Rd$,
\begin{equation*}
0< \min(2 \overline{L}_1, \sqrt{\overline{K}_2}) \leq -D^2u(t,x) \leq \max(2 \underline{L}_1, \sqrt{\underline{K}_2})\,.
\end{equation*}
Moreover, by the concavity assumption of $R$, we have that for $t\in (0,\overline{T})$, 
\begin{equation*}
|\nabla_x R(\overline{x}(\overline{T}),0)|^2 \leq |\nabla_x R(\overline{x}(t),0)|^2 \leq |\nabla_x R(\overline{x}(0),0)|^2\,.
\end{equation*}
It leads to 
\begin{equation*}
\frac{1}{ \max(2 \underline{L}_1, \sqrt{\underline{K}_2})} |\nabla_x R(\overline{x}(\overline{T}),0)|^2 \leq 
h'(t)\leq  \frac{1}{\min(2 \overline{L}_1, \sqrt{\overline{K}_2})}  |\nabla_x R(\overline{x}(0),0)|^2\,.
\end{equation*}
 The result then follows from this estimate combined with the equality
\begin{equation*}
\int_0^{\overline{T}} h'(t) \dx t = -h(0)\,.
\end{equation*}

\subsection{Proof of  Corollary \ref{theo:CV_concave_variable}}
In this part, we prove Corollary \ref{theo:CV_concave_variable} that describes the dynamics of the population when a switch in the environment occurs. First, the convergence of $(\ueps)_{\varepsilon}$ and the one of $(\neps)_{\varepsilon}$ for $t\in [0,T_1)$ follows from Theorem \ref{theo:CV_concave}, that also yields point $i)$ and that the $\ueps(T_1,x)$ are uniformly strictly concave and verify Assumption \eqref{hyp:conc_D2u0}. By definition of $\ueps(T_1,x)$ and $\Ieps(T_1)$ and by continuity of $u$ and $I$, we have that $I(T_1) = \lim_{t\rightarrow T_1^-} I(t)$ satisfies Assumption \eqref{hyp:conc_I0}, and that $u(T_1,x)= \lim_{t \rightarrow T_1} u(t,x)$ is well-defined and strictly concave. Therefore,  the convergence of $(\ueps)_{\varepsilon}$ and of $(\neps)_{\varepsilon}$ for $t\in [T_1,T_2)$ follows from Theorem \ref{theo:CV_concave}. Moreover, $\Gamma_{T_1}=\{\overline{x}(T_1)\}$. As a consequence, point $(ii)$ follows from Theorem \ref{theo:critere_asympt}, and $iii)$ follows from Theorem \ref{theo:critere_asympt} combined with Theorem \ref{theo:CV_concave}.

\section{Numerics and application to switching environments}\label{sec:numerics}
In this part, we perform some numerical simulations of \eqref{eq:neps}-\eqref{eq:Ieps} and \eqref{eq:neps_env} to illustrate the selection-mutation dynamics in temporally constant and piecewise constant environments. For that purpose, we use a finite difference scheme with an implicit time discretization scheme, at the exception of the nonlinear term $I_{\varepsilon}(t)$ that is treated explicitly. 

\subsection{Constant environment}
We begin with numerical simulations of the problem \eqref{eq:neps}-\eqref{eq:Ieps} that illustrate Theorem \ref{theo:critere_asympt}. We consider the growth rate given by
\begin{equation*}
R(x,\Ieps) = a(x) - \Ieps\,,
\end{equation*}
with $a$ to define. 

\subsubsection*{Asymptotic persistence and extinction on a time interval}
We choose $a$ as a quadratic function with
\begin{equation}\label{eq:fig1_a}
 a(x) = 0.25 -  x^2 \,,
\end{equation}
so that $a$ is strictly positive in $(-0.5,0.5)$. We choose two expressions for the initial condition, in order to illustrate the two first cases of Theorem \ref{theo:critere_asympt}. They are given by (see Figure \ref{fig12:R_CI})
\begin{equation}\label{fig1_CI}
\neps^0(x) = \frac{\Ieps(0)}{c-b} \mathds{1}_{[b,c]}\,,\quad \Ieps(0)=0.2,
\end{equation}
\begin{equation}\label{fig1_CRbis}
\text{Case 1: }(b,c)=(-0.6,-0.4),\qquad \text{Case 2: }(b,c)=(-0.7,-0.6).
\end{equation}
In the first situation, a part of the initial population is composed of individuals having a positive growth rate in the absence of competition. In this case  we have asymptotic persistence of the population. One can see that $\neps$ evolves towards the best trait $x=0$ (see Figure \ref{sub12pers:sol}). For small times, the population size drops (see Figure \ref{sub12pers:Rho}) as a result of the extinction of the part of the population that is not viable, but it does not reach zero as the population seems to be sustained thanks to larger trait values (see Figure \ref{sub12pers:sol_ext}).
In the second case, the initial population size vanishes near $t=0$ and the population gets extinct asymptotically (see Figures \ref{sub12ext:sol} and \ref{sub12ext:sol_ext}). After some time, the population grows again at some trait values having a nonnegative growth rate in the absence of competition (see Figure \ref{sub12ext:Rho}). This phenomenon is surprising, and shows a limitation of this asymptotic approach. While the population initially gets extinct asymptotically, when $\eps>0$ still a very small population persists and evolves gradually  towards better traits and after some time the population becomes viable and may grow again.
\begin{figure}[!ht]
  \begin{center}
  	\subfloat[$R(x,0)$ and $\neps(0,x)$]{
	\includegraphics[scale=0.6]{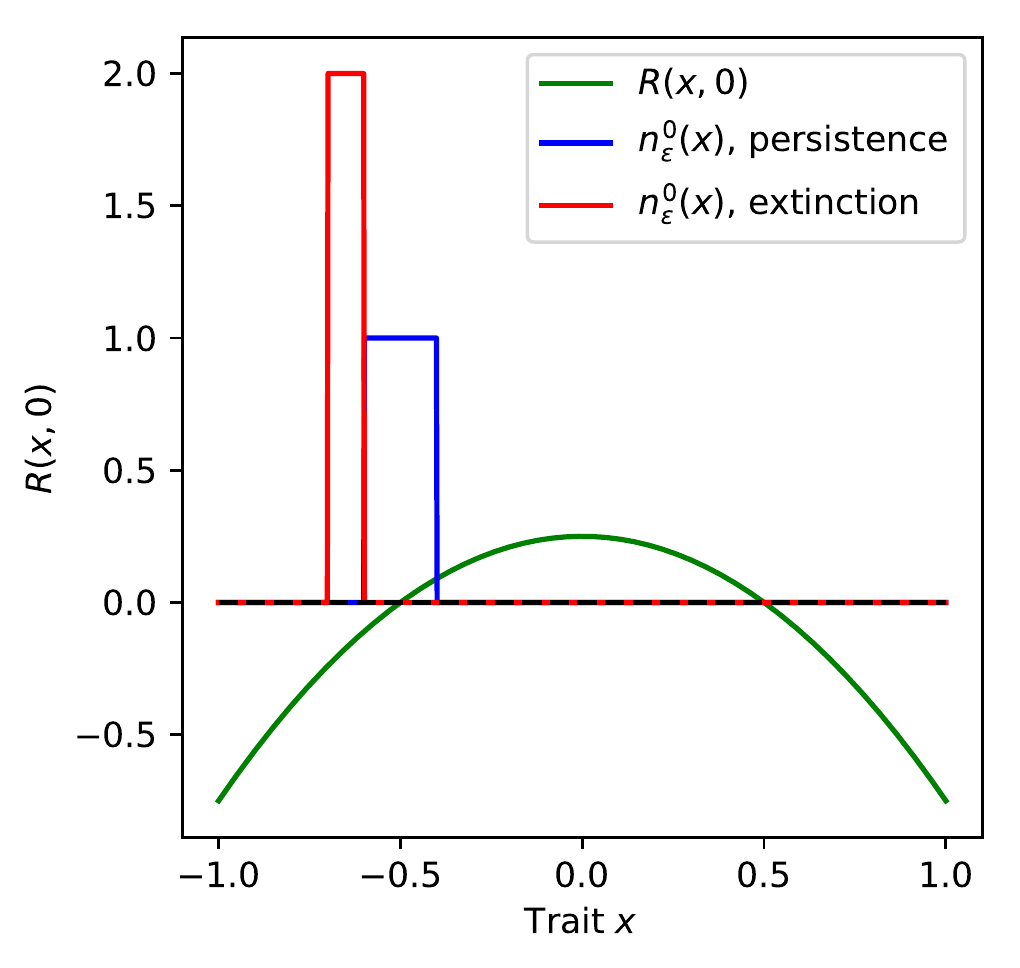} \label{fig12:R_CI}  
	}\\
	\rotatebox{90}{\hspace{1cm} Persistence case}\quad
    \subfloat[$\neps(t,x)$]{
      \includegraphics[width=0.3\textwidth]{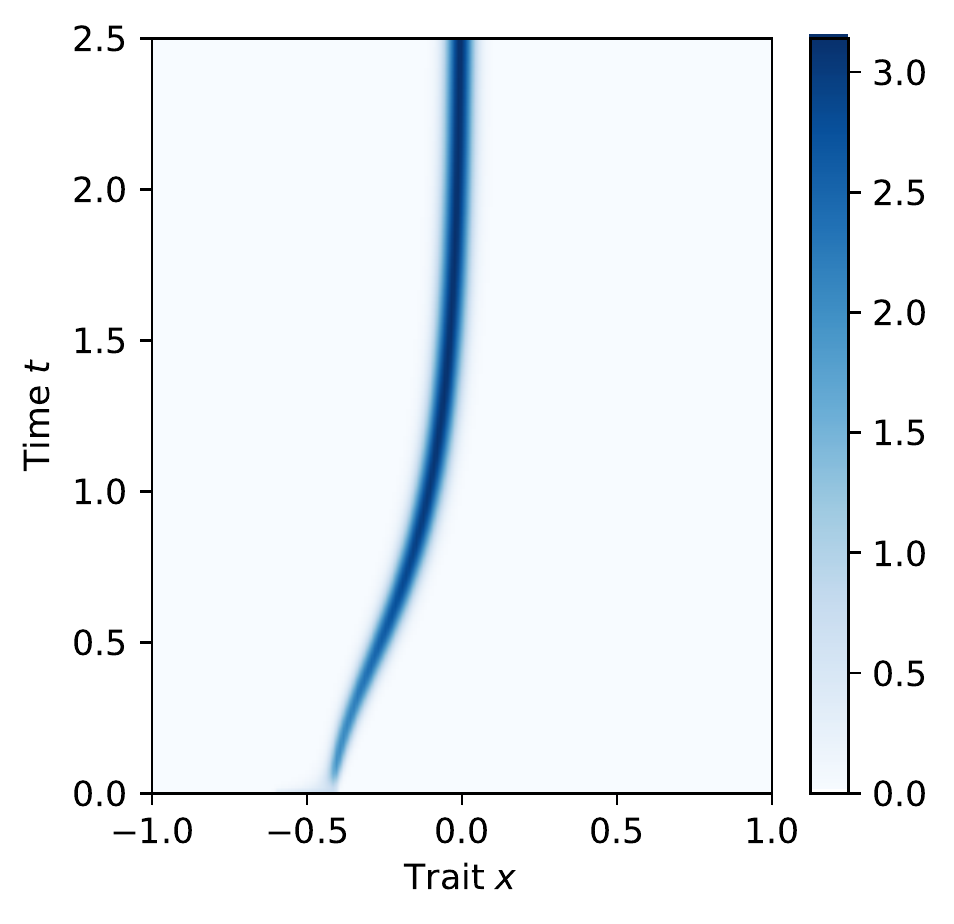}
      \label{sub12pers:sol}
                         }
    \subfloat[$\neps(t,x)$ for $t$ small]{
      \includegraphics[width=0.3\textwidth]{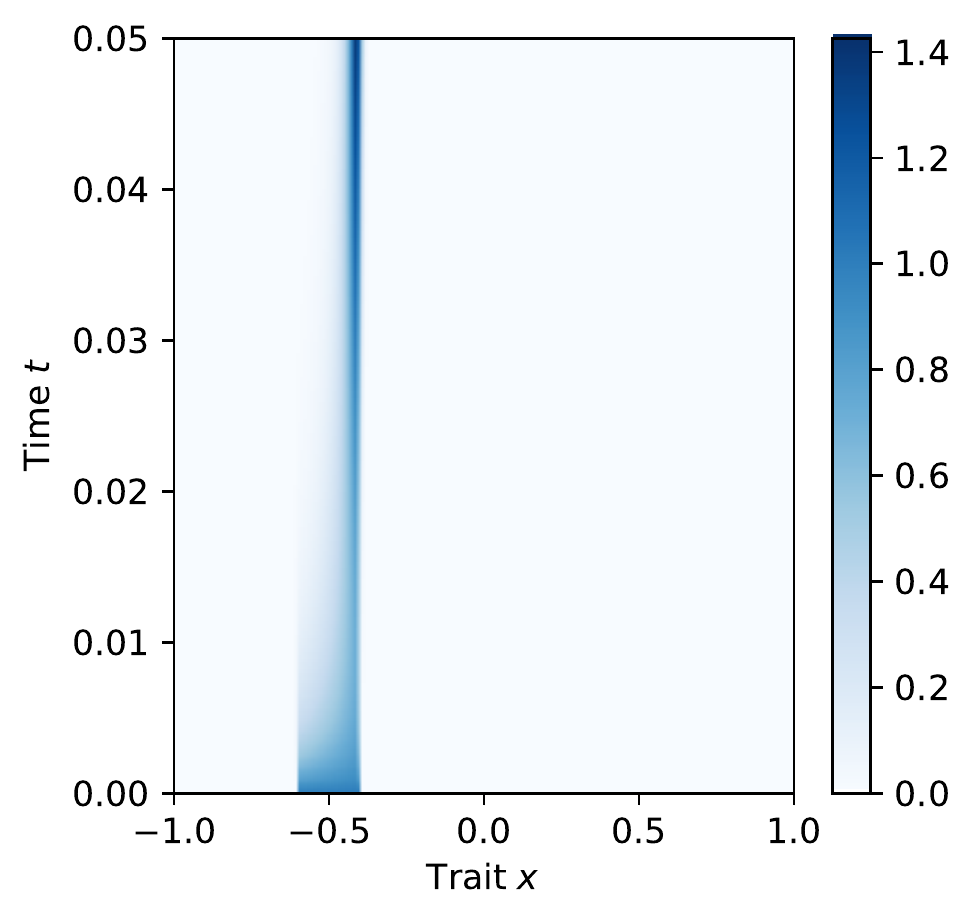}
      \label{sub12pers:sol_ext}
                         }
    \subfloat[$\rhoeps(t)= \intRd \neps(t,x) \dx x$]{
      \includegraphics[width=0.3\textwidth]{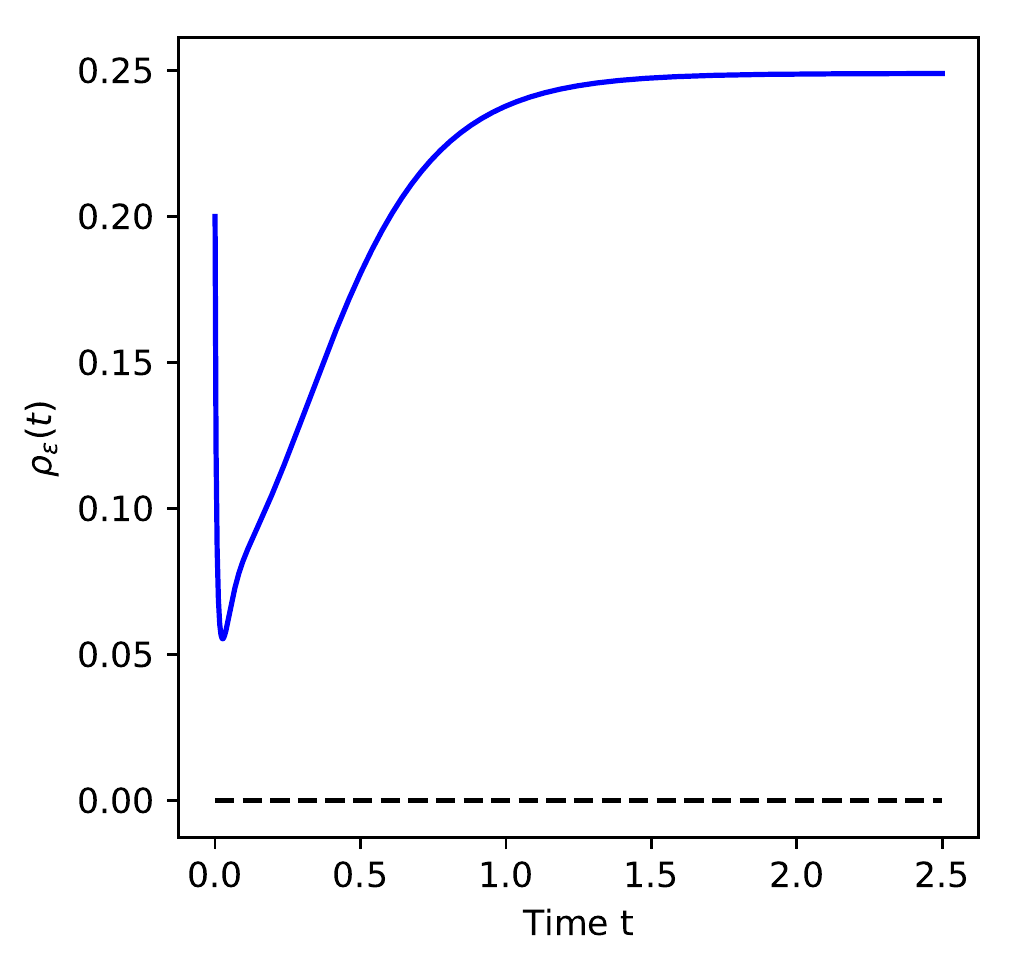}
      \label{sub12pers:Rho}
                         }\\
      \rotatebox{90}{\hspace{1cm} Extinction case}\quad
         \subfloat[$\neps(t,x)$]{
      \includegraphics[width=0.3\textwidth]{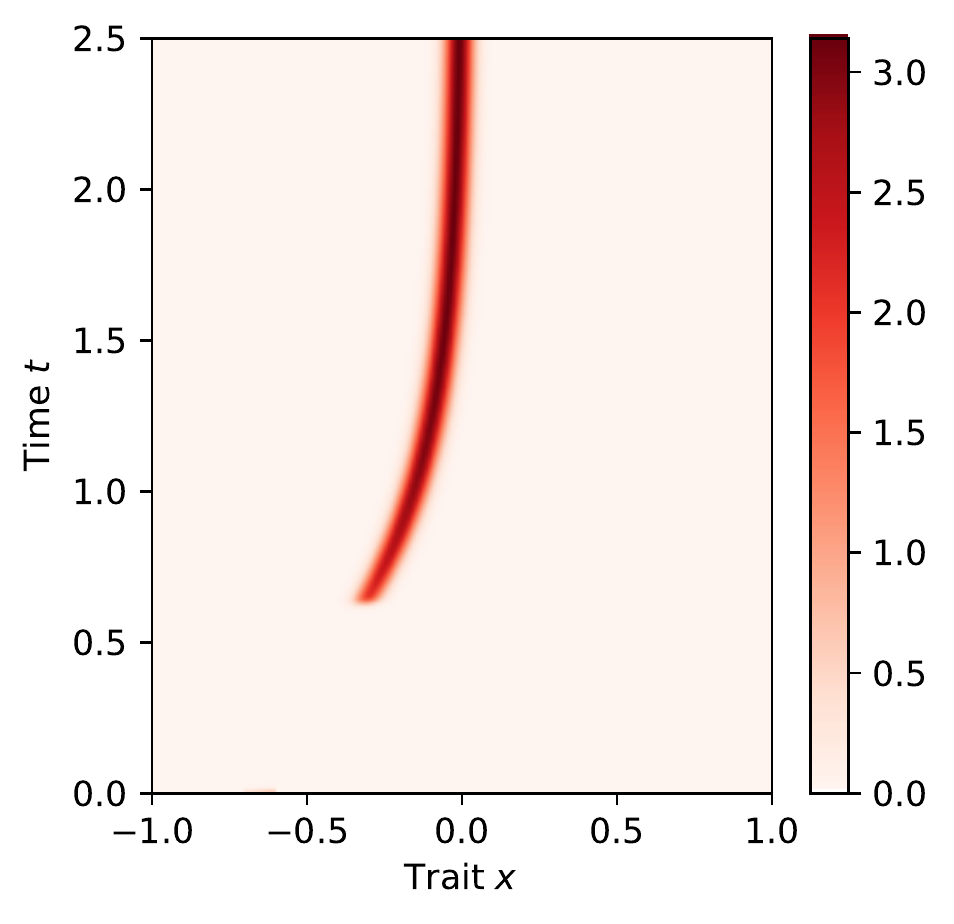}
      \label{sub12ext:sol}
                         }
    \subfloat[$\neps(t,x)$ for $t$ small]{
      \includegraphics[width=0.3\textwidth]{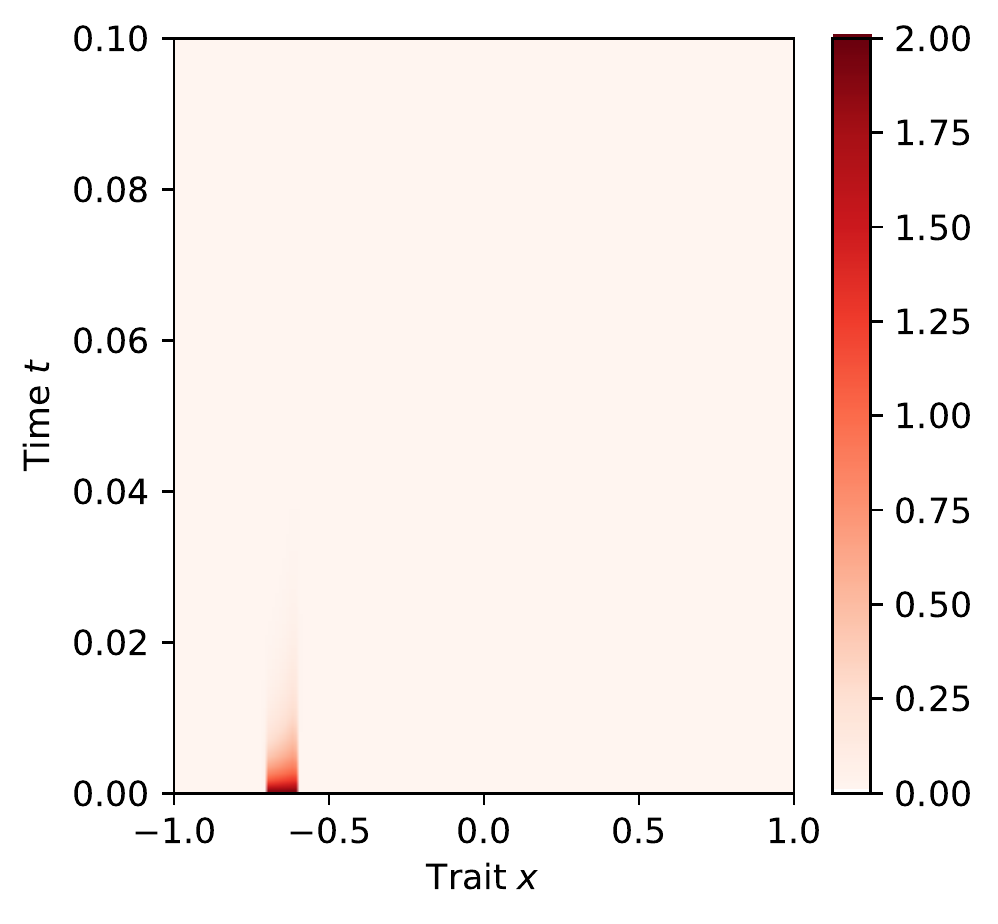}
      \label{sub12ext:sol_ext}
                         }
    \subfloat[$\rhoeps(t)= \intRd \neps(t,x) \dx x$]{
      \includegraphics[width=0.3\textwidth]{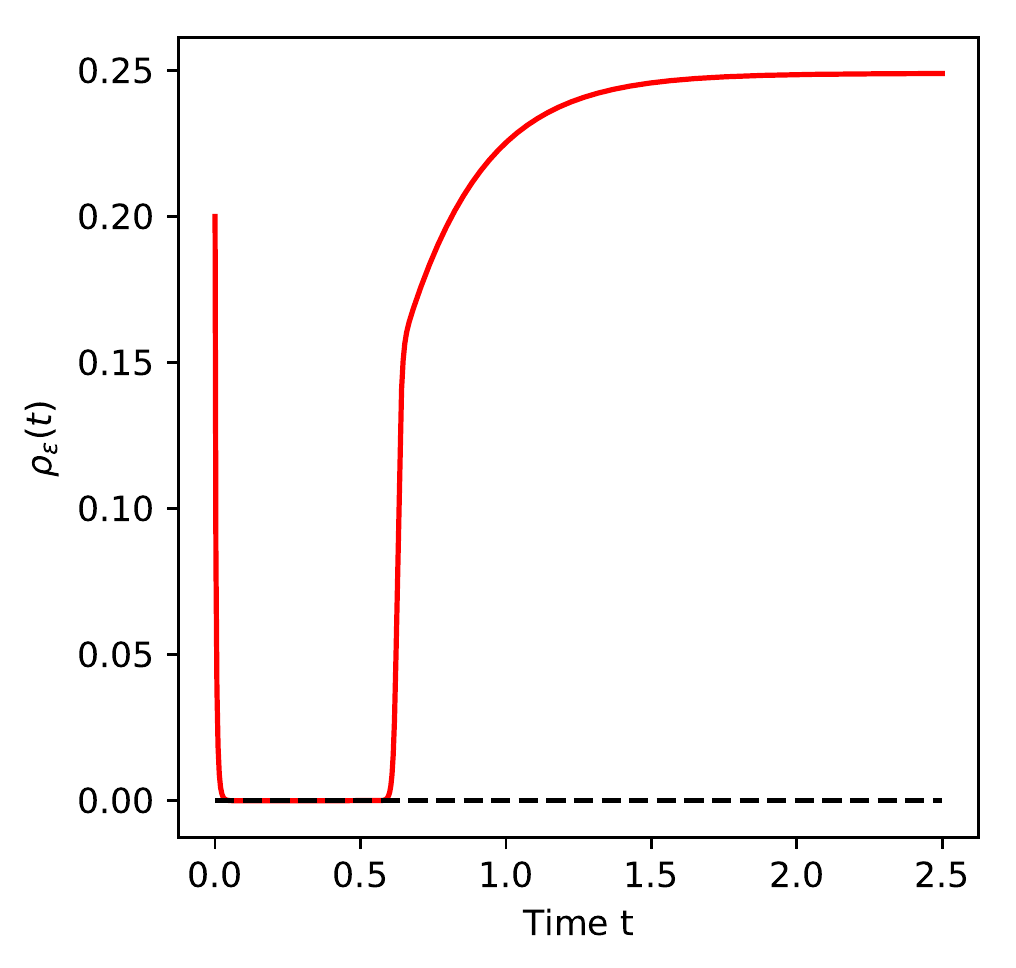}
      \label{sub12ext:Rho}
                         }
    \caption{Numerical simulations of \eqref{eq:neps}-\eqref{eq:Ieps} for $R(x,0)$ given by \eqref{eq:fig1_a} and initial conditions given by \eqref{fig1_CI}--\eqref{fig1_CRbis}. (a): initial conditions and growth rate $R$. (b)-(c)-(d): situation when Supp $n^0 \cap \{x\in \Rd,\, R(x,0)>0 \} \neq \emptyset$. The population density evolves towards better trait values, while the population size stays strictly bounded away from zero. (e)-(f)-(g): situation when $\Gamma_0 \subseteq \{x \in \Rd,\, R(x,0)\leq 0 \}$ for some $C>0$. The population size almost immediately drops to very small values, before growing again when the population density concentrates around trait values that have a positive growth rate. Parameters: $r=0.25$, $g=1$, $\Ieps(0)=0.2$ ; $\dx x = \varepsilon = 10^{-3}$ ; $\dx t = 10^{-4}$.
    }
    \label{fig1:constant_env}
  \end{center}
\end{figure}

\subsubsection*{Asymptotic extinction: critical case}
Figure \ref{fig2:constant_critique} illustrates the third situation described in Theorem \ref{theo:critere_asympt}, occurring when $\Gamma_0 \subseteq \{R(\cdot,0)=0 \}$. For that purpose, we consider
\begin{equation}\label{eq:theo3_R}
R(x,0) = a(x) = -x^2 (x-0.75) (x-2)\,.
\end{equation}
Moreover, we consider two initial conditions given by
\begin{equation}\label{eq:theo3_CI}
\neps^{0,1}(x) = \frac{\Ieps(0)}{\sqrt{2\pi \varepsilon}} e^{- \frac{x^2}{2 \varepsilon}}\,, \qquad
\neps^{0,2}(x) = \frac{\Ieps(0)}{\sqrt{2\pi \varepsilon}} e^{- \frac{(x-0.75)^2}{2 \varepsilon}}\,,  \qquad \Ieps(0)=0.2 \,.
\end{equation}
Figure \ref{fig2:R_CI} shows the corresponding growth function as well as the two initial conditions that we consider. One can see in Figure \ref{fig2:constant_critique} that the solutions behave differently. The solution issued from $\neps^{0,1}$ (Figures \ref{fig2a:sol}-\ref{fig2a:sol_ext}-\ref{fig2a:Rho}) keeps a total population size that is close to zero during a whole time interval, before increasing again very fast from a population density concentrated at trait values having a positive growth rate. Figures \ref{fig2b:sol}-\ref{fig2b:sol_ext}-\ref{fig2b:Rho} illustrate the situation when the initial population is concentrated near values that have a positive growth rate in the absence of competition. In that case, the population size first drops and gets close to zero, but then the mutations allow reaching better traits fast enough to rescue the population. This illustrates the critical situation where asymptotic extinction occurs punctually in time. Note however that here the population size does not reach zero since $\eps>0$.

\begin{figure}[!ht]
  \begin{center}
   \subfloat[$R(x,0)$ and $\neps^0(x)$]{
      \includegraphics[scale=0.6]{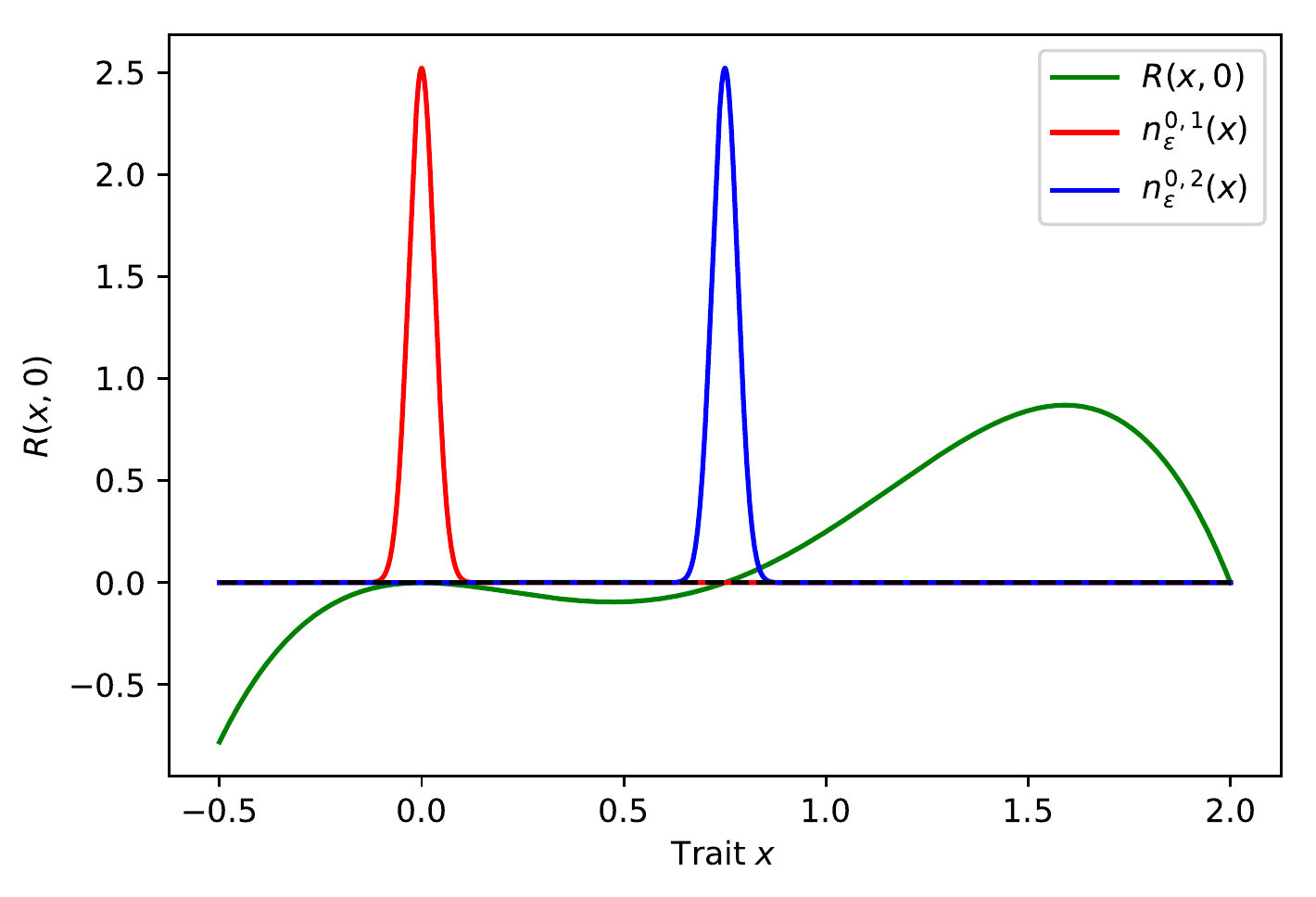} 
      \label{fig2:R_CI}
                         }\\
    \subfloat[$\neps(t,x)$]{
      \includegraphics[width=0.3\textwidth]{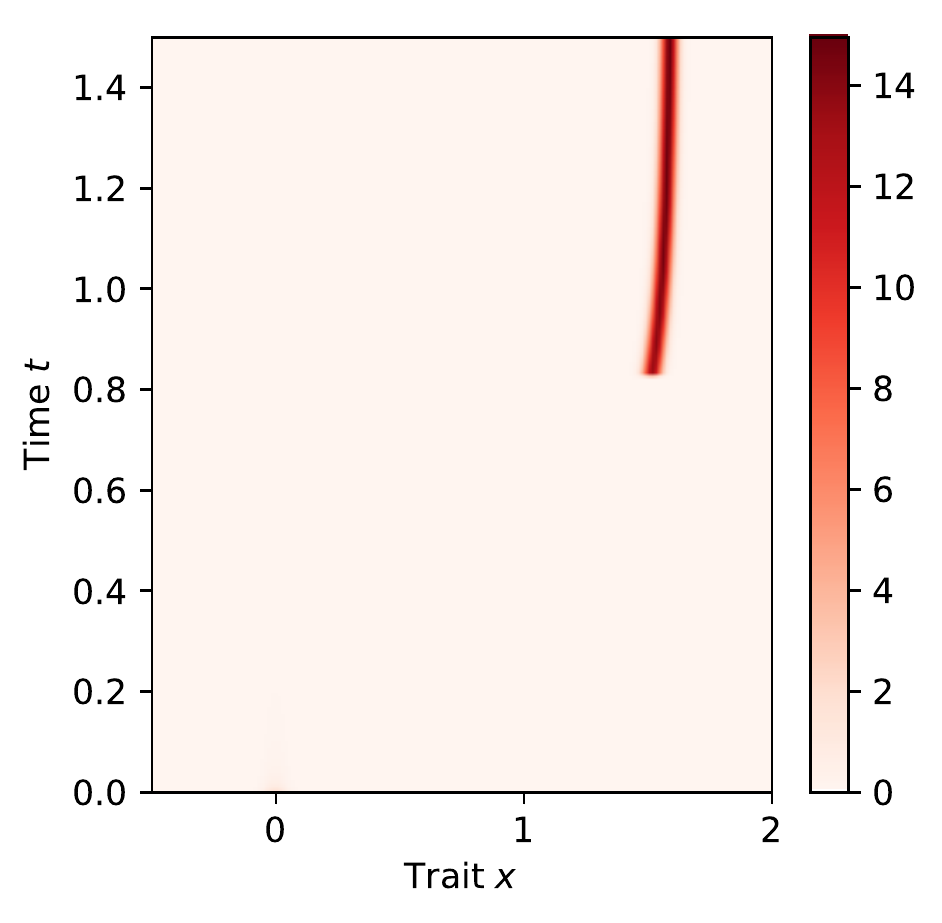}
      \label{fig2a:sol}
                         }
    \subfloat[$\neps(t,x)$ for $t$ small]{
      \includegraphics[width=0.31\textwidth]{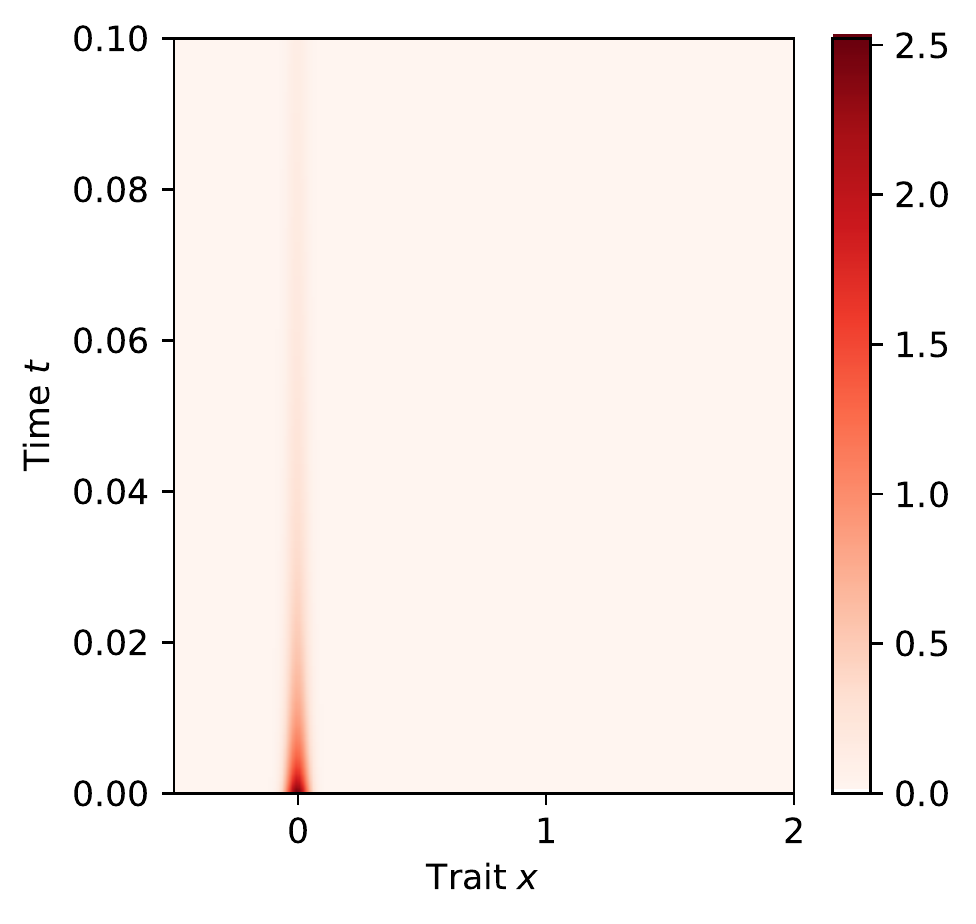}
      \label{fig2a:sol_ext}
                         }
    \subfloat[$\rhoeps(t)= \intRd \neps(t,x) \dx x$]{
      \includegraphics[width=0.3\textwidth]{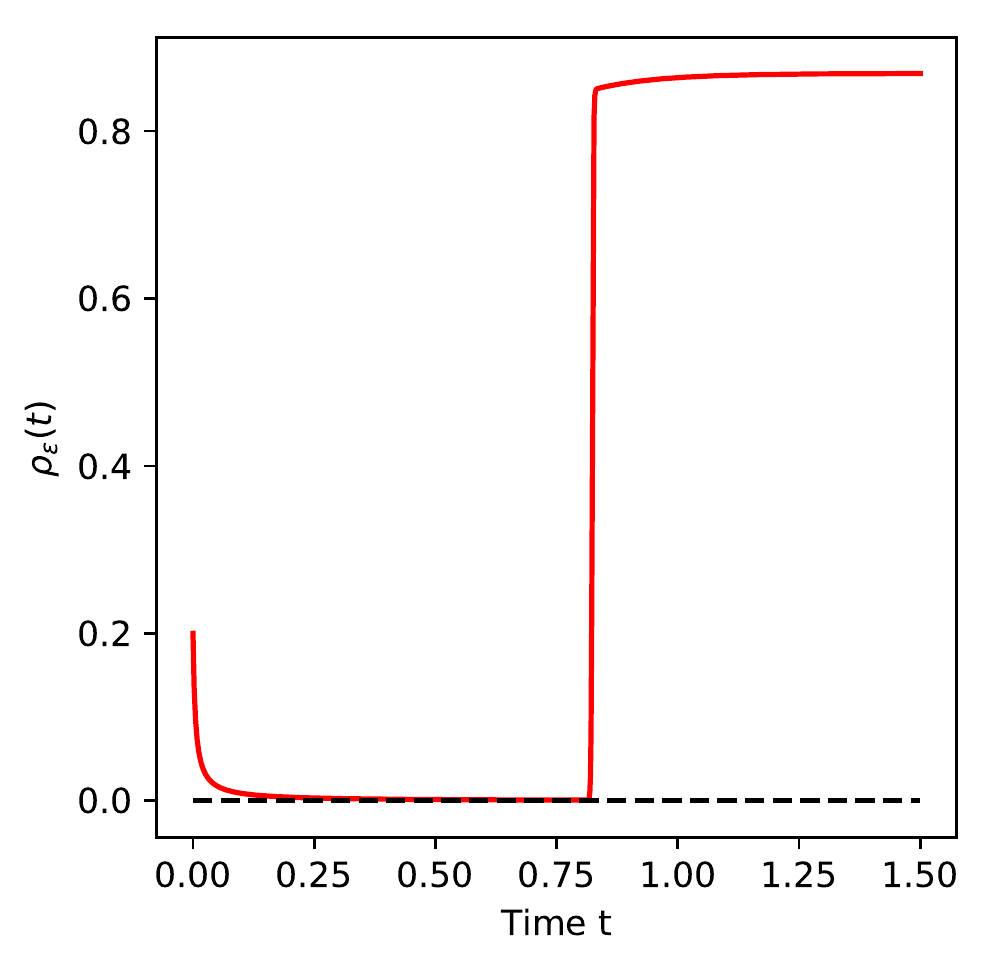}
      \label{fig2a:Rho}
                         }\\
     \subfloat[$\neps(t,x)$]{
      \includegraphics[width=0.3\textwidth]{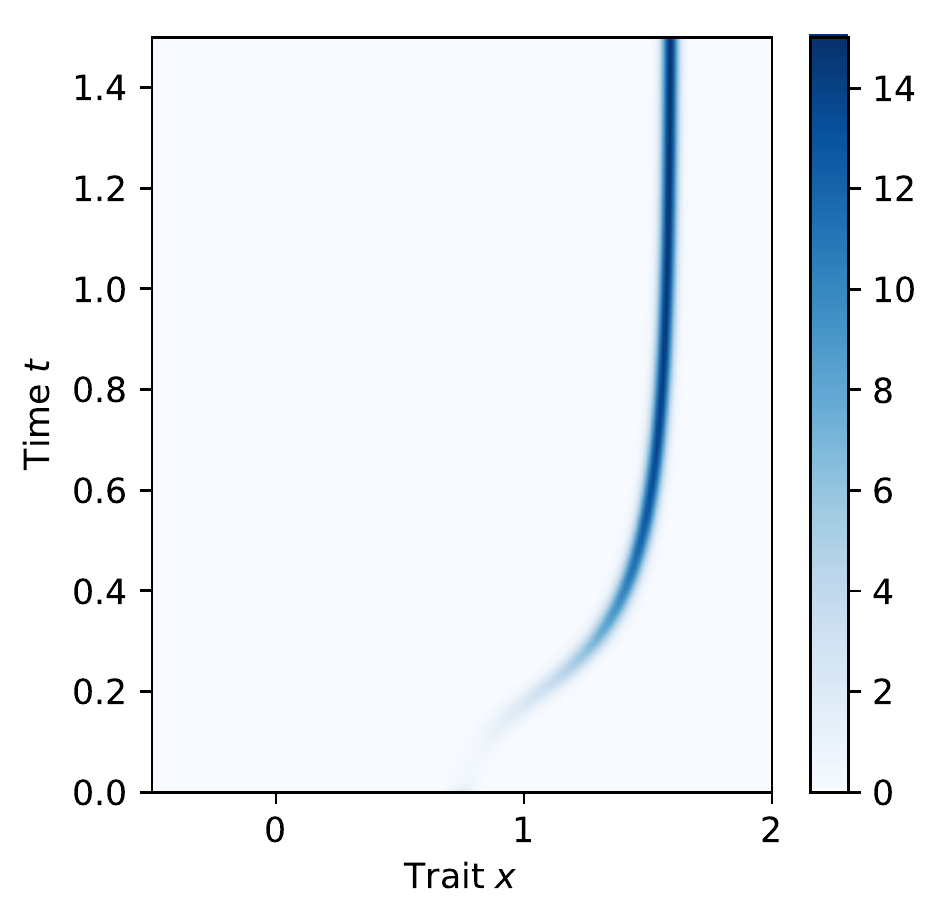}
      \label{fig2b:sol}
                         }
    \subfloat[$\neps(t,x)$ for $t$ small]{
      \includegraphics[width=0.31\textwidth]{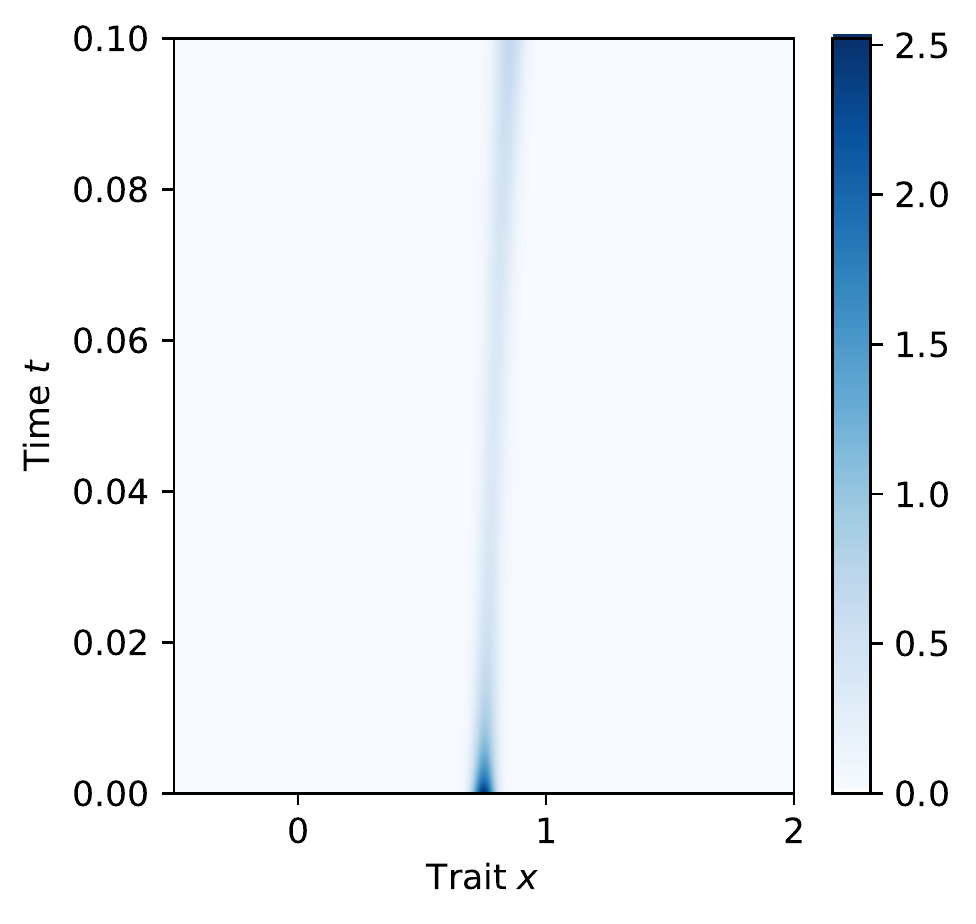}
      \label{fig2b:sol_ext}
                         }
    \subfloat[$\rhoeps(t)= \intRd \neps(t,x) \dx x$]{
      \includegraphics[width=0.3\textwidth]{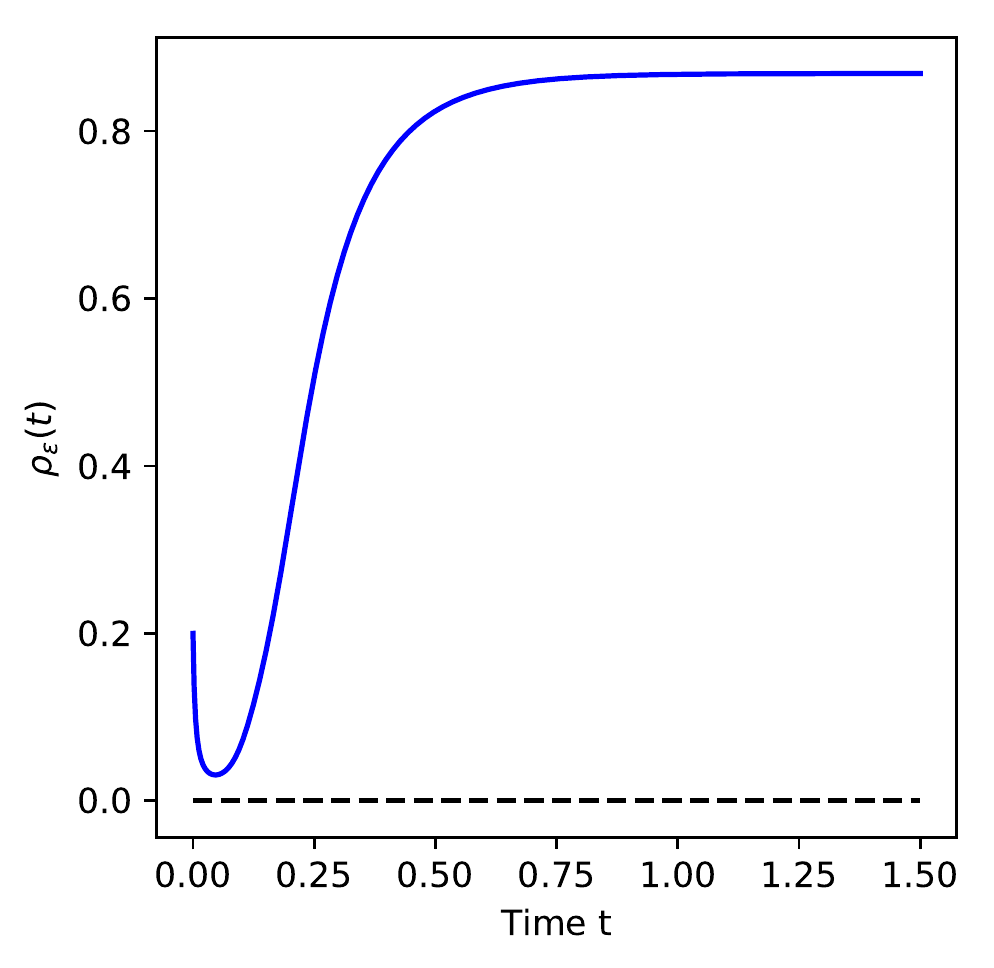}
      \label{fig2b:Rho}
                         }   
\caption{Numerical simulations of \eqref{eq:neps}-\eqref{eq:Ieps} in the case where $\Gamma_0 \subseteq \{R(\cdot,0)=0\}$, for $R(x,0)$ given by \eqref{eq:theo3_R} and initial conditions given by \eqref{eq:theo3_CI}. (a): initial conditions and growth rate $R$. (b)-(c)-(d): situation when the initial population concentrates far from viable traits. The population size drops to very small values on some time interval. (e)-(f)-(g): situation when the initial population concentrates near viable traits. The population size initially drops but rapidly increases away from zero. Parameters: $\Ieps(0)=0.2$ ; $\dx x = \varepsilon = 10^{-3}$ ; $\dx t = 10^{-4}$.}
    \label{fig2:constant_critique}
  \end{center}
\end{figure}

\subsubsection*{Case not treated by Theorem \ref{theo:critere_asympt}}
Figure \ref{fig4} illustrates the situation described in Remark \ref{rema:cas_non_decrit} where Supp $n^0 \subseteq \{R(\cdot,0)\leq 0 \}$ and $\Gamma_0 \cap \{ R(\cdot,0)>0\} \neq \emptyset$. For that purpose, we consider again \eqref{eq:fig1_a}
and 
\begin{equation}\label{eq:fig_Remark_CI}
\neps^0(x) = \frac{0.2}{\sqrt{2 \pi \varepsilon}} \exp\left(- \frac{(x+0.75)^2}{2 \varepsilon}  \right) 
+ \frac{\varepsilon}{\sqrt{2 \pi \varepsilon}} \exp\left(- \frac{x^2}{2 \varepsilon}  \right) \,,
\end{equation}
so that $\Gamma_0=\{-0.75,0\}$ and Supp $n^0 = \{ -0.75\}$, with $R(0,0)>0$ and $R(-0.75,0)\leq 0$. Therefore, $\Gamma_0$ meets a viable trait ($x=0$), but the population size at this point vanishes as $\varepsilon$ goes to zero (see Figure \ref{sub4:CI_R}). In this case, the individuals holding traits with negative growth rates die quickly which leads to a rapid drop of the population size to very small values. Afterwards, the population stabilizes around the best trait and grows again. 

\begin{figure}[!ht]
  \begin{center}
    \subfloat[$\neps^0(x)$ and $R(x,0)$]{
\includegraphics[width=0.3\textwidth]{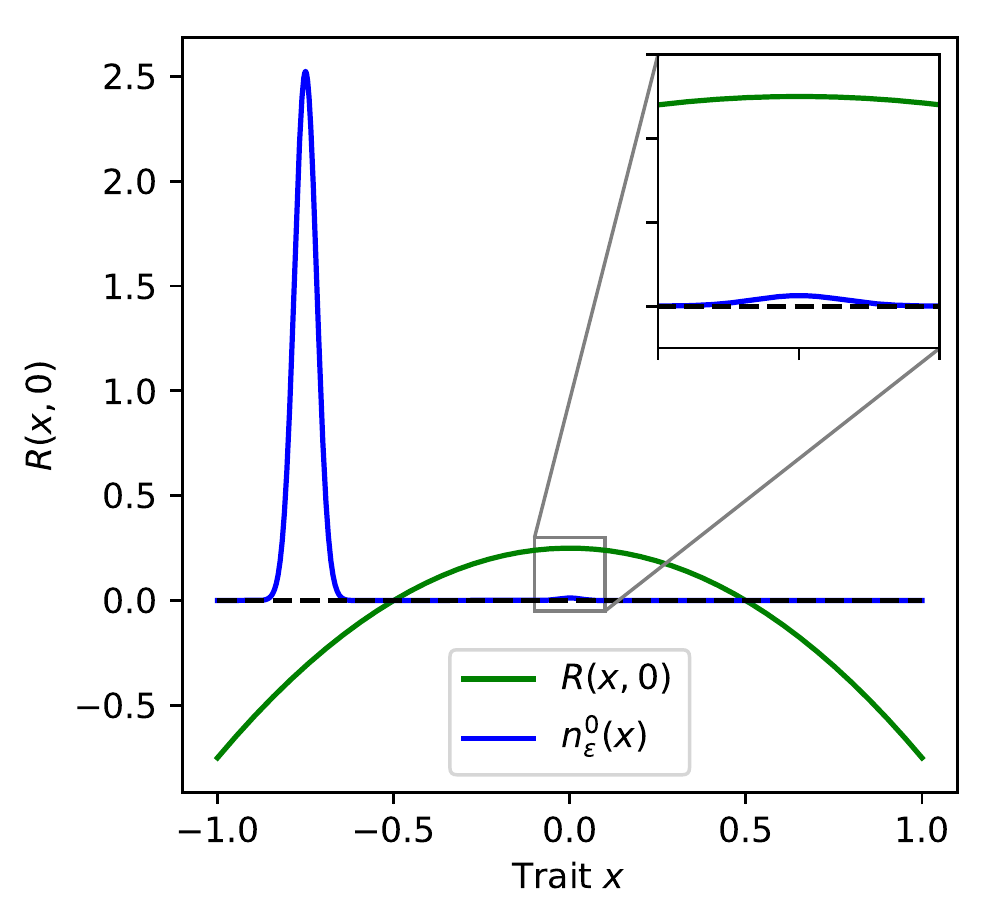} 
      \label{sub4:CI_R}
                         }
    \subfloat[$\neps(t,x)$]{
      \includegraphics[width=0.3\textwidth]{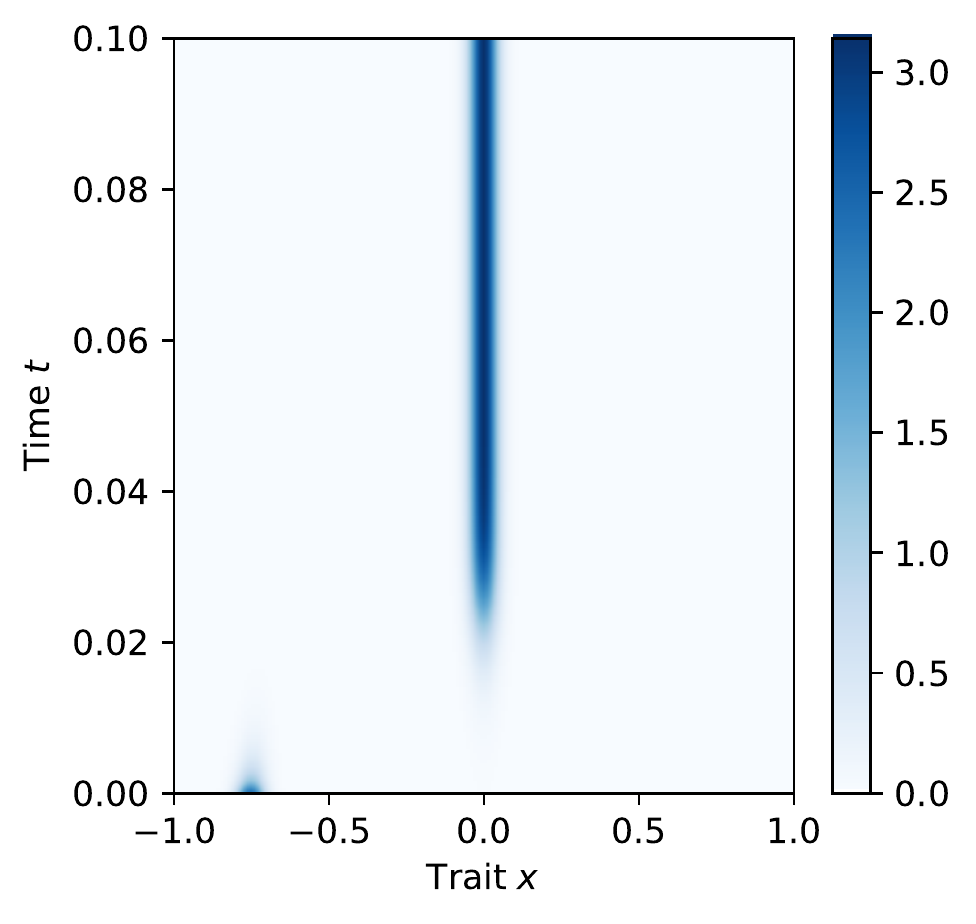}
      \label{sub4:sol}
                         }
    \subfloat[$\rhoeps(t)= \intRd \neps(t,x) \dx x$]{
      \includegraphics[width=0.3\textwidth]{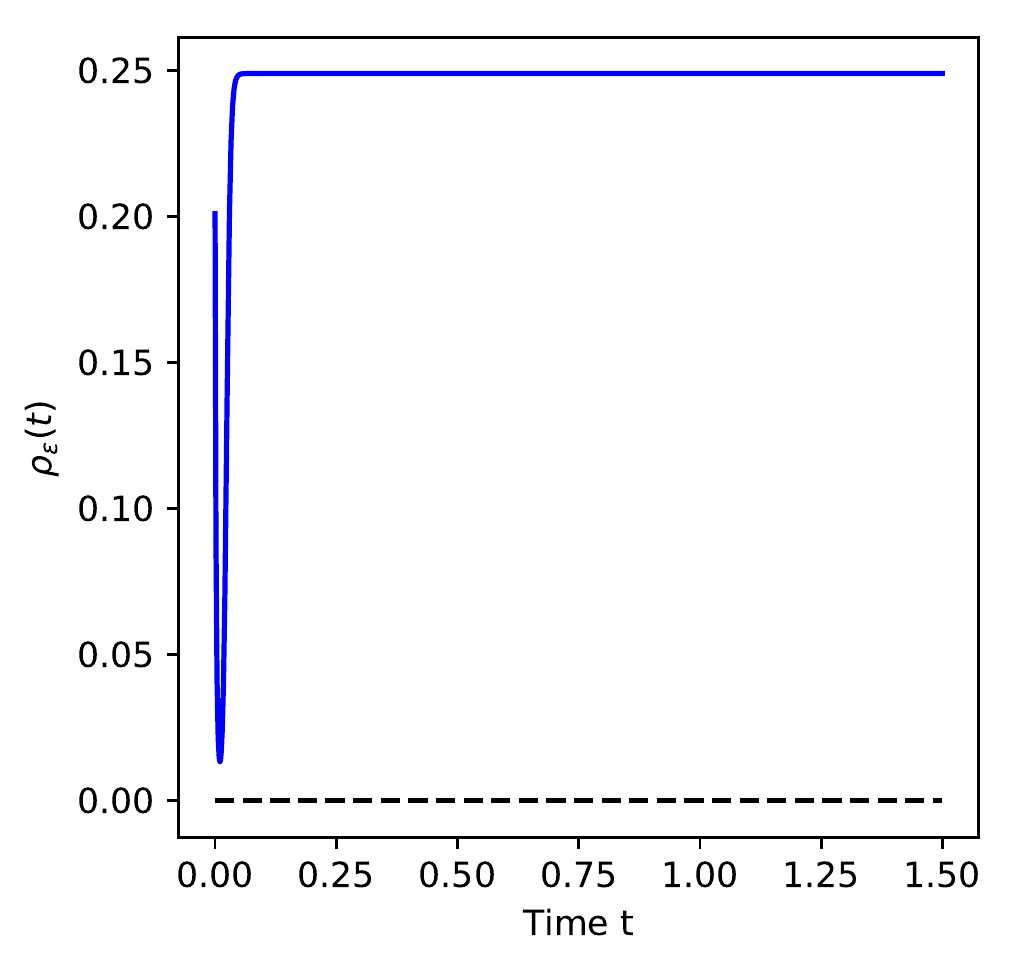}
      \label{sub4:Rho}
                         }
    \caption{Numerical simulations of \eqref{eq:neps}-\eqref{eq:Ieps} in the case of Remark \ref{rema:cas_non_decrit},    
    where Supp $n^0 \subseteq \{R(\cdot,0)\leq 0 \}$ and $\Gamma_0 \cap \{ R(\cdot,0)>0\} \neq \emptyset$, with $a(x)$ given by \eqref{eq:fig1_a} and an initial condition given by \eqref{eq:fig_Remark_CI}. The population density is split between a subpopulation concentrated at non viable trait values, and a subpopulation concentrated at viable trait values, but having a mass going to zero as $\varepsilon \rightarrow 0$. The population size drops immediately as the population carrying traits with negative growth rates disappears, but it quickly increases again towards a positive stationary value, as the population concentrates around the best possible trait. Parameters: $\dx x = \varepsilon = 10^{-3}$ ; $\dx t = 10^{-4}$.}
    \label{fig4}
  \end{center}
\end{figure}

\subsection{Piecewise constant environment}
In this section, we illustrate different phenomena arising in a temporally piecewise constant environment. More precisely, in the case where the environment switches between two states, we consider Problem \eqref{eq:neps_env} for $e$ a periodic function of time with period $T$ such that for $t\in [0,T]$, $e(t)=\mathds{1}_{[0,T/2)}(t) + 2 \mathds{1}_{[T/2,T)}(t)$. Then, for $(t,x) \in [0,T]\times \Rd$, we define the corresponding growth rate $R(x,e(t),\Ieps(t))$ by $R_{e(t)}(x,\Ieps(t))$.
We investigate numerically situations where the period of fluctuations has an effect on the dynamics, whether it acts on the fate of the population, the mean population size, or on the dynamics of the optimal trait.

\subsubsection*{Effect of the period of fluctuations on the fate of the population}
We consider here a periodic switch between two concave growth rates given by
\begin{equation}\label{eq:switch_ext_R}
\begin{aligned}
R_{1}(x, \Ieps(t)) &=  r-g(x+\theta)^2 - \Ieps(t)\quad \text{ and }\quad 
R_2(x, \Ieps(t)) &=  r-g(x-\theta)^2 - \Ieps(t)\,,
\end{aligned}
\end{equation}
with $g \theta^2 < r < 4g \theta^2$ to ensure that there are traits viable in both environments. Here, we take 
\begin{equation}
\label{datafig4}
\theta=0.5,\quad g=1 \quad \text{and}\quad  r=0.5.
\end{equation}
 The initial condition is given by (see Figure \ref{sub5ext:CI_R})
\begin{equation}\label{eq:switch_ext_CI}
\neps^0(x) = \rho_0 \frac{g^{\frac{1}{4}}}{\sqrt{2 \pi \varepsilon}} e^{-\frac{\sqrt{g} X^2}{2 \varepsilon}}\,,\quad \rho_0=0.25\,,
\end{equation}
and allows initial persistence in both environments. Figures \ref{sub5ext:sol} and \ref{sub5ext:Rho} show the evolution of $\neps$ and $\rhoeps$ over the first environmental switch when the period of fluctuation is large ($T=1$). On $[0,T/2)$, the population concentrates on better traits relatively to the first environment. However, these traits have negative growth rates in the second environment. As a result, at switching time, the population is in a situation of asymptotic extinction.

Then, we consider the case where the period of fluctuations is smaller $(T=0.2$), for the same initial condition. In this situation, the period is small enough so that the population remains concentrated in traits which are viable in both environments. The population is therefore persistent, and $\neps$ is periodic in time (see Figures \ref{sub5pers:sol} and \ref{sub5pers:Rho}). This illustrates how the fate of a population may be drastically affected by the timing of environmental fluctuations.

\begin{figure}[!ht]
  \begin{center}
    \subfloat[$R_{1}(x,0), R_2(x,0)$ and $\neps^0(x)$ from \eqref{eq:switch_ext_CI}]{
\includegraphics[width=0.3\textwidth]{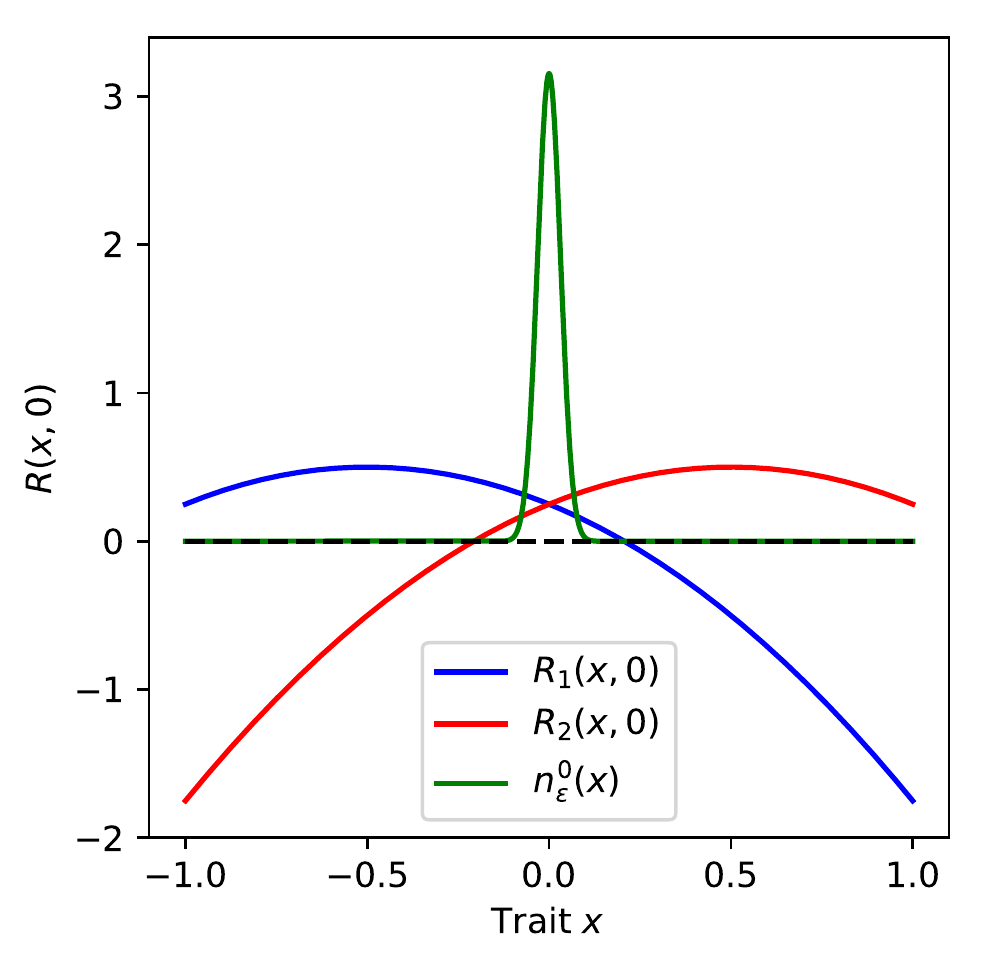} 
      \label{sub5ext:CI_R}
                         }
    \rotatebox{90}{\hspace{2cm} $T=1$}
    \subfloat[$\neps(t,x)$]{
      \includegraphics[width=0.3\textwidth]{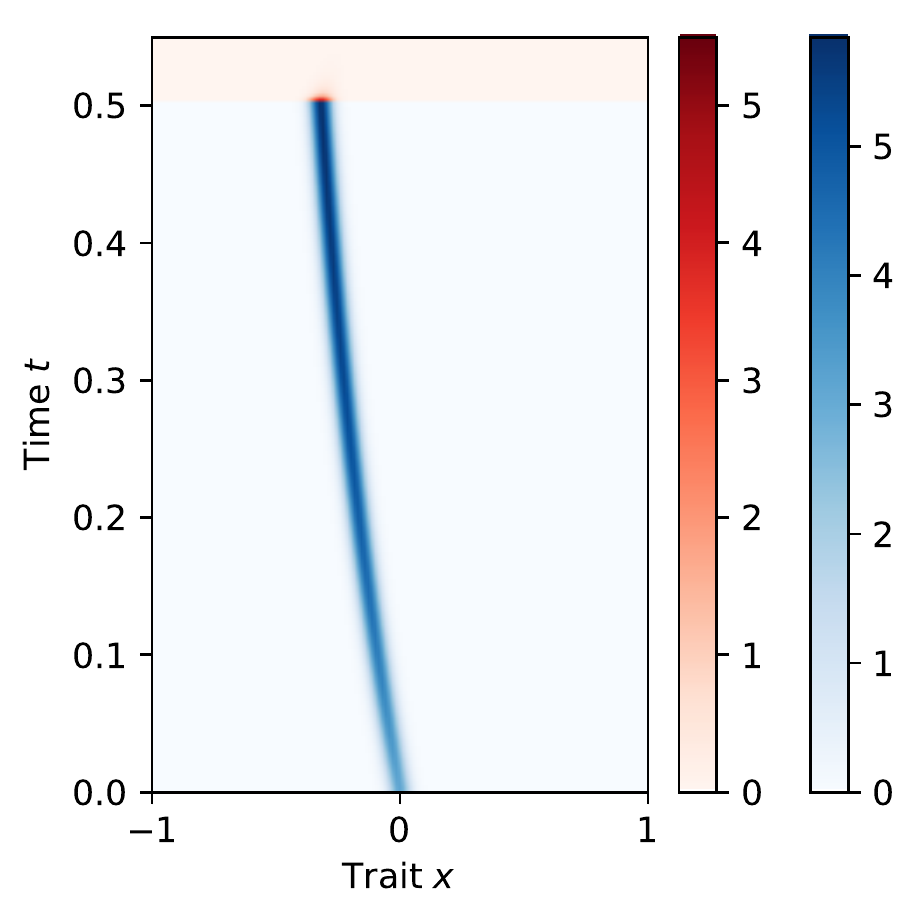}
      \label{sub5ext:sol}
                         }
    \subfloat[$\rhoeps(t)= \intRd \neps(t,x) \dx x$]{
      \includegraphics[width=0.3\textwidth]{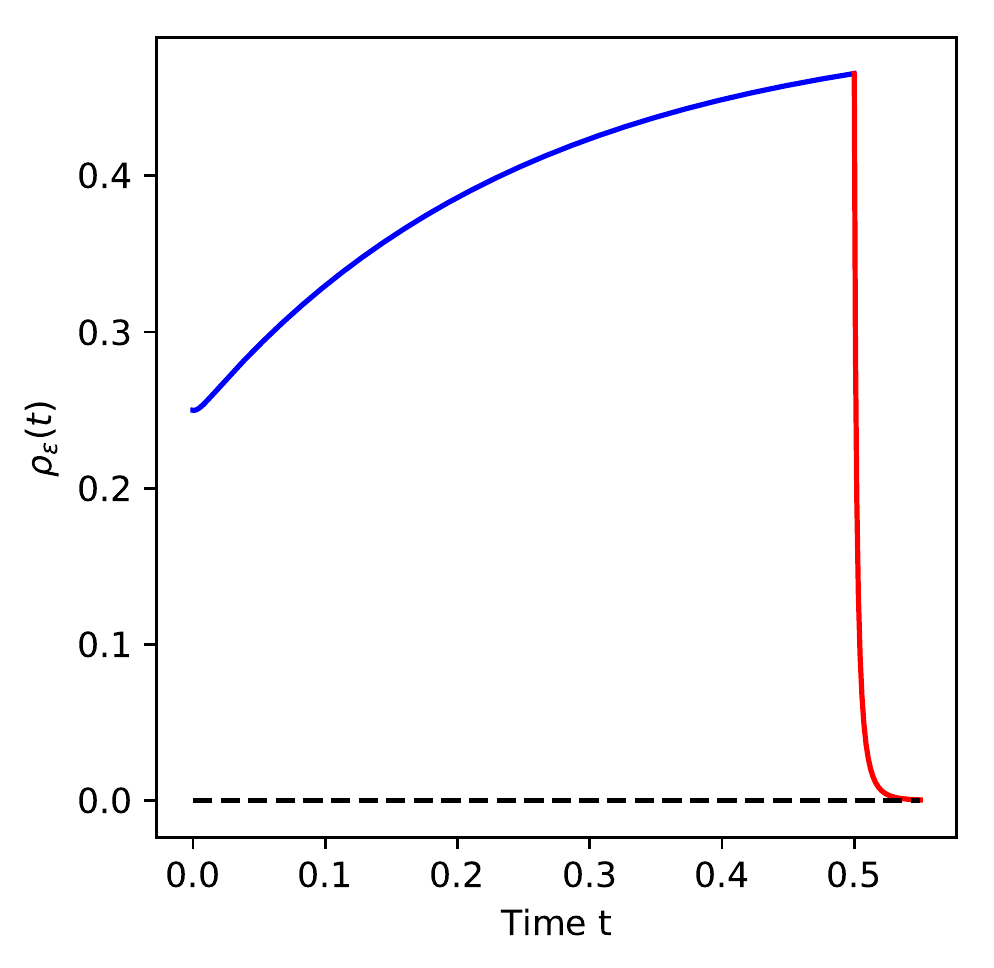}
      \label{sub5ext:Rho}
                         }\\
                         \rotatebox{90}{\hspace{1cm} $T=0.2$}
    \subfloat[$\neps(t,x)$]{
      \includegraphics[width=0.3\textwidth]{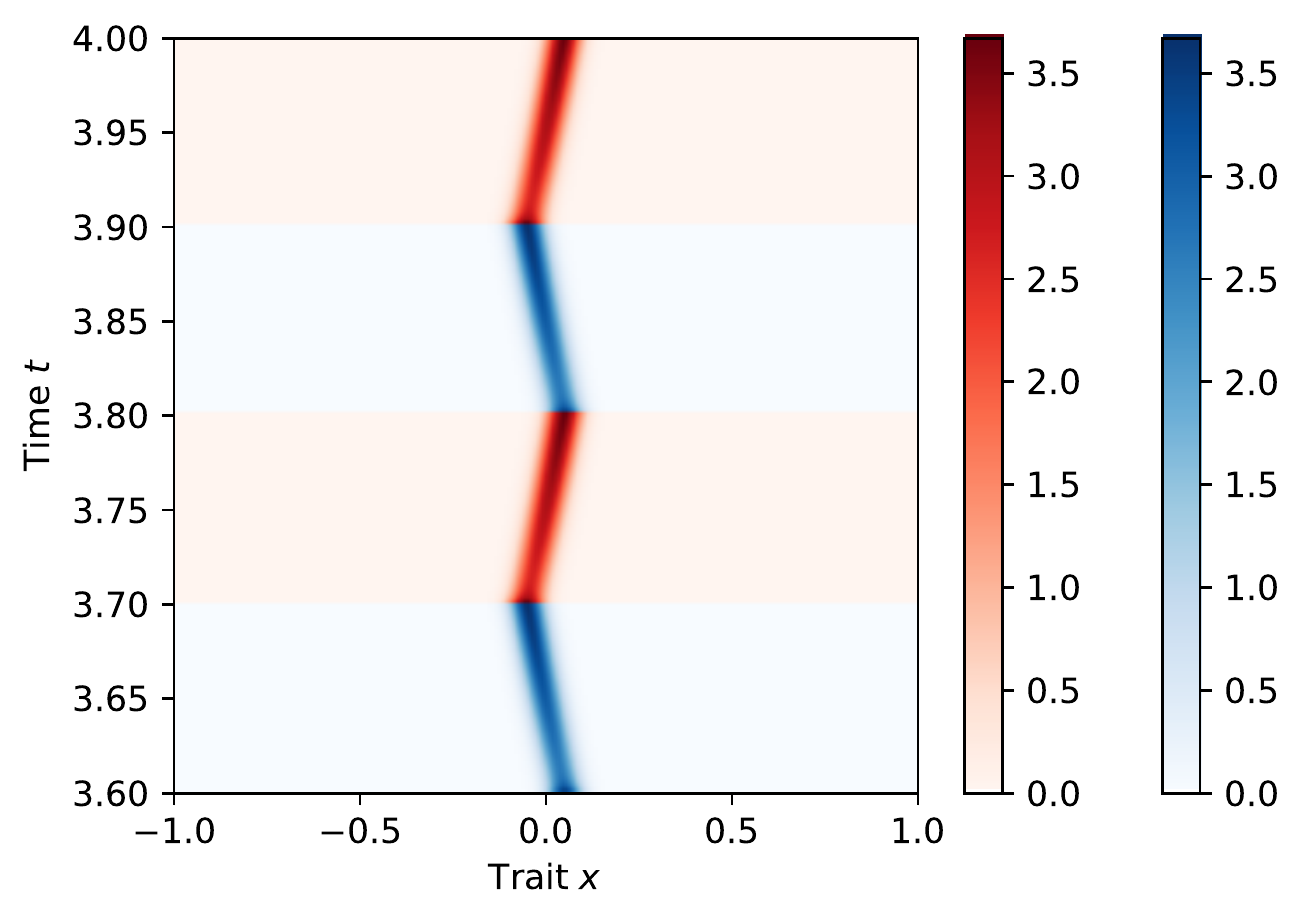}
      \label{sub5pers:sol}
                         }
    \subfloat[$\rhoeps(t)= \intRd \neps(t,x) \dx x$]{
      \includegraphics[width=0.3\textwidth]{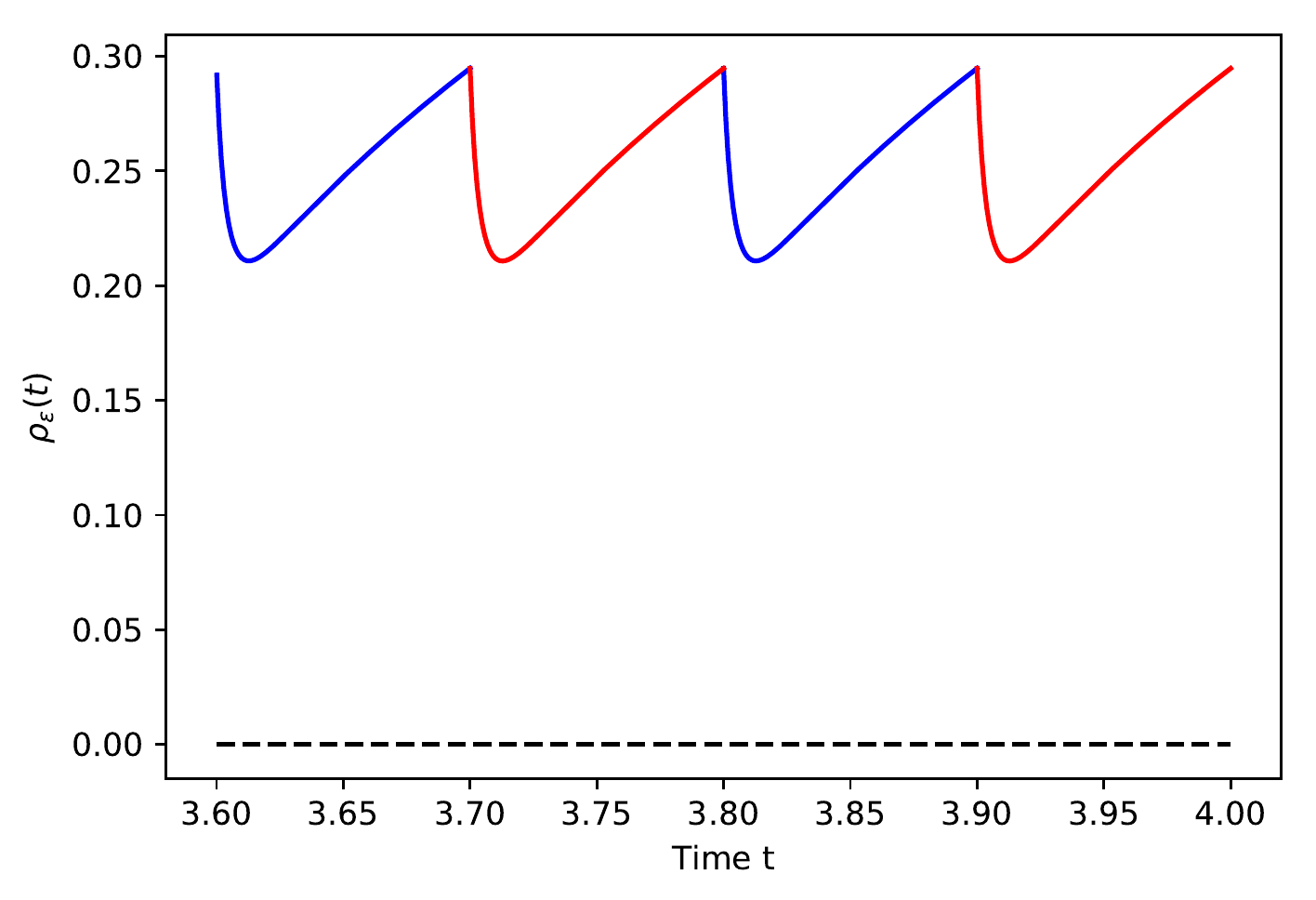}
      \label{sub5pers:Rho}
                         }
    \caption{Numerical simulations of \eqref{eq:neps_env} for a growth rate given by \eqref{eq:switch_ext_R}--\eqref{datafig4} and an initial condition given by \eqref{eq:switch_ext_CI}. (a): initial condition and the growth rates $R_i$. (b)-(c): situation when $T=1$. There is asymptotic extinction at the first switching time, since the population is concentrated at trait values with positive growth rates in the first environment and to negative growth rates in the second one. (d)-(e): periodic solution when $T=0.2$. The solution remains concentrated at trait values that correspond to positive growth rates in both environments, while the population size reaches a periodic time evolution. Parameters: $\dx x = \varepsilon = 10^{-3}$ ; $\dx t = 10^{-4}$.}
    \label{fig5ext}
  \end{center}
\end{figure}

\subsubsection*{Effect of the period of fluctuations on the mean population size}
We are now interested in highlighting how the period of fluctuations between two environments can have an effect on the mean population size of the population. To do so, we consider the growth rates given by \eqref{eq:switch_ext_R} with $\theta=1$, $r=1$ and $g=0.2$. This choice aims at considering environments where each respective optimal trait is also viable in the other environment and the asymptotic extinction of the population can not occur. The initial condition is given by \eqref{eq:switch_ext_CI} 
(see Figure \ref{sub6:CI_R}). A natural indicator for the mean population size during a period is given by 
\begin{equation*}
\overline{\rhoeps}(T) := \frac{1}{T} \int_0^T \rhoeps(t) \dx t\,.
\end{equation*}
We perfom numerical simulations of the solution of \eqref{eq:neps_env} for $T$ ranging in $[0.1,5]$. Figure \ref{sub6:rhoeps} shows several simulations of the time evolution of the size of the population for increasing values of the period $T$, when a stationary regime is attained. It is observed that the population size drop gets larger and larger for an increasing time spent in each environment. 
However, Figure \ref{sub6:sol} shows that the stationary mean population size over a period of fluctuations increases with the length of the period. 
This can be interpreted as follows. For a small period of fluctuations, the population never gets fully adapted to an environment, but does not suffer much either from the fluctuations. If the period of fluctuations is larger, this allows the population to concentrate on the optimal trait in each environment. As a consequence, the population size can drop significantly at switching time, which seems more costly. Understanding which situation is better from an evolutive point of vue is not intuitive and depends strongly on the shape of the growth rates. Our simulations show an example where larger periods of fluctuations are better from an evolutive point of vue, even if the population is less stable in the ecosystem.

\begin{figure}[!ht]
  \begin{center}
    \subfloat[$\neps^0(x)$, $R_{1}(x,0)$ and $R_{2}(x,0)$]{
\includegraphics[width=0.45\textwidth]{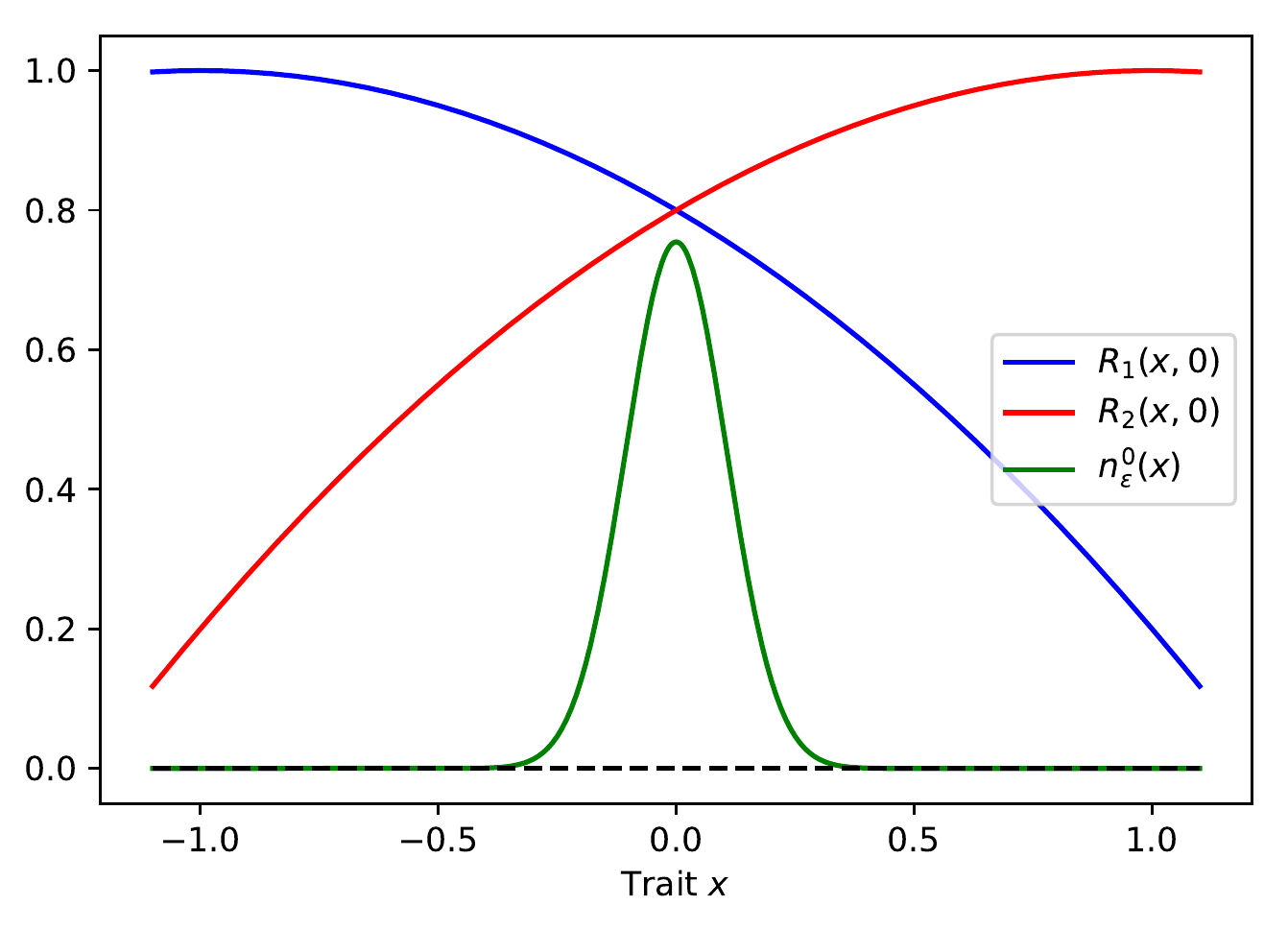} 
      \label{sub6:CI_R}
                         }\\
    \subfloat[$\rhoeps(t)$ for increasing $T$ from left to right]{
      \includegraphics[width=0.48\textwidth]{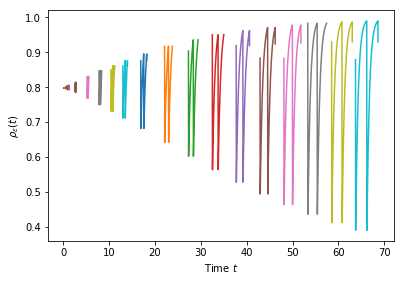}
      \label{sub6:rhoeps}
                         }
    \subfloat[$\overline{\rhoeps}(T)$]{
      \includegraphics[width=0.48\textwidth]{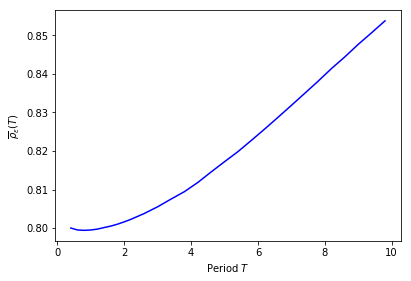}
      \label{sub6:sol}
                         }
    \caption{Numerical simulations of \eqref{eq:neps_env} for a growth rate given by \eqref{eq:switch_ext_R} with $\theta=1$, $r=1$ and $g=0.2$ and an initial condition given by \eqref{eq:switch_ext_CI}. (a): growth rates for each environment and initial condition. (b): evolution of $\rhoeps(t)$ over a period for different period values $T$. Drops in population size become larger when the period of fluctuations increases.   
    (c): stationary mean population size over a period of environmental fluctuations as a function of the duration $T$ of the period. The mean population size increases as the period of fluctuations gets larger. Parameters: $\dx x = \varepsilon = 10^{-3}$ ; $\dx t = 10^{-4}$.}
    \label{fig6pers}
  \end{center}
\end{figure}

\subsubsection*{Effect of the period of fluctuations on the concentration trait}
Finally, we illustrate in Figure \ref{fig7} the situation where the period of fluctuations affects the trait value at which the population concentrates. For that purpose, we consider an environmental switch between a concave and a symmetric bimodal shape. More precisely, consider 
\begin{equation}\label{eq:fig7_R}
\begin{aligned}
R_{1}(x, \Ieps(t)) =  0.7-\frac{1}{5}
x^2 - \Ieps(t)\quad \text{ and } \quad
R_2(x, \Ieps(t)) =  0.2-\frac{2}{3} x^4 + \frac{4}{5}x^2 - \Ieps(t)\,,
\end{aligned}
\end{equation}
with an initial condition given by
\begin{equation}\label{eq:fig7_CI}
\neps^0(x) =  \frac{ \rho_0 }{\sqrt{2 \pi \varepsilon}} e^{-\frac{(x-1)^2}{2\varepsilon}}\,,\quad \rho_0=0.25\,.
\end{equation}
 Both growth rates and the initial condition are shown in Figure \ref{sub7:CI_R}. Figures \ref{sub7:sol}-\ref{sub7:Rho} (resp. \ref{sub7split:sol}-\ref{sub7split:Rho}) show the evolution of the solution $\neps$ and the total population size $\rhoeps$ during two periods, when a stationary periodic solution is attained, for a short period (resp. large period). One can see that when the period of fluctuations is small, the population remains monomorphic and mostly adapted to the bimodal environment. When the period of fluctuations gets larger, the population has enough time to adapt to the unimodal environment. In this situation, the population becomes dimorphic in the bimodal environment. This may be an effect of the fact that $\varepsilon \neq 0$, so that very small mutations can have an effect on the population dynamics. Overall, these simulations show that when the environment switches between very different phenotypic landscapes, complex phenomena may appear and the trait at which the population concentrates may by hard to predict.

\begin{figure}[!ht]
  \begin{center}
    \subfloat[$\neps^0(x)$ and $R_{1}(x,0)$, $R_{2}(x,0)$]{
\includegraphics[width=0.32\textwidth]{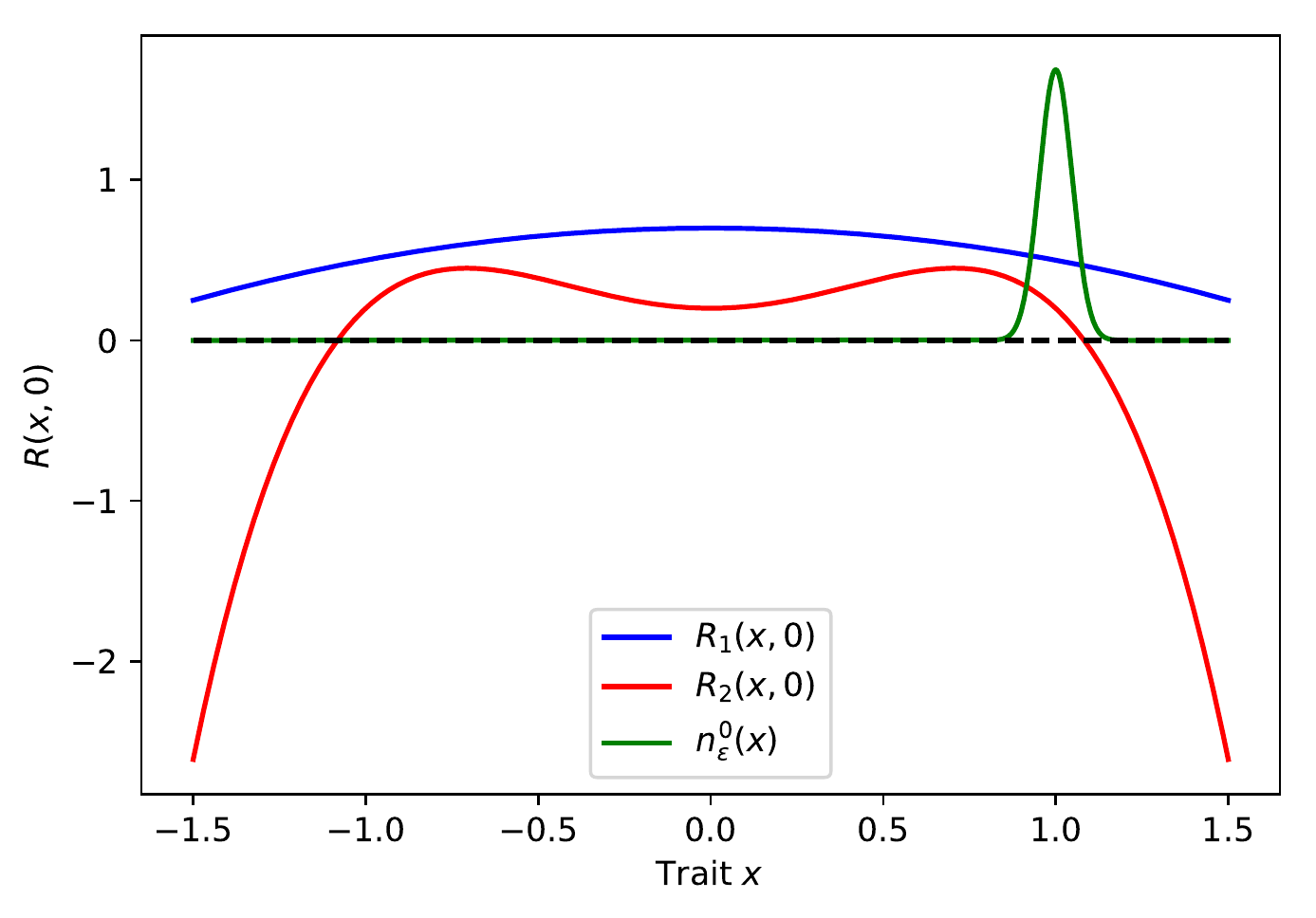} 
      \label{sub7:CI_R}
                         }
                      \rotatebox{90}{\hspace{2cm}$T=1$}
    \subfloat[$\neps(t,x)$ for $t$ large]{
      \includegraphics[width=0.3\textwidth]{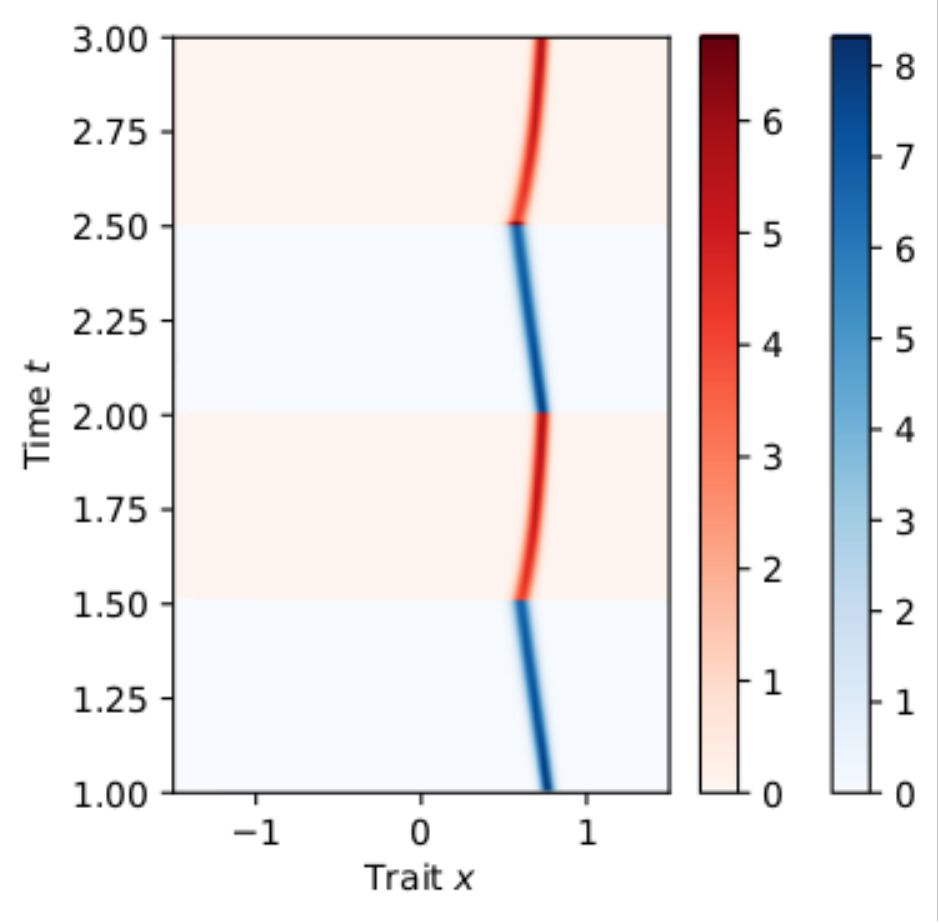}
      \label{sub7:sol}
                         }
    \subfloat[$\rhoeps(t)= \intRd \neps(t,x) \dx x$ for $t$ large]{
      \includegraphics[width=0.3\textwidth]{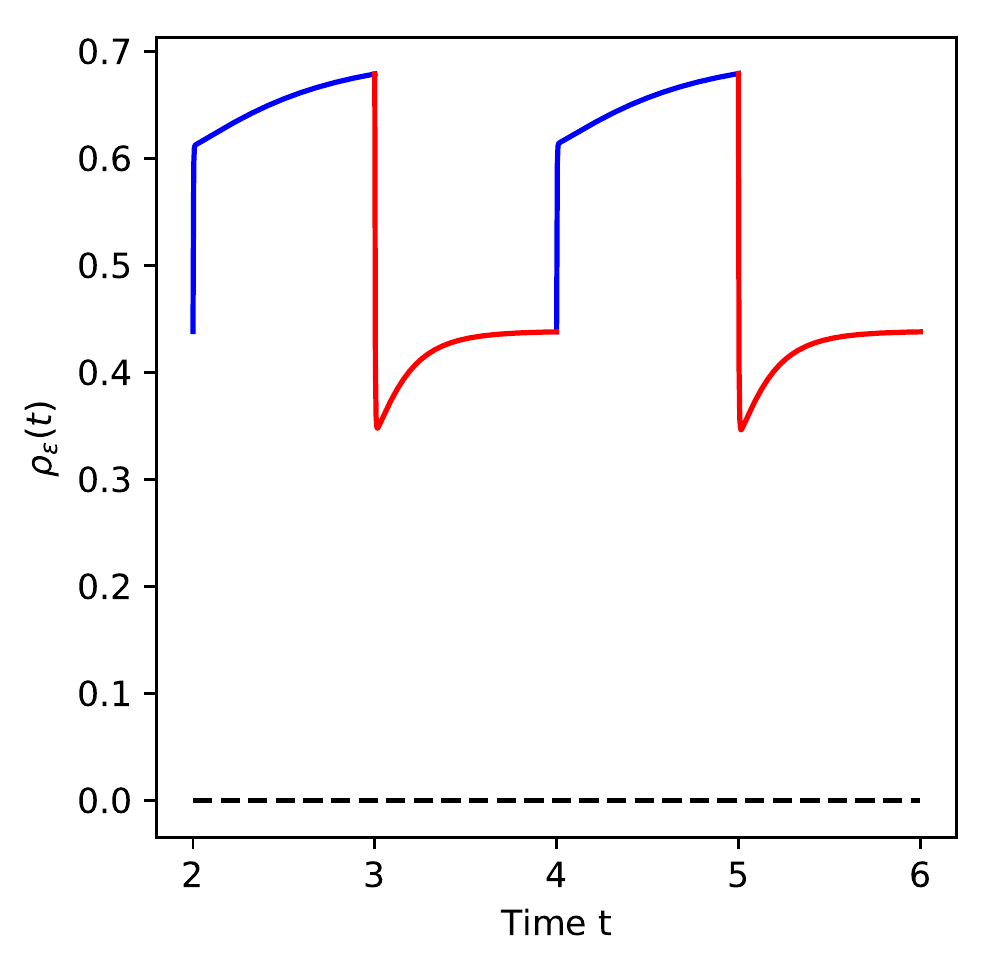}
      \label{sub7:Rho}
                         }\\
                          \rotatebox{90}{\hspace{2cm}$T=10$}
    \subfloat[$\neps(t,x)$ for $t$ small]{
      \includegraphics[width=0.31\textwidth]{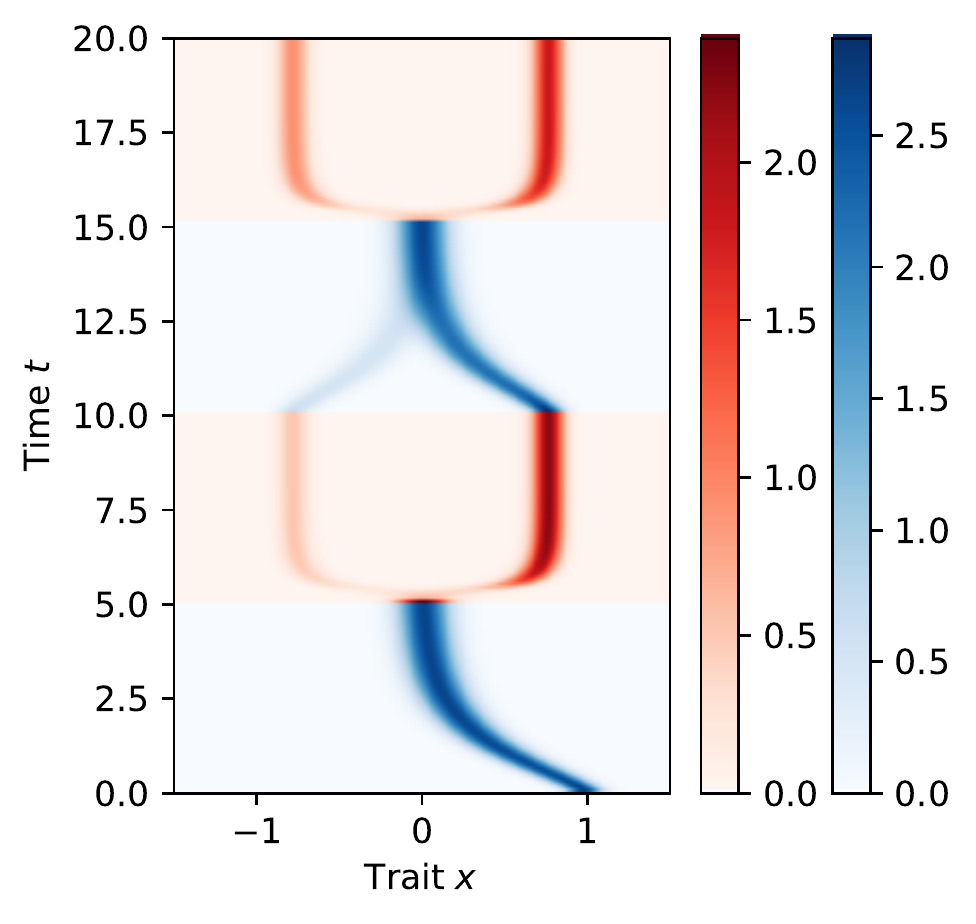}
      \label{sub7split:sol_deb}
                         }               
    \subfloat[$\neps(t,x)$ for $t$ large]{
      \includegraphics[width=0.31\textwidth]{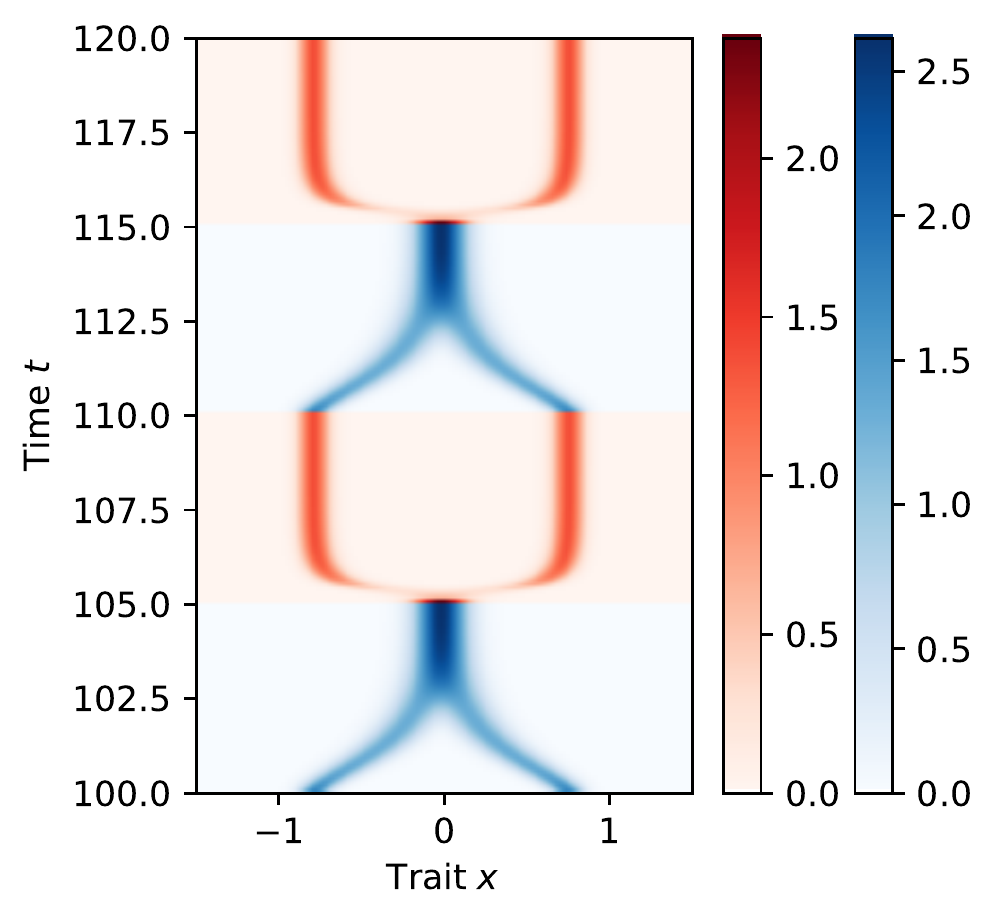}
      \label{sub7split:sol}
                         }
    \subfloat[$\rhoeps(t)= \intRd \neps(t,x) \dx x$ for $t$ large]{
      \includegraphics[width=0.3\textwidth]{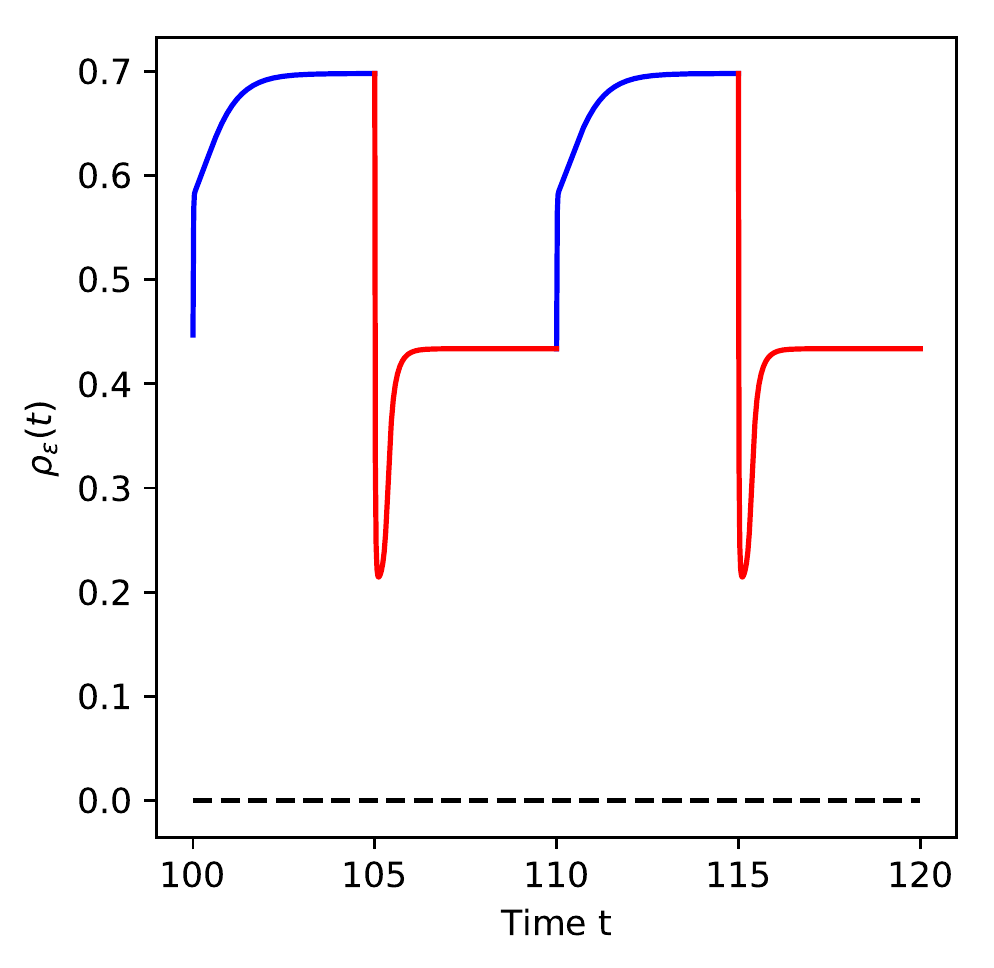}
      \label{sub7split:Rho}
                         }
    \caption{Numerical simulations of \eqref{eq:neps_env} for a growth rate given by \eqref{eq:fig7_R} and the initial condition given by \eqref{eq:fig7_CI}. (a): the initial conditions and the growth rates $R_i$. (b)-(c): long-time periodic solution and the total population size evolution when $T=1$: the switching frequency is high enough to maintain the trait in the same subpart of the phenotypic landscape. (d)-(e)-(f): long-time periodic solution when $T=10$. The switching frequency is low enough to allow for an exploration of the bimodal environment. The apparition of a dimorphic population may occur only for $\varepsilon \neq 0$. Parameters: $\dx x = \varepsilon = 10^{-3}$ ; $\dx t = 10^{-4}$. }
    \label{fig7}
  \end{center}
\end{figure}

\appendix

\section{Proof of Proposition \ref{prop:regularizing_effect}}\label{appendix:prop_regularizing}

\begin{proof} 
\begin{enumerate}[label=\bfseries(\roman*)]
\item \textbf{Upper bound on $\mathbf{(\ueps)_{\varepsilon}}$.} \\
For $(t,x) \in \R_+ \times \Rd$, let us define $\overline{u}(t,x)=-B_2|x|+A_2+ (B_2^2 + K_0)t$. Then by \eqref{hyp:n0}, we have that  $\forall \varepsilon>0 ,\, \forall x\in \Rd$, $\ueps^0(x) \leq \overline{u}(0,x)$, and for $\Ieps$ defined by \eqref{eq:Ieps} and a.e $(t,x) \in \R_+ \times \Rd$, 
\begin{displaymath}
\begin{aligned}
\partial_t \overline{u} - \varepsilon \Delta \overline{u} - |\nabla \overline{u} |^2 - R(x, \Ieps(t)) 
&\geq B_2^2 + K_0 + \varepsilon B_2\frac{d-1}{|x|} - B_2^2- K_0 \geq 0,
\end{aligned}
\end{displaymath}
 using \eqref{hyp:supR}. As a consequence, $\overline{u}$ is a supersolution to \eqref{eq:ueps}. Using a comparison principle in the class of $L^2$ functions, we obtain that for $(t,x)\in \R_+ \times \Rd$,
 \begin{equation*}
 \ueps(t,x) \leq -B_2|x|+A_2+ (B_2^2 + K_0)t\,.
 \end{equation*}
 
\item[] \textbf{Lower bound on $\mathbf{(\ueps)_{\varepsilon}}$.} \\ 
Denote $M_1=\max\left( \frac{\sqrt{K_3}}{2},
B_1\right)$ and define for $(t,x) \in [0,T]\times \Rd$, $\underline{\ueps}(t,x) = -A_1 - M_1 |x|^2 - (2d \varepsilon M_1 + K_2) t$. From \eqref{hyp:n0}, $\underline{\ueps}(0,x) \leq \ueps^0(x)$ on $\Rd$, and \eqref{hyp:supR} yields that 
\begin{displaymath}
\begin{aligned}
\partial_t \underline{\ueps} - | \nabla \underline{\ueps} |^2 - R(x,\Ieps) - \varepsilon \Delta \underline{\ueps} 
& \leq -K_2 - 4 M_1^2 |x|^2 +K_2+K_3  |x|^2  \leq 0,
\end{aligned}
\end{displaymath}
and the lower bound on $\ueps$ follows.  Moreover, the lower bound on $\ueps$ leads  to \eqref{bound-ve}.

\item[\textbf{(b) $\&$ (c)}] \textbf{Regularizing effect in time}
Finally, we show the local uniform continuity in time of $(\ueps)_{\varepsilon}$ on either $[t_0,T]\times B_{L/2}(0)$ or $[0,T]\times B_{L/2}(0)$, depending on the hypothesis on the initial condition. In the following, we work with the general notation $[t_i,T] \times B_L(0)$. \par

Let us show that $\forall \eta >0,\, \exists \theta >0$ such that $\forall (t,s,x) \in [t_i,T]^2 \times B_{L/2}(0)$ with $0\leq t-s \leq \theta$, for all $\varepsilon < \varepsilon_0$, we have that 
\begin{equation*}
|\ueps(t,x) - \ueps(s,x) | \leq 2 \eta.
\end{equation*}

We follow the proofs from \cite[Lemma 9.1]{Barles2002} and \cite[sec. 3.4]{Barles2009}. It consists in using the local uniform $L^\infty$ bounds on $(\ueps)_{\varepsilon}$ and the uniform continuity in space to obtain the uniform local time continuity. 
Take $(s,x) \in [t_i,T[\times B_{L/2}(0)$, and define for $(t,y) \in [s,T[ \times B_L(0)$ and any $\eta>0$,
\begin{displaymath}
\zeta(t,y) = \ueps(s,x) + \eta + A|x-y|^2 + B(t-s),
\end{displaymath}
with $A$ and $B$ constants to be defined. We show that for $A$ and $B$ large enough, $\zeta$ is a strict supersolution to \eqref{eq:ueps} on $[s,T]\times B_L(0)$, and $\zeta(t,y) > \ueps(t,y)$ on $\{s\}\times B(0,L) \cup [s,T]\times \partial B_L(0)$.
First, using point \textit{(a)}, $(\ueps)_{\varepsilon}$ is locally uniformly bounded, so that we can take $A$ such that $\forall \varepsilon < \varepsilon_0$, 
\begin{displaymath}
\frac{8 \parallel \ueps \parallel_{L^{\infty}([t_i,T]\times B_L(0))}}{L^2}\leq A\,.
\end{displaymath}
With this choice, for $(t,y) \in [s,T]\times \partial B_L(0)$ and any $\eta>0$, $B>0$, $\zeta(t,y) > \ueps(t,y)$.

Now, on $\{s\}\times B(0,L)$, we need to show that for $A$ large enough, $\zeta(s,y) > \ueps(s,y)$. Let us proceed by contradiction. If there exists $\eta>0,\, \forall A>0$, $\exists y_{A,\varepsilon} \in B(0,L)$ such that $\zeta(s,y_{A,\varepsilon}) \leq \ueps(s,y_{A,\varepsilon})$, or equivalently
\begin{equation}\label{ineq:reg_time_contradiction}
\ueps(s,y_{A,\varepsilon}) - \ueps(s,x) \geq  \eta + A|x-y_{A,\varepsilon}|^2,
\end{equation}
then we obtain that 
\begin{displaymath}
|x-y_{A,\varepsilon}|\leq \sqrt{\frac{\ueps(s,y_{A,\varepsilon}) - \ueps(s,x) - \eta}{A}} \leq \sqrt{\frac{2 M}{A}}
\end{displaymath}
with $M$ a uniform upper bound on $\parallel \ueps \parallel_{L^{\infty}([t_i,T]\times B(0,L))}$. As a consequence, for all $\varepsilon >0$, $\lim_{A \rightarrow \infty} |x-y_{A,\varepsilon}|=0$. 
Since $(\ueps)_{\varepsilon}$ is uniformly continuous in space on $B(0,L)$, there exists $h>0$ such that for all $\varepsilon >0$, if $|x-y_{A,\varepsilon}|\leq h$, then $|\ueps(s,x)-\ueps(s,y_{A,\varepsilon})| < \frac{\eta}{2}$. This contradicts \eqref{ineq:reg_time_contradiction}, and we deduce that $\zeta(s,y) > \ueps(s,y)$ on $B(0,L)$. 
Finally, we have that in $[s,T]\times B(0,L)$, for $B$ large enough and $C \geq \sup_{I_m\leq \Ieps \leq 2 I_M} \parallel R(y, \Ieps(t))\parallel_{L^{\infty}([s,T]\times B(0,L))}$, 
\begin{displaymath}
\begin{aligned}
\partial_t \zeta(t,y) - \varepsilon  \Delta \zeta(t,y) - |\nabla \zeta(t,y)|^2 - R(y,\Ieps(t)) 
&\geq B - 2A d\varepsilon - 9 A^2 L^2 - C \geq 0 
\end{aligned}
\end{displaymath}
and $\zeta$ is a supersolution of \eqref{eq:ueps}. Now since $\ueps$ is a solution of \eqref{eq:ueps}, we deduce that 
for all $(t,y) \in [s,T]\times B(0,L)$,
\begin{equation*}
\ueps(t,y) \leq \zeta(t,y) = \ueps(s,x) + \eta + A[x-y|^2 + B(t-s).
\end{equation*}
We can prove similarly that, up to changing $A$ and $B$, $\ueps(t,y) - \ueps(s,x) \geq -\eta - A |x-y|^2 - B(t-s)$. We conclude by taking $x=y$ and $\theta < \frac{\eta}{B}$ in both inequalities.

\end{enumerate}

\end{proof}

\section{Proof of Theorem \ref{theo:CV_ueps} (i)-(ii)}\label{app:cv_u}
We use now the regularity properties obtained in Proposition \ref{prop:regularizing_effect} to prove the convergence of $(\ueps)_{\varepsilon}$ and of $(\neps)_{\varepsilon}$ .

\subsection*{Convergence of $(\ueps)_{\varepsilon}$}
From Proposition \ref{prop:regularizing_effect}, we know that $(\ueps)_{\varepsilon}$ is locally uniformly bounded and continuous. We use the ArzelÃ -Ascoli theorem to deduce that up to a subsequence, $(\ueps)_{\varepsilon}$ converges locally uniformly to a continuous function $u$ in $(0,T)\times \Rd$. If moreover $(\nabla \ueps^0)_{\varepsilon}$ is locally uniformly bounded, then $(\ueps)_{\varepsilon}$ is locally uniformly bounded and continuous on $[0,T]\times \Rd$, and the ArzelÃ -Ascoli theorem applied near $t=0$ shows that $u\in \mathcal{C}([0,\infty)\times \Rd)$. In particular, $u(0,x) = \lim_{\varepsilon \rightarrow 0} \ueps(0,x) = u^0(x)$.

\subsection*{Proof of $u\leq 0$}
Assume that for some $(t,x)$, there exists $b$ such that $0<b\leq u(t,x)$. Then, by continuity of $u$ and by locally uniform convergence of $(u_\eps)$, there exists $r>0$ and $\eps_0>0$ such that $\forall (t,y) \in B(x,r)$ and $\eps\leq \eps_0$, $u_\eps(t,y) \geq \frac{b}{2}$. As a consequence, on $B(x,r)$, $\neps(t,y) \rightarrow + \infty $ when $\varepsilon \rightarrow 0$, which contradicts the upper bound \eqref{estimate_Ieps} on $\Ieps$.

\subsection*{Proof of the convergence of $(n_{\varepsilon})_{\varepsilon}$}
We know from Proposition \ref{prop:estimates_ieps} that $n_\eps$ is uniformly bounded in $L^\infty_tL^1_x(\R_+\times\Rd)$. As a consequence, $(\neps)_{\varepsilon}$ converges in $\mathrm{L^\infty}\left(\mathrm{w*} (0,\infty) ; \mathcal{M}^1(\R^d) \right)$ to a measure $n$.
%

\subsection*{Proof of $\text{supp }(n(t,\cdot)) \subseteq \{u(t,\cdot)=0 \}$}
Assume that there exists $x^*\in \text{supp }n(t^*,\cdot)$ such that $u(t^*,x^*)<0$. Then, since $(\ueps)_{\varepsilon}$ is uniformly continuous on a neighborhood of $(t^*,x^*)$, we obtain that for $\varepsilon$ small enough, there exists $a,\,\delta >0$ such that on $V_{\delta}:=(t^*-\delta,t^*+\delta)\times B(x^*,\delta)$, we have that $\ueps(t,x) \leq -\frac{a}{2}<0$. We deduce that 
\begin{displaymath}
\int_{V_\delta} n \dx t \dx x = \int_{V_\delta} \lim_{\varepsilon \rightarrow 0} e^{\frac{\ueps(t,x)}{\varepsilon}} \dx t \dx x = 0\,,
\end{displaymath}
which is a contradiction. Therefore, for almost every $t$, $supp(n(t,\cdot)) \subset \{u(t,\cdot)=0 \}$.

\section{Proof of Theorem \ref{theo:CV_ueps} (iii)}
In this section, we identify $u = \lim_{\varepsilon \rightarrow 0} \ueps$, assuming that $(\Ieps)_{\varepsilon}$ converges to a function $I$. 

\subsection*{Identification of the Hamilton-Jacobi equation \eqref{eq:HJ_theo}}
We define $\Phi_{\varepsilon}(t,x) := \ueps(t,x) - \int_0^t \nabla R(x,\Ieps(s)) \dx s$. From \eqref{eq:ueps}, we deduce that
\begin{equation*}
\begin{split}
\partial_t \phieps(t,x) - \varepsilon \Delta \phieps(t,x) - | \nabla \phieps(t,x)|^2 - 2 \nabla \phieps(t,x) \cdot \int_0^t \nabla R(x, \Ieps(s))\dx s \\
= \varepsilon \int_0^t \Delta R(x,\Ieps(s)) \dx s + \left| \int_0^t \nabla R(x,\Ieps(s)) \dx s \right|^2\,.
\end{split}
\end{equation*}
Our goal is to pass to the limit $\varepsilon \rightarrow 0$. Since $I\mapsto R(x,I)$ is smooth, we obtain the locally uniform limits on $[0,T]$:
\begin{equation*}
\begin{aligned}
\underset{\varepsilon \rightarrow 0}{\lim} \int_0^t R(x, \Ieps(s)) \dx s &= \int_0^t R(x,I(s)) \dx s \,,\\
\underset{\varepsilon \rightarrow 0}{\lim} \int_0^t \nabla R(x, \Ieps(s)) \dx s &= \int_0^t \nabla R(x,I(s)) \dx s \,,\\
\underset{\varepsilon \rightarrow 0}{\lim} \int_0^t \Delta R(x, \Ieps(s)) \dx s &= \int_0^t \Delta R(x,I(s)) \dx s \,.
\end{aligned}
\end{equation*}
These limiting functions are continuous. Moreover, since, $(\ueps)_{\varepsilon}$ converges locally uniformly to the continuous function $u$ as $\varepsilon$ goes to zero, it follows that $(\phieps)_{\varepsilon}$ converges locally uniformly to $\Phi$ with $\Phi(t,x) = u(t,x) - \int_0^t R(x,I(s)) \dx s$ as $\varepsilon$ goes to $0$, and this function is continuous. Next, let us show that $\Phi$ is a viscosity solution of
\begin{equation}\label{eq:viscosity_phi}
\partial_t \Phi(t,x) - | \nabla \Phi(t,x)|^2 - 2 \nabla \Phi(t,x) \cdot \int_0^t \nabla R(x,I(s)) \dx s = \left| \int_0^t \nabla R(x,I(s)) \dx s \right|^2\,.
\end{equation}
Then, it will be straightforward that $u$ is a viscosity solution of 
\begin{equation}\label{eq:viscosity_u}
\partial_t u(t,x) = | \nabla u(t,x)|^2 + R(x,I(t)) \,,
\end{equation}
and the proof will be complete. To show this result, take $\psi \in \mathcal{C}^{\infty}((0,T)\times \Rd)$, and suppose that $\Phi-\psi$ has a strict local maximum at a point $(t_1,x_1) \in (0,T)  \times \Rd$. Now, since $\nabla \phieps(t_{\varepsilon_j} x_{\varepsilon_j}) = \nabla {\psi}(t_{\varepsilon_j} x_{\varepsilon_j})$, $\partial_t \phieps(t_{\varepsilon_j} x_{\varepsilon_j}) = \partial_t {\psi}(t_{\varepsilon_j} x_{\varepsilon_j})$, and $-\Delta \phieps(t_{\varepsilon_j} x_{\varepsilon_j}) \geq - \Delta \psi(t_{\varepsilon_j} x_{\varepsilon_j})$, we deduce that 
\begin{equation*}
\begin{aligned}
\partial_t {\psi}(t_{\varepsilon_j} x_{\varepsilon_j}) - | \nabla {\psi}(t_{\varepsilon_j} x_{\varepsilon_j})|^2 - 2 \nabla {\psi}(t_{\varepsilon_j} x_{\varepsilon_j}) \cdot \int_0^{t_{\varepsilon_j}} \nabla R(x,I(s)) \dx s - \left| \int_0^{t_{\varepsilon_j}} \nabla R(x,I(s)) \dx s \right|^2 \\
= \partial_t \phieps(t_{\varepsilon_j} x_{\varepsilon_j}) - | \nabla \phieps(t_{\varepsilon_j} x_{\varepsilon_j})|^2 - 2 \nabla \phieps(t_{\varepsilon_j} x_{\varepsilon_j}) \cdot \int_0^{t_{\varepsilon_j}} \nabla R(x,I(s)) \dx s - \left| \int_0^{t_{\varepsilon_j}} \nabla R(x,I(s)) \dx s \right|^2\,,\\
=\varepsilon_j \Delta \phieps  + \varepsilon_j \int_0^t \Delta R(x, I(s)) \dx s\,,\\
\leq \varepsilon_j \Delta \psi  + \varepsilon_j \int_0^t \Delta R(x,I(s)) \dx s\,.
\end{aligned}
\end{equation*}
For $\varepsilon_j \rightarrow 0$, and since ${\psi}$ it smooth, it leads to 
\begin{equation*}
\partial_t {\psi}(t_1,x_1) - | \nabla {\psi}(t_1,x_1)|^2 - 2 \nabla {\psi}(t_1,x_1) \cdot \int_0^{t_1} \nabla R(x,I(s)) \dx s - \left| \int_0^{t_1} \nabla R(x,I(s)) \dx s \right|^2 \leq 0\,.
\end{equation*}
The case of a local minimum can be treated similarly. Finally, we have proved that $\Phi$ is a viscosity solution of \eqref{eq:viscosity_phi}. It follows that $u$ is a viscosity solution of \eqref{eq:viscosity_u}.

\subsection*{Constraint when $I$ is strictly positively lower bounded on $(0,T)$}
We assume that there exists $\underline{I}>0$ such that for $t\in (0,T)$, $I(t) \geq \underline{I}$. In that case, we show that 
\begin{displaymath}
\max_{x\in \Rd} u(t,x) =0\,\forall t \in (0,T)\,.
\end{displaymath}
Using the upper bound in \eqref{eq:bounds_ueps}, there exist positive constants such that 
\begin{displaymath}
\ueps(t,x) \leq -A|x| + B + Ct\,.
\end{displaymath}
Therefore, for $M$ large enough, we obtain that 
\begin{displaymath}
\lim_{\varepsilon \rightarrow 0} \int_{|x|>M} \neps(t,x) \dx x \leq \lim_{\varepsilon \rightarrow 0} \int_{|x|>M} e^{\frac{-A|x| + B + Ct}{\varepsilon}} \dx x =0 \,,
\end{displaymath}
so that we can write 
\begin{equation}\label{eq:contradiction_max_u}
\lim_{\varepsilon \rightarrow 0} \int_{|x|\leq M} \neps(t,x) \dx x \geq \frac{\underline{I}}{\psi_M}\,.
\end{equation}
Now, if for all $|x|\leq M$ $u$ is strictly negative, there exists a positive constant $a$ such that $u(t,x) <-a$ for all $|x|\leq M$. By the locally uniform convergence of $(u_\eps)_{\eps}$ to $u$ we then deduce that $\lim_{\varepsilon \rightarrow 0} \int_{|x|\leq M} \neps(t,x) \dx x=0$, which contradicts \eqref{eq:contradiction_max_u}, and the result follows.

\subsection*{Proof of $\Gamma_t \subseteq \{x\in\Rd,\, R(x,I(t))=0\}$}
Take $t$ any continuity point of $I$, and $\overline{x}$ such that $u(t,\overline x)=0$. Using the definition of a viscosity solution at this point with the null function as a test function, we obtain that 
\begin{displaymath}
R(\overline{x},I(t))\geq 0\,.
\end{displaymath} 
Next for the other inequality, integrate Equation \eqref{eq:HJ_theo} in time on $(t,t+s)$ with $s>0$ small enough, at the fixed point $\overline{x}$, and divide by $s$. We obtain that
\begin{displaymath}
0\geq \frac{ u(t+s,\overline{x})}{s} \geq \frac{u(t+s,\overline{x}) - u(t,\overline{x})}{s} \geq \frac{1}{s} \int_0^s R(\overline{x},I(t+u)) \dx u \,,
\end{displaymath}
and for $s \rightarrow 0^+$, since $I$ is continuous in $t$, we have that 
\begin{displaymath}
0 \geq  R(\overline{x},I(t)) \,,
\end{displaymath}
and that concludes the proof of Theorem \ref{theo:CV_ueps}.

\section*{Acknowledgements}
M.C. and S.M. are grateful for partial funding from the chaire Mod\'elisation Math\'ematique et Biodiversit\'e of V\'eolia Environment - \'Ecole Polytechnique - Museum National d'Histoire Naturelle - Fondation X.
S.M. is also grateful for partial funding from the
 European Research Council (ERC) under the European Union's Horizon 2020 research and innovation programme (grant agreement No 639638), held by Vincent Calvez.

%

\end{document}